\documentclass[12pt]{article}
\usepackage{latexsym,amsmath,amssymb}
\usepackage[dvipsnames,usenames]{color}

     \addtolength{\textwidth}{2cm} \addtolength{\hoffset}{-1cm}
     \addtolength{\textheight}{3cm} \addtolength{\voffset}{-1.5cm}
\newif\ifdviwin

\usepackage[latin1]{inputenc}
\usepackage[english]{babel}
\usepackage{indentfirst}
\usepackage[mathscr]{eucal}
\usepackage{amssymb,amsmath,amsfonts}
\usepackage{fancybox,fancyhdr}
\usepackage{graphicx}

\newif\ifdviwin

\dviwintrue

\def\cS{\mathcal{S}}

\let\8=\infty \let\0=\emptyset

\let\tilde=\widetilde

\let\landa=\lambda
\let\alfa=\alpha

\let\parc=\partial

\def\landa{\lambda}

\def\flecha{\rightarrow}
\def\esiz{\langle}
\def\esde{\rangle}

\def\cte.{\mathop{\rm cte.}\nolimits}

\def\det{\mathop{\rm det}\nolimits}

\def\cosh{\mathop{\rm cosh }\nolimits}

\def\LO{\mathbb{L}}

\def\R{\mathbb{R}}
\def\Z{\mathbb{Z}}

\def\H{\mathbb{H}}
\def\S{\mathbb{S}}

 \newtheorem{defi}{Definition}
 \newtheorem{teo}[defi]{Theorem}
 \newtheorem{pro}[defi]{Proposition}
 
 \newtheorem{lem}[defi]{Lemma}
 
 \newtheorem{remark}[defi]{Remark}

 \newenvironment{proof}{\rm \trivlist \item[\hskip \labelsep{\it
      Proof}:]}{\par\nopagebreak \hfill $\Box$ \endtrivlist}

\numberwithin{equation}{section}

\begin{document}
\mbox{}\vspace{0.4cm}\mbox{}

\begin{center}
\rule{15.2cm}{1.5pt}\vspace{0.5cm}

{\Large \bf The space of Lorentzian flat tori
\\[0.3cm] in anti-de Sitter $3$-space}\\ \vspace{0.5cm} {\large María A. León-Guzmán
$\mbox{}^a$, Pablo Mira$\mbox{}^b$ and José A. Pastor$\mbox{}^a$}\\
\vspace{0.3cm} \rule{15.2cm}{1.5pt}
\end{center}
  \vspace{1cm}

\noindent $\mbox{}^a$ Departamento de Matem\'{a}ticas, Universidad
de Murcia, Spain \\ e-mails: maleong@um.es, josepastor@um.es
\vspace{.1cm}

\noindent $\mbox{}^b$ Departamento de Matem\'{a}tica Aplicada y
Estadística,
Universidad Polit\'{e}cnica de Cartagena, E-30203 Cartagena, Murcia, Spain. \\
e-mail: pablo.mira@upct.es \vspace{0.2cm}

\noindent Keywords: timelike flat surfaces, Lorentzian flat tori,
isometric immersions, anti de-Sitter space \\ AMS Subject
Classification: 53C42, 53C50

\vspace{0.3cm}

 \begin{abstract}
We describe the space of isometric immersions from the Lorentz
plane $\LO^2$ into the anti-de Sitter $3$-space $\H_1^3$, and solve
several open problems of this context raised by M. Dajczer and K.
Nomizu in 1981. We also obtain from the above result a description
of the space of Lorentzian flat tori isometrically immersed in
$\H_1^3$ in terms of pairs of closed curves with wave front
singularities in the hyperbolic plane $\H^2$ satisfying some
compatibility conditions.
 \end{abstract}

\section{Introduction}

A classical problem in Lorentzian geometry is the description of the
isometric immersions between Lorentzian spaces of constant
curvature. In this paper we investigate the specific problem of
classifying the isometric immersion from the Lorentz plane $\LO^2$
into the $3$-dimensional anti-de Sitter space $\H_1^3$.

The study of isometric immersions from $\LO^2$ into $\H_1^3$ starts
from a pioneer work by M. Dajczer and K. Nomizu \cite{Dajczer} in 1981.
There, these authors gave a local description of such surfaces in
terms of the Lie group structure of $\H_1^3$, using a classical idea
by L. Bianchi \cite{Bia} to describe the flat surfaces of the Riemannian unit
sphere $\S^3$. Nevertheless, the global problem of finding all
isometric immersions of $\LO^2$ into $\H_1^3$ turned out to be more
subtle than its Euclidean counterpart, and remained open in that
paper. Moreover, Dajczer and Nomizu proposed in \cite{Dajczer} several
specific open problems on the structure of the space of such
isometric immersions from $\LO^2$ into $\H_1^3$, that still remain
unanswered.

In this paper we provide a general description of all isometric
immersions of $\LO^2$ into $\H_1^3$ in terms of pairs of curves with
singularities (\emph{wave fronts}) in the hyperbolic plane $\H^2$.
In particular, we give an answer to the open problems proposed in
\cite{Dajczer}. In order to do so, we adapt to the Lorentzian setting
an important idea by Y. Kitagawa \cite{Kit1} used to describe
complete flat surfaces in $\S^3$ via the Hopf fibration. The main
difficulty in such an adaptation is that, in the Lorentzian case,
the asymptotic curves of a timelike flat surface have varying causal
character. This is a substantial complication in proving that an
isometric immersion of $\LO^2$ into $\H_1^3$ can be globally
parametrized by asymptotic curves, which is the key idea of the
Riemannian case.

An important fact in the context we are working is that, among all
isometric immersions of $\LO^2$ into $\H_1^3$, some of them are
actually universal coverings of immersed (and sometimes embedded)
Lorentzian tori in $\H_1^3$. The basic examples in this sense are
the \emph{Hopf tori} constructed in \cite{amarillo2,amarillo} by means of the
Hopf fibration of $\H_1^3$ over $\H^2$.

The existence of Lorentzian flat tori in $\H_1^3$ is a very
remarkable fact since, in the Lorentzian context, there are very
severe restrictions for the existence of compact immersed Lorentzian
surfaces in an ambient Lorentzian $3$-manifold. Indeed:

\begin{enumerate}
\item
Even intrinsically, any compact surface that admits a Lorentzian metric must be
homeomorphic to a torus (by the Poincaré-Hopf index theorem, see for
instance \cite{ONe}).
 \item
If in a Lorentzian $3$-manifold there exists an immersed compact
Lorentzian surface, then such a $3$-manifold cannot be
\emph{chronological} (i.e. it has to admit closed timelike
curves). The reason is that any compact Lorentzian surface must have closed timelike curves, see \cite[Theorem 3.6]{MiSa}. In particular, there are no compact Lorentzian surfaces
in the universal covering of $\H_1^3$ (which is the unique complete simply-connected
Lorentzian $3$-manifold of constant curvature $-1$).
\end{enumerate}

These results show that, in fact, the case of Lorentzian flat tori
in $\H_1^3$ can be seen as one of the most geometrically simple situations
in which compact Lorentzian surfaces exist inside a Lorentzian
$3$-manifold.

Our second main objective here is to describe the space of
Lorentzian flat tori in $\H_1^3$, as an application of our previous
description of all isometric immersions of $\LO^2$ into $\H_1^3$. In
particular, we prove that any such torus can be recovered in terms
of two closed curves in $\H^2$, one of them regular and the other
one possibly having wave-front singularities. This result can be
seen as an extension to the Lorentzian setting of Kitagawa's
classification of (Riemannian) flat tori of the unit sphere $\S^3$
\cite{Kit1}, although there are several technical differences in the
proof and the final classification theorem. For results about complete flat surfaces in $\S^3$, we may refer the reader to \cite{Kit1,Kit2,Wei,GaMi,AGM2,DaSh} and references therein.

We have organized this paper as follows. In Section 2 we give some
preliminaries on the geometry of $\H_1^3$ as a Lie group by means of a pseudo-quaternionic structure, and we
introduce the different Hopf fibrations existing on $\H_1^3$. In
Section 3 we prove that any isometric immersion of $\LO^2$ into
$\H_1^3$ admits a global parametrization by asymptotic curves. The
resulting coordinates are not \emph{Tchebysheff coordinates} in the
Euclidean sense, since the asymptotic curves in this Lorentzian
context cannot be parametrized by arc-length (indeed, they have
varying causal character). This detail is one of the main sources of
complication of the paper.

In Section 4 we improve the classical Dajczer-Nomizu theorem in
\cite{Dajczer} on the construction of timelike flat surfaces in
$\H_1^3$ as a product of two curves. More specifically, we use the
asymptotic coordinates constructed in Section 3 to prove that every
isometric immersion of $\LO^2$ into $\H_1^3$ can be recovered as the
pseudo-quaternionic product of two regular curves in $\H_1^3$, both in general with
varying causal character, that verify some compatibility conditions.

In Section 5 we show a general method to construct regular curves in
$\H_1^3$ that verify the hypotheses required by the classification
theorem of Section 4. This method is an extension to the Lorentzian
setting of Kitagawa's theory for studying complete flat surfaces in
$\S^3$. Here, we use the Hopf fibration of $\H_1^3$ over $\H^2$ and
we prove that such regular curves in $\H_1^3$ can be obtained as
\emph{asymptotic lifts} of curves with wave-front singularities in
$\H^2$. With this, we obtain our main result (Theorem \ref{clas1}), which
parametrizes the space of isometric immersions of $\LO^2$ into
$\H_1^3$ in terms of the space of curves with wave-front
singularities in $\H^2$.

Also in Section 5, we apply an idea by Kitagawa \cite{Kit1} and
Dadok-Sha \cite{DaSh} to prove that all Lorentzian flat tori of
$\H_1^3$ are exactly obtained when in Theorem \ref{clas1} one starts with
closed curves in $\H^2$, possibly with wave-front singularities, but
with a well defined unit normal at every point. Again, this provides
a parametrization of the space of Lorentzian flat tori in $\H_1^3$. We conclude this section analyzing in detail the example of Lorentzian Hopf cylinders and Lorentzian Hopf tori.

Finally, in Section 6 we give an answer to the Dajczer-Nomizu open
questions regarding the construction of isometric immersions of
$\LO^2$ into $\H_1^3$.

This work is part of the PhD Thesis of the first author.

\section{The geometry of $\H_1^3$}

Let $\R^4_2$ be the vector space $\R^4$ endowed with the semi-Riemannian metric
    \begin{equation*}
    \langle\hspace{6pt},\hspace{3pt}\rangle = - dx_0^2 - dx_1^2 + dx_2^2 +
    dx_3^2.
    \end{equation*}
The hypersurface $\mathbb{H}^3_1 = \{ x \in \R^4_2 : \langle x,
x\rangle = -1\}$ is then a model for the anti-de Sitter space of
dimension $3$. In this way, the induced metric on $\H^3_1$ is a Lorentzian metric of constant curvature $-1$. The space $\H^3_1$ is topologically a cylinder. Moreover, it is an $\S^1$-fibration over the hyperbolic plane $\H^2$ with timelike fibers and its universal covering $\widetilde{\H^3_1}$ is the unique Lorentzian space-form of constant curvature $-1$.

 Following the construction of \cite{amarillo}, we
will identify $\R^4_2$ with a certain set of maps
$\R^4_2\longrightarrow\R^4_2$, and $\mathbb{H}^3_1$ with a subset
of it. The composition induces a natural product structure on
$\R^4_2$ and $\mathbb{H}^3_1$, that will be seen then as Lie
groups.

Let consider $1 = {\rm Id}_{\R^4_2},$ and $i, j,
k:\R^4_2\longrightarrow\R^4_2$ given by:
        \begin{equation*}
        \begin{array}{rcl}
        i(x_0, x_1, x_2, x_3) &=& (x_1, -x_0,x_3,-x_2),\\
        j(x_0, x_1, x_2, x_3) &=& (x_2, -x_3, x_0, -x_1),\\
        k(x_0, x_1, x_2, x_3) &=& (x_3, x_2, x_1, x_0).
        \end{array}
        \end{equation*}
These maps verify:
    \begin{equation*}
    \begin{array}{rcccl}
    i^2 &=& i\circ i &=& -1,\\
    ij &=& j\circ i &=& k,\\
    ik &=& k\circ i &=& -j,
    \end{array}\quad
    \begin{array}{rcccl}
    ji &=& i\circ j &=& -k,\\
    j^2 &=& j\circ j &=& 1,\\
    jk &=& k\circ j &=& - i,
    \end{array}\quad
    \begin{array}{rcccl}
    ki &=& i\circ k &=& j,\\
    kj &=& j\circ k &=& i,\\
    k^2 &=& k\circ k &=& 1.
    \end{array}
    \end{equation*}
Note that we are using the letters $i$, $j$, $k$, as it is
usual for quaternions, but here the product structure is a
different one.

We consider now the vector space $\mathcal{F} = \text{span}\{1, i,
j, k\}$, and the isomorphism $\varphi: \mathcal{F}\longrightarrow
\R^4_2$ defined by $$\varphi(1) = \frac{\parc}{\parc x_0},
\hspace{0.3cm} \varphi(i) = \frac{\parc}{\parc x_1},
\hspace{0.3cm} \varphi(j) = \frac{\parc}{\parc x_2} \hspace{0.3cm}
\text{ and } \varphi(k) = \frac{\parc}{\parc x_3}.$$
In this way, $\R^4_2$ can be identified with the Lie group $\mathcal{F} = \{a + bi + cj +
dk:a,b,c,d\in\R\}$ endowed with the semi-Riemannian metric $\varphi^*\left(\langle\hspace{6pt},\hspace{3pt}\rangle\right)$ and we
will denote its metric simply by
$\langle\hspace{6pt},\hspace{3pt}\rangle$.

For $z = a + bi + cj + dk$ we use the notation ${\mathrm
Re}(z) = a$. We say that $z$ is \emph{real} (resp. \emph{pure
imaginary}) if $b=c=d=0$ (resp. $a=0$). Finally, we define the
\emph{conjugate} of $z$ as $\overline{z} = a - bi - cj - dk$.
It is easy to check that, given $z_1, z_2\in \R^4_2$,
$\overline{z_1 z_2} = \overline{z}_2
\hspace*{1pt}\overline{z}_1$. One can easily prove:

    \begin{pro} The following properties hold:
    \begin{itemize}
    \item[i)] For $z\in\R^4_2$, $\langle z, z\rangle
    = -z\overline{z} = -\overline{z}z=\langle \overline{z}, \overline{z}\rangle$.
    \item[ii)] In general, for $z_1, z_2 \in \R^4_2$, $\langle z_1, z_2\rangle
    = -{\mathrm Re}(z_1\overline{z}_2)$.
    \item[iii)] $z\in\mathbb{H}^3_1$ if, and only if, $z^{-1} =
    \overline{z}$.
    \item[iv)] $\langle\hspace{6pt},\hspace{3pt}\rangle$ is
    bi-invariant under multiplication by elements of $\mathbb{H}^3_1$, i. e., if $z_1, z_2\in \H^3_1$, then $\langle z_1 \,\eta \,z_2 , z_1 \,\rho\, z_2\rangle = \langle \eta, \rho\rangle$ .
    \end{itemize}
    \end{pro}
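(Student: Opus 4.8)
The plan is to reduce all four items to one explicit computation — that of the product $z\overline{z}$ for a general $z = a + bi + cj + dk$ — and then to combine it with the anti-multiplicativity $\overline{z_1 z_2} = \overline{z}_2\,\overline{z}_1$ already noted above. Associativity of the product comes for free, since the elements of $\mathcal{F}$ are linear maps of $\R^4_2$ under composition.

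First I would establish (i). Expanding $z\overline{z}$ and collecting with the multiplication table, the terms $\pm ab\,i$, $\pm ac\,j$, $\pm ad\,k$ cancel in pairs, the remaining mixed monomials cancel because $ij = -ji$, $ik = -ki$, $jk = -kj$, and the diagonal ones contribute $a^2 - b^2 i^2 - c^2 j^2 - d^2 k^2 = a^2 + b^2 - c^2 - d^2$ using $i^2 = -1$, $j^2 = k^2 = 1$. Hence $z\overline{z}$ is a real scalar, and it equals $-\langle z,z\rangle$ directly from the form $-dx_0^2 - dx_1^2 + dx_2^2 + dx_3^2$ of the metric. Replacing $z$ by $\overline{z}$ only changes the signs of $b,c,d$ and hence leaves this scalar unchanged, which yields simultaneously $\overline{z}z = z\overline{z}$ and $\langle\overline{z},\overline{z}\rangle = \langle z,z\rangle$, completing (i).

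For (ii) I would polarize: $\langle z_1,z_2\rangle = \frac{1}{2}\big(\langle z_1+z_2,z_1+z_2\rangle - \langle z_1,z_1\rangle - \langle z_2,z_2\rangle\big)$; substituting (i) and expanding $(z_1+z_2)\overline{(z_1+z_2)}$, the diagonal parts cancel and one is left with $-\frac{1}{2}(z_1\overline{z}_2 + z_2\overline{z}_1)$. Since $\overline{z_1\overline{z}_2} = \overline{\overline{z}_2}\,\overline{z}_1 = z_2\overline{z}_1$, this is $-\frac{1}{2}\big(z_1\overline{z}_2 + \overline{z_1\overline{z}_2}\big) = -\mathrm{Re}(z_1\overline{z}_2)$. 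Then (iii) is immediate from (i): $z \in \H^3_1 \iff \langle z,z\rangle = -1 \iff z\overline{z} = \overline{z}z = 1$, which is exactly $z^{-1} = \overline{z}$. Finally, for (iv), using (ii) and anti-multiplicativity, $\langle z_1\eta z_2,\, z_1\rho z_2\rangle = -\mathrm{Re}\big(z_1\,\eta\, z_2\,\overline{z}_2\,\overline{\rho}\,\overline{z}_1\big)$, and $z_2\overline{z}_2 = 1$ by (iii) collapses this to $-\mathrm{Re}\big(z_1\, w\,\overline{z}_1\big)$ with $w = \eta\overline{\rho}$; since $\overline{z_1 w \overline{z}_1} = z_1\overline{w}\,\overline{z}_1$, we get $\mathrm{Re}(z_1 w\overline{z}_1) = z_1\,\mathrm{Re}(w)\,\overline{z}_1 = \mathrm{Re}(w)\,z_1\overline{z}_1 = \mathrm{Re}(w)$, because $\mathrm{Re}(w)$ is a central real scalar and $z_1\overline{z}_1 = 1$ again by (iii). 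Hence $\langle z_1\eta z_2, z_1\rho z_2\rangle = -\mathrm{Re}(\eta\overline{\rho}) = \langle\eta,\rho\rangle$.

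There is no genuine obstacle here; the only delicate point is the sign bookkeeping in the multiplication table for (i) — making sure the mixed terms really cancel — together with the small but essential observation that $z\overline{z}$ lands in the real line, which is what makes it commute with everything, makes it equal $\overline{z}z$, and powers the centrality argument in (iv).
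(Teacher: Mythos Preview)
Your proof is correct and complete. The paper itself does not give a proof of this proposition at all --- it is introduced with the phrase ``One can easily prove'' and left to the reader --- so your argument (the explicit computation of $z\overline{z}$, polarization for (ii), and the centrality-of-$\mathrm{Re}(w)$ trick for (iv)) is exactly the kind of routine verification the authors are implicitly invoking.
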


\vspace{3mm} Property $iv)$ tells that the Lie group structure induced on $\H^3_1$ by this quaternion-like product is its canonical Lie group structure, that is, the one for which its metric is bi-invariant. Besides, we have the identities:
    \begin{equation*}
    \H^3_1 = \{z\in\R^4_2: \langle z, z\rangle = -1\} =
            \{z\in\R^4_2: z\overline{z} = 1\} =
            \{z\in\R^4_2: \overline{z} = z^{-1}\}
    \end{equation*}

Observe also that that $1\in\mathbb{H}^3_1$ and that the vectors $\{i,
j, k\}$ form an orthonormal basis of $T_1\mathbb{H}^3_1$, i.e. $$\langle
i, i \rangle = -1, \langle j, j\rangle = \langle k, k\rangle =
1, \hspace{0.5cm} \esiz i,k\esde=\esiz j,k\esde=\esiz i,j\esde=0.$$ This basis can be extended to a global left-invariant orthonormal frame $\{E_1, E_2, E_3\}$ on $\H^3_1$ as:
     \begin{equation*}
    E_1(z) = z i,\qquad E_2(z) = z j,\qquad
    E_3(z) = z k\qquad\forall z\in\H^3_1.
    \end{equation*}

Taking into account that we are thinking of $\mathbb{H}^3_1$ as an
hypersurface of $\R^4_2$, there is a natural way to define a cross
product on each tangent space $T_{z}\mathbb{H}^3_1$. For $u,
v\in T_{z}\mathbb{H}^3_1$, $u\times v$ is the unique vector
in $T_{z}\mathbb{H}^3_1$ such that:
    \begin{equation*}
    \langle u\times v, w\rangle = \det(z, u, v, w)
    \quad\forall w \in T_{z}\mathbb{H}^3_1 .
    \]
In particular, $i\times j = k$, $j\times k =i$ and $k \times i = j$.

For a curve $a:I\longrightarrow\mathbb{H}^3_1$ with $a(0) = 1$,
and a vector field $X$ along $a$, we say that $X$ is \emph{left (resp.
right) invariant} along $a$ if, for all $t\in I$, $X(t) = a(t)
X(0)$ (resp. $X(t) = X(0) a(t)$). Let $\nabla$ denote the Levi-Civita connection of $\H_1^3$. The next Lemma is similar to the analogous result in the sphere $\S^3$ (see \cite{Kit1}, \cite{Spi}). Hence, we will omit the proof.

\begin{lem}\label{lemados} Let $a:I\longrightarrow\mathbb{H}^3_1$ be a curve with $a(0) =
1$ and $X$ a vector field along $a$. Then:
    \begin{itemize}
        \item[i)] $X$ is left invariant along $a$ if, and only if,
        $\nabla_{a'}X = a'\times X$.

        \item[ii)] $X$ is right invariant along $a$ if, and only if,
        $\nabla_{a'}X = X\times a'$.
    \end{itemize}
\end{lem}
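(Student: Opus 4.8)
The plan is to compute the Levi-Civita connection $\nabla$ of $\H_1^3$ extrinsically and then feed in the pseudo-quaternionic product. Since $\H_1^3=\{z\in\R^4_2:\langle z,z\rangle=-1\}$, the position vector $a(t)$ is a unit timelike normal to $\H_1^3$ along the curve, so the Gauss formula gives, for any vector field $X$ along $a$ that is tangent to $\H_1^3$,
\begin{equation*}
\nabla_{a'}X=X'-\langle X,a'\rangle\,a ,
\end{equation*}
where $X'$ is the ordinary derivative of $X$ regarded as an $\R^4_2$-valued map. Indeed $\nabla_{a'}X$ is the tangential part of $X'$, namely $X'+\langle X',a\rangle a$ since $\langle a,a\rangle=-1$, and differentiating $\langle X,a\rangle\equiv0$ yields $\langle X',a\rangle=-\langle X,a'\rangle$. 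I would record this formula first.

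Next I would isolate two elementary facts. The algebraic one: a short computation from the product relations among $1,i,j,k$ shows that for pure imaginary $p,q$, i.e. for $p,q\in T_1\H_1^3$,
\begin{equation*}
pq=\langle p,q\rangle+p\times q ,
\end{equation*}
so $\Re(pq)=\langle p,q\rangle$ while the imaginary part of $pq$ is exactly the cross product $p\times q$. The geometric one: for $a\in\H_1^3$ the left translation $L_a\colon z\mapsto az$ is a linear isometry of $\R^4_2$ by the bi-invariance Property~$iv)$, hence lies in the corresponding pseudo-orthogonal group; since $\H_1^3$ is connected, contains $1$, and $\det L_1=1$, continuity of $a\mapsto\det L_a$ forces $\det L_a\equiv1$. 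Combining this with the defining relation $\langle u\times v,w\rangle=\det(z,u,v,w)$ and the bi-invariance of the metric, one obtains that $L_a$ — and, symmetrically, the right translation $R_a$ — preserves the cross product: $(a\eta)\times(a\rho)=a(\eta\times\rho)$ and $(\eta a)\times(\rho a)=(\eta\times\rho)a$ for $\eta,\rho\in T_1\H_1^3$.

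With these in hand, part $i)$ drops out. Set $Z(t)=a(t)^{-1}X(t)\in T_1\H_1^3$ and $\omega(t)=a(t)^{-1}a'(t)\in T_1\H_1^3$; the latter membership is just the statement that $a'(t)\in T_{a(t)}\H_1^3=a(t)\,T_1\H_1^3$, i.e. that $\omega$ is pure imaginary. Then $X=aZ$ and $a'=a\omega$, so by associativity $X'=a(\omega Z+Z')$ and $\langle X,a'\rangle=\langle aZ,a\omega\rangle=\langle Z,\omega\rangle$; substituting in the connection formula and using $\omega Z=\langle\omega,Z\rangle+\omega\times Z$ gives
\begin{equation*}
\nabla_{a'}X=a\bigl(\omega\times Z+Z'\bigr),\qquad a'\times X=(a\omega)\times(aZ)=a(\omega\times Z) .
\end{equation*}
Hence $\nabla_{a'}X=a'\times X$ if and only if $aZ'\equiv0$, i.e. $Z'\equiv0$, i.e. $X(t)=a(t)X(0)$, which is precisely left-invariance of $X$ along $a$. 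Part $ii)$ runs identically with right translations: writing $X(t)=Z(t)a(t)$ with $Z(t)=X(t)a(t)^{-1}\in T_1\H_1^3$ and $a'(t)=\tilde\omega(t)a(t)$ with $\tilde\omega(t)\in T_1\H_1^3$, one gets $\nabla_{a'}X=(Z'+Z\times\tilde\omega)a$ and $X\times a'=(Za)\times(\tilde\omega a)=(Z\times\tilde\omega)a$, so $\nabla_{a'}X=X\times a'$ exactly when $Z$ is constant, i.e. when $X$ is right-invariant.

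All of this is routine; the one point deserving care is the bookkeeping of signs, because in Lorentzian signature the cross product on $T_1\H_1^3$ is not fully cyclic ($i$ is timelike while $j,k$ are spacelike). Accordingly, the steps I would carry out most carefully are the claim that left and right translations preserve $\times$ (resting on the determinant-one argument above) and the correct sign in the identity $pq=\langle p,q\rangle+p\times q$ and in the tangential projection defining $\nabla$; everything else is bookkeeping parallel to the spherical case in \cite{Kit1,Spi}.
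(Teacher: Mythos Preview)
Your proof is correct and complete. The paper itself omits the proof of this lemma entirely, merely citing the analogous spherical result in \cite{Kit1,Spi}; your argument is precisely the natural adaptation of that standard proof to the pseudo-quaternionic setting, with the sign-sensitive steps (the identity $pq=\langle p,q\rangle+p\times q$ for pure imaginary $p,q$, and the invariance of $\times$ under left/right translation via $\det L_a=\det R_a=1$) handled correctly.
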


To close this section, let us define now the family of \emph{Hopf fibrations} on
$\mathbb{H}^3_1$. For each nonzero pure imaginary $\rho\in\R^4_2$, we define the map $h_\rho:\H_1^3\flecha \R_2^4$ as
    \begin{equation}\label{quatrho}
    h_\rho(z) = z \hspace*{1pt}\rho \hspace*{1pt}\overline{z}
    \qquad \forall\hspace*{2pt}z\in \mathbb{H}^3_1
    \end{equation}

    \begin{pro}\label{Hopf_fibration}
    For every nonzero pure imaginary $\rho, \eta\in \R^4_2$ and every $z\in\mathbb{H}^3_1$
    we have:
        \begin{itemize}
        \item[i)] $\langle h_\rho(z), 1\rangle = 0$
        \item[ii)] $\langle h_\rho(z), h_\eta(z)
        \rangle = \langle \rho, \eta\rangle$. In particular, $\langle h_\rho(z),
        h_\rho(z)\rangle = \langle \rho, \rho\rangle$.
        \item[iii)] If $\langle \rho, \rho\rangle\leq 0$, then $\langle
        \rho, i\rangle$ and $\langle h_\rho(z),
        i\rangle$ have the same sign.
        \end{itemize}
    \end{pro}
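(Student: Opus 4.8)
The plan is to prove the three items in turn, using only the conjugation identities and the bi-invariance of the metric from the previous Proposition, together with a connectedness argument for the last one. For item i), I would first show that $h_\rho(z)$ is pure imaginary for every $z\in\H^3_1$: since $\overline{z_1z_2}=\overline{z}_2\,\overline{z}_1$ and $\rho$ pure imaginary means $\overline{\rho}=-\rho$, one computes $\overline{h_\rho(z)}=\overline{z\,\rho\,\overline{z}}=\overline{\overline{z}}\,\overline{\rho}\,\overline{z}=z\,\overline{\rho}\,\overline{z}=-h_\rho(z)$, so ${\rm Re}\,(h_\rho(z))=0$; then property ii) of the previous Proposition gives $\langle h_\rho(z),1\rangle=-{\rm Re}\,(h_\rho(z)\overline{1})=-{\rm Re}\,(h_\rho(z))=0$. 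For item ii), I would invoke the bi-invariance of the metric (property iv) of the previous Proposition): since $z\in\H^3_1$ we have $z^{-1}=\overline{z}$ and $\overline{z}\in\H^3_1$ as well (its inverse being $\overline{\overline{z}}=z$), so taking $z_1=z$, $z_2=\overline{z}$ yields $\langle h_\rho(z),h_\eta(z)\rangle=\langle z\,\rho\,\overline{z},z\,\eta\,\overline{z}\rangle=\langle\rho,\eta\rangle$, the particular case being $\eta=\rho$.

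For item iii) I would argue by connectedness. Consider the continuous function $f:\H^3_1\to\R$, $f(z)=\langle h_\rho(z),i\rangle$; since $h_\rho(1)=\rho$ we have $f(1)=\langle\rho,i\rangle$. The statement follows once we show $f$ never vanishes, because $\H^3_1$ is connected (topologically a cylinder) and a continuous nowhere-zero function on a connected space has constant sign, so $f(z)$ and $f(1)=\langle\rho,i\rangle$ share the same sign. To check $f(z_0)\neq 0$ for all $z_0$: if $f(z_0)=0$, then by items i) and ii) the vector $w:=h_\rho(z_0)$ is pure imaginary, orthogonal to $i$, and satisfies $\langle w,w\rangle=\langle\rho,\rho\rangle\le 0$; writing $w=bi+cj+dk$ in the orthonormal basis $\{i,j,k\}$ of $T_1\H^3_1$, orthogonality to $i$ forces $b=0$, whence $\langle w,w\rangle=c^2+d^2\ge 0$, so $c=d=0$ and $w=0$. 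But $z_0$ is invertible and $\rho\neq 0$, hence $h_\rho(z_0)=z_0\,\rho\,\overline{z_0}\neq 0$, a contradiction. Applying the same computation to $z_0=1$ also shows $\langle\rho,i\rangle\neq 0$, so the sign comparison is meaningful.

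The computations behind items i) and ii) are routine manipulations of the pseudo-quaternionic algebra; the only step that needs an idea is item iii), where the causal hypothesis $\langle\rho,\rho\rangle\le 0$ has to be used in conjunction with items i) and ii) to rule out zeros of $f$ — once that is done, connectedness of $\H^3_1$ closes the argument immediately.
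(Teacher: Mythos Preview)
Your proof is correct and follows essentially the same approach as the paper: bi-invariance for items i) and ii), and for iii) the continuity/connectedness argument showing that $\varphi(z)=\langle h_\rho(z),i\rangle$ cannot vanish because $h_\rho(z_0)$ is pure imaginary with nonpositive norm and no $i$-component forces $h_\rho(z_0)=0$. The only minor difference is that for i) the paper appeals directly to bi-invariance (writing $\langle z\rho\overline{z},1\rangle=\langle z\rho\overline{z},z\,\overline{z}\rangle=\langle\rho,1\rangle=0$), whereas you compute $\overline{h_\rho(z)}=-h_\rho(z)$ explicitly; both are equally valid.
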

\vspace{3mm}\noindent\textit{Proof:} \textit{i)} and \textit{ii)}
are consequence of the bi-invariance of the metric.

\noindent To prove \textit{iii)} we set $\varphi(z)=\langle
h_\rho(z), i\rangle$. Obviously, $\varphi$ is a continuous
function over $\mathbb{H}^3_1$ with $\varphi(1)=\langle\rho,
i\rangle$. If $\varphi$ changed sign, there would exist some $z_0\in\mathbb{H}^3_1$ such
that $\varphi(z_0) = 0$. But this is impossible because
$\varphi(z_0) = 0$ means that $h_\rho(z_0)$ has no part
on $i$ and, by \textit{i)} and \textit{ii)} we know that
$h_\rho(z_0)$ is pure imaginary with $\langle
h_\rho(z_0), h_\rho(z_0)\rangle = \langle \rho,
\rho\rangle \leq 0$. \hfill$\Box$

\vspace*{5mm} After Proposition~\ref{Hopf_fibration} we can
distinguish three fundamental types of maps $h_\rho$ by looking at
their images.
    \begin{equation*}
    \begin{array}{ll}
    h_+:\mathbb{H}^3_1 \longrightarrow \mathbb{S}^2_1(r)
    &\text{ if }\langle\rho, \rho \rangle = r^2,\vspace*{1mm}\\
    h_-:\mathbb{H}^3_1 \longrightarrow \left(\mathbb{H}^2(r)\right)^\pm
    &\text{ if }\langle\rho, \rho \rangle = - r^2 \hspace*{2pt}\text{ and }
    \hspace*{2pt}\langle\rho, i \rangle \lessgtr 0,\vspace*{1mm}\\
    h_0:\mathbb{H}^3_1 \longrightarrow \left(\Lambda^2\right)^\pm
    &\text{ if }\langle\rho, \rho \rangle = 0 \hspace*{2pt}\text{ and }
    \hspace*{2pt}\langle\rho, i \rangle \lessgtr 0.\\
    \end{array}
    \end{equation*}
Here,
    \begin{equation*}
    \begin{aligned}
    \mathbb{S}^2_1(r) &= \{z\in\R^4_2: \langle z, 1 \rangle = 0,
    \langle z, z \rangle = r^2\},\vspace*{1mm}\\
    \left(\mathbb{H}^2(r)\right)^\pm &=
    \{z\in\R^4_2: \langle z, 1 \rangle = 0, \langle z, z \rangle =
    -r^2, \langle z, i\rangle \lessgtr 0\},\vspace*{1mm}\\
    \left(\Lambda^2\right)^\pm &=
    \{z\in\R^4_2: \langle z, 1 \rangle = 0, \langle z, z \rangle =
    0, \langle z, i\rangle \lessgtr 0\},
    \end{aligned}
    \end{equation*}
i.e. $\left(\mathbb{H}^2(r)\right)^+$, $\left(\mathbb{H}^2(r)\right)^-$, $\left(\Lambda^2\right)^+$ and $\left(\Lambda^2\right)^-$ denote each of the connected component of $\H^2(r)$ and $\Lambda^2\backslash\{0\}$ respectively.

All the maps $h_\rho$ are fibrations over their corresponding base
manifolds and, since their definition is similar to that of the classical Hopf fibration $h:\S^3\longrightarrow \S^2$,
we will also call them \emph{Hopf fibrations}.

Now, we are going to focus on the fibrations $h_\rho$
with $\langle \rho, \rho \rangle = 1$, $\langle \rho, \rho \rangle
= -1$ or $\langle \rho, \rho \rangle = 0$. In those cases we will
denote simply by $\mathbb{S}^2_1$, $\left(\mathbb{H}^2\right)^\pm$
or $\left(\Lambda^2\right)^\pm$ their base manifold. Moreover, when no confusion can arise, we will also omit the reference to the connected component, using simply $\H^2$ or $\Lambda^2$. It is not
difficult to show that
    \begin{equation*}
    h_\rho (z_1) = h_\rho(z_2) \quad \Longleftrightarrow\quad
    z_2 = \pm \, z_1 e^{t\rho}
    \end{equation*}
where
    \begin{equation}\label{fibras}
    \begin{array}{ll}
    e^{t\rho} := \cosh({t})1 + \sinh(t)\rho\quad
    &\text{if }\hspace*{1pt} \langle \rho,\rho\rangle = 1,\\
    e^{t\rho} := \cos({t})1 + \sin(t)\rho\quad
    &\text{if }\hspace*{1pt} \langle \rho,\rho\rangle = -1,\\
    e^{t\rho} := 1 + t\rho\quad
    &\text{if }\hspace*{1pt} \langle \rho,\rho\rangle = 0.
    \end{array}
    \end{equation}

\section{Isometric immersions of $\LO^2$ into $\H_1^3$}

Consider an isometric immersion $f:\LO^2\flecha \H_1^3$ from the
Lorentz plane $\LO^2$ into the anti-de Sitter $3$-space $\H_1^3$.
Here, $\LO^2$ will be viewed as the vector space $\R^2$ endowed
with the Lorentzian metric $ds^2 =-dx^2+dy^2$ in canonical
coordinates $(x,y)$. Before starting, let us remark that most of
what follows can be adapted for (not necessarily complete) simply
connected Lorentzian flat surfaces in $\H_1^3$, see the remark at
the end of this section.

Let $N(x,y):\R^2\flecha \S_2^3 :=\{x\in \R_2^4 : \esiz x,x\esde
=1\}$ denote the unit normal of the immersion $f$, chosen so that
the frame \[ \{f,f_x,f_y,N  \}
\]
is a positively oriented orthonormal frame in the manifold
$\R^4_2$. Then, the first and second fundamental forms of the
immersion are given, respectively, by

\begin{equation}\label{1f2f}
\left\{\def\arraystretch{1.3} \begin{array}{lll}
I & = & \langle df, df \rangle= -dx^2+dy^2, \\
II &= & -\langle df, dN \rangle= adx^2+2bdxdy+cdy^2,
\end{array}\right.
\end{equation}
where $a:=-\langle f_x, N_x \rangle, b:=- \langle f_x, N_y
\rangle$ and  $c:=-\langle f_y, N_y \rangle$ satisfy the
Gauss-Codazzi equations $$a_y = b_x, \hspace{0.6cm} c_x=b_y,
\hspace{0.6cm} ac -b^2 =-1.$$ Thus, there is some $\phi(x,y)\in
C^{\8} (\R^2)$ which is a solution to the hyperbolic Monge-Ampère
equation $$\phi_{xx} \phi_{yy} - \phi_{xy}^2 =-1$$ such that
$a=\phi_{xx}$, $b= \phi_{xy}$ and $c=\phi_{yy}$. Hence,

\begin{equation}\label{2phi}
II= \phi_{xx} dx^2 + 2\phi_{xy} dx dy + \phi_{yy} dy^2,
\hspace{1cm} \phi_{xx} \phi_{yy} - \phi_{xy}^2 =-1.
\end{equation}
This implies that, associated to $f$, there exists an Euclidean
isometric immersion $\tilde{f}(x,y):\R^2\flecha \S^3$ of the
Euclidean plane into the unit $3$-sphere $\S^3$ with first and
second fundamental forms given, respectively, by

\begin{equation}\label{12e}
\left\{\def\arraystretch{1.3} \begin{array}{lll}
\tilde{I} & = & dx^2+dy^2, \\
II &= & \phi_{xx} dx^2 + 2\phi_{xy} dx dy + \phi_{yy} dy^2.
\end{array}\right.
\end{equation}
This is just a consequence of the classical fact that
$(\tilde{I},II)$ as in \eqref{12e} verify the Gauss-Codazzi
equations for surfaces in $\S^3$. This correspondence was observed
with a different formulation by Dajczer and Nomizu \cite{Dajczer}. It must be emphasized that this correspondence is not \emph{geometric}, in the sense that it depends on the specific coordinates $(x,y)$ in $\LO^2$ that we choose. In other words, two different global Lorentzian coordinates $(x,y)$ and $(x',y')$ in $\LO^2$ differing by an isometry generate, in general, two non-congruent flat surfaces in $\S^3$.

Now, since $\tilde{f}$ is a complete flat surface in $\S^3$, it is
classically known (see \cite{Spi} for instance) that there exist
globally defined \emph{Tchebysheff coordinates} $(u,v)$ on the
surface. In other words, we may parametrize the surface as
$\tilde{f} (u,v):\R^2\flecha \S^3$ so that

\begin{equation}\label{te}
\left\{\def\arraystretch{1.3} \begin{array}{lll}
\tilde{I}& =& du^2+2\text{cos} \omega dudv +dv^2, \\
II&=&2 \text{sin}  \omega   dudv,
\end{array}\right.
\end{equation}
where $\omega (u,v)\in C^{\8} (\R^2)$ satisfies $0<\omega
(u,v)<\pi$ and $\omega_{uv}=0$. Note that from the expression of $II$ in \eqref{te} it is clear that the $u$-curves and the $v$-curves are the asymptotic curves of the immersion $\tilde{f}$.

Let us now find the explicit formula of the global diffeomorphism
of $\R^2$ given by the change of coordinates $$(u,v)\mapsto
(x(u,v), y(u,v)).$$ By comparing $\tilde{I}$ in \eqref{12e} and
\eqref{te} we get

\begin{equation}\label{sispa}
\left\{\begin{array}{ccl}
  x_u^2+y_u^2 & = & 1, \\
  x_ux_v+y_uy_v & = & \text{cos} \, \omega (u,v), \\
  x_v^2+y_v^2 & = & 1.
\end{array}\right.
\end{equation}
Any solution to \eqref{sispa} must be of the form
 \begin{equation}\label{sispa2}
 \def\arraystretch{1.2}\begin{array}{cc}
  x_u=\text{cos}\omega_1, & y_u=\text{sin}\omega_1, \\
  x_v=\text{cos}\omega_2, & y_v=-\text{sin}\omega_2.
\end{array}
 \end{equation}
where $\omega_i\in C^{\8}(\R^2)$ satisfy $\omega_1 +\omega_2 =
\omega $ (these functions are uniquely determined up to changes of the form
$\omega_1 \mapsto \omega_1 + 2 k\pi$, $\omega_2 \mapsto \omega_2 - 2 k\pi$, with $k\in\Z$). Using now that $(x_u)_v =(x_v)_u$
and $(y_u)_v= (y_v)_u$ we get
\[
\def\arraystretch{1.2}\begin{array}{llr}
  -(\omega_1)_v \sin \omega_1  &= & -(\omega_2)_u \sin \omega_2 \\
   (\omega_1)_v \cos \omega_1  &= & -(\omega_2)_u \cos \omega_2
\end{array}
\] i.e. either $(\omega_1)_v = (\omega_2)_u =0$ or $\sin (\omega_1+ \omega_2)=0$, the latter being not possible since $\sin \omega (u,v) \in (0,\pi)$. Thus, the function $\omega(u,v)$ appearing in \eqref{te} can be put in the form

 \begin{equation}\label{wave}
 \omega (u,v)= \omega_1 (u) + \omega_2 (v), \hspace{1cm} \omega_i \in C^{\8} (\R).
 \end{equation}
From here, the coordinates $(x,y)$ are given in terms of $(u,v)$ by
\begin{equation}\label{camcor}
\left\{\def\arraystretch{1.2}\begin{array}{lll}
  x(u,v)=\int \text{cos}\omega_1 du + \int \text{cos}\omega_2 dv +c_1,\\
  y(u,v)=\int \text{sin}\omega_1 du - \int \text{sin}\omega_2 dv
  +c_2,
\end{array}\right.
\end{equation}
where $c_1$ and $c_2$ are integration constants that can be chosen
to be zero, up to a translation in the $(x,y)$ plane. In
particular, the map given by \eqref{camcor} is a global
diffeomorphism of $\R^2$, whenever we start with a complete flat
surface in $\S^3$.

\begin{remark}\label{remcom}
Let us point out that the map $(x(u,v),y(u,v)):\R^2\flecha \R^2$ given by \eqref{camcor} is a global diffeomorphism if and only if the Riemannian metric 
 \begin{equation}\label{eccco}
 \tilde{I} = du^2 + 2\cos (\omega_1(u) + \omega_2(v)) \, du dv + dv^2
  \end{equation}
is complete.
\end{remark}

Once here, we can use \eqref{camcor} to express the Lorentzian
metric $I=-dx^2+dy^2$ in terms of the global
$(u,v)$-coordinates. First of all, let us observe that

\begin{equation*}
\begin{array}{lll}
 dx^2  &= &x_u^2du^2+2 x_ux_vdudv+x_v^2dv^2  \\
   &= &\text{cos}^2 \omega_1 du^2+ 2 \cos\omega_1
  \cos \omega_2dudv+ \text{cos}^2 \omega_2dv^2.
\end{array}
\end{equation*}
And then, the Lorentzian metric can be expressed as
\begin{equation*}
    \begin{aligned}
    -dx^2+dy^2 &= -2dx^2+dx^2+dy^2 \\
    &= -2dx^2 + du^2+2 \cos \omega dudv +dv^2\\
    &= - \text{cos}(2\omega_1 )du^2 - 2 \cos (\omega_1 -
\omega_2)dudv -\text{cos}(2\omega_2 )dv^2.
    \end{aligned}
    \end{equation*}

We have from the above discussion the following result.

\begin{pro}\label{cara}
Let $\omega_1 (u),\omega_2(v)\in C^{\8}(\R)$ such that
$\omega_1(u)+\omega_2(v)\in (0,\pi)$ for all $(u,v)\in \R^2$.
Then, there exists an immersion $f(u,v):\R^2\flecha \H_1^3$ whose
first, second and third fundamental forms are given by

\begin{equation}\label{tel}
\left\{\def\arraystretch{1.5} \begin{array}{lrrrr} I& =& -
\cos(2\omega_1 )du^2 &-  2 \cos (\omega_1 -
\omega_2)dudv &-\cos(2\omega_2 )dv^2, \\
II&=& &2\sin (\omega_1 + \omega_2) dudv  & \\
III & = & \cos (2\omega_1 )du^2 &-  2 \cos (\omega_1 -
\omega_2)dudv  &+ \cos(2\omega_2 )dv^2
\end{array}\right.
\end{equation}
In this way, $f$ describes a flat timelike surface in $\H_1^3$ whose
asymptotic curves are the images of the coordinate curves in the
$(u,v)$-plane. Moreover, $f$ represents an isometric immersion of
$\LO^2$ into $\H_1^3$ exactly when the local diffeomorphism
$(x(u,v),y(u,v))$ of $\R^2$ given by \eqref{camcor} is actually a
global diffeomorphism. A sufficient condition for \eqref{camcor}
to be a global diffeomorphism is that

\begin{equation}\label{st0}
0<c_1 \leq \omega_1 (u) + \omega_2 (v) \leq c_2 <\pi \hspace{1cm}
\forall (u,v)\in \R^2.
\end{equation}

Conversely, any isometric immersion $f(x,y):\LO^2\flecha \H_1^3$
admits a parametrization $f(u,v):\R^2\flecha \H_1^3$ such that
\eqref{tel} holds for some $\omega_1 (u),\omega_2(v)\in
C^{\8}(\R)$ verifying $\omega_1(u)+\omega_2(v)\in (0,\pi)$ for all
$(u,v)\in \R^2$. In that situation, the change of coordinates
$(u,v)\mapsto (x(u,v),y(u,v))$ is given by \eqref{camcor}.

\end{pro}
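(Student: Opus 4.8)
The plan is to prove the two halves of Proposition \ref{cara} separately, relying on the work already done above.

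\textbf{Existence half.} Given $\omega_1(u),\omega_2(v)\in C^\infty(\R)$ with $\omega_1(u)+\omega_2(v)\in(0,\pi)$, I would first use the change of variables \eqref{camcor} to produce a local diffeomorphism $(x(u,v),y(u,v))$ of $\R^2$ and run the computation displayed just above the statement backwards: plugging $x_u=\cos\omega_1$, $x_v=\cos\omega_2$, $y_u=\sin\omega_1$, $y_v=-\sin\omega_2$ into $I=-dx^2+dy^2$ and $\tilde I=dx^2+dy^2$ yields exactly the first row of \eqref{tel} and the formula $\tilde I=du^2+2\cos(\omega_1+\omega_2)\,dudv+dv^2$. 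Next I invoke the classical Gauss--Codazzi/Bonnet theorem for surfaces in $\S^3$: since $(\tilde I, II)$ with $II=2\sin(\omega_1+\omega_2)\,dudv$ satisfies the Gauss equation ($K_{\tilde I}=1-\det II/\det\tilde I$, which one checks equals $1$ because $\det\tilde I=\sin^2\omega$ and $\det II=-\sin^2\omega$ gives $K=1+1\cdot$... — careful: one must verify $K_{\tilde I}=0$ matches $1-(-\sin^2\omega)/\sin^2\omega=0$) and the Codazzi equations (automatic since $\omega_{uv}=0$, as $\omega=\omega_1(u)+\omega_2(v)$), there is an isometric immersion $\tilde f:\R^2\to\S^3$ realizing them; the standard $\S^3$--$\H_1^3$ correspondence recorded in \eqref{1f2f}--\eqref{2phi} then converts this into an immersion $f:\R^2\to\H_1^3$ with first and second fundamental forms as in the first two rows of \eqref{tel}, which is automatically timelike and flat. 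The third fundamental form $III=\langle dN,dN\rangle$ is then computed from $I$ and $II$ using the shape operator relation $III=2\,\mathrm{tr}(S)\,II - \det(S)\,I$ in an orthonormal-adapted frame, or more directly by noting $III=-I+2\tilde I$ after expressing everything in $(u,v)$, which produces the displayed third row.

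\textbf{When $f$ is genuinely defined on $\LO^2$.} The immersion $f(u,v)$ descends to an isometric immersion of the \emph{Lorentz plane} $\LO^2$ exactly when the ambient flat metric $I$ in \eqref{tel}, pulled back to the $(x,y)$-coordinates, is the standard complete metric $-dx^2+dy^2$ on all of $\R^2$ — equivalently, when $(x(u,v),y(u,v))$ is a global diffeomorphism. By Remark \ref{remcom} this is equivalent to completeness of the Riemannian metric \eqref{eccco}, and I would simply cite that remark. For the sufficient condition \eqref{st0}, I would argue that if $c_1\le\omega\le c_2$ with $0<c_1$ and $c_2<\pi$, then $|\cos\omega|\le\max(|\cos c_1|,|\cos c_2|)=:\kappa<1$, so the metric \eqref{eccco} satisfies $\tilde I\ge(1-\kappa)(du^2+dv^2)$; hence $\tilde I$ is bounded below by a complete metric and is therefore complete, and \eqref{camcor} is a global diffeomorphism.

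\textbf{Converse half.} This is where essentially all the content — and the one genuine obstacle — lives, and it is precisely the chain of deductions carried out in the text preceding the statement. Starting from an arbitrary isometric immersion $f:\LO^2\to\H_1^3$, one passes (via \eqref{1f2f}--\eqref{2phi}) to the associated complete flat surface $\tilde f:\R^2\to\S^3$, invokes the classical existence of global Tchebysheff (asymptotic) coordinates $(u,v)$ on $\tilde f$ giving \eqref{te}, solves the first-order system \eqref{sispa} to get \eqref{sispa2}, uses the compatibility $(x_u)_v=(x_v)_u$ together with $\sin\omega\in(0,\pi)$ to force $\omega=\omega_1(u)+\omega_2(v)$ as in \eqref{wave}, and finally rewrites $I=-dx^2+dy^2$ in the $(u,v)$-coordinates to obtain \eqref{tel}, with the change of coordinates necessarily of the form \eqref{camcor}. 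The delicate point — and the reason this is not a triviality — is that the $(u,v)$-coordinates are constructed on the \emph{Euclidean companion} $\tilde f$, not on $f$ itself, so one must check that this global coordinate change on $\R^2$ is simultaneously a global parametrization of the Lorentzian surface $f$; since both $f$ and $\tilde f$ are defined on the same underlying $\R^2=\LO^2$ and the Tchebysheff change of coordinates \eqref{camcor} is a diffeomorphism of $\R^2$ (completeness of $\tilde f$), this transfer is legitimate, and I would spell it out explicitly rather than leave it implicit. Everything else in the converse is the bookkeeping already displayed above the statement, so the proof is essentially an assembly of those computations.
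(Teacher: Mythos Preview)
Your overall strategy matches the paper's: the converse half is exactly the chain of computations displayed before the statement, and for existence you (like the paper) invoke the fundamental theorem of surfaces once Gauss--Codazzi is verified, then handle the global-diffeomorphism criterion via Remark~\ref{remcom} and the bound $|\cos\omega|\le\kappa<1$. Two points, however, need correction.

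First, both of your shortcuts for the third fundamental form are wrong as written. The Cayley--Hamilton identity for the shape operator gives $III=(\mathrm{tr}\,S)\,II-(\det S)\,I$, not $2\,\mathrm{tr}\,S$; and the formula $III=-I+2\tilde I$ is simply false (check the $du^2$ coefficient: you would get $\cos(2\omega_1)+2$, not $\cos(2\omega_1)$). The paper instead writes out the Gauss--Weingarten system \eqref{gausfor}--\eqref{weinfor} explicitly and computes $III=\langle dN,dN\rangle$ directly from \eqref{weinfor}. This is not wasted effort: those structure equations are used repeatedly in Section~4.

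Second, the detour through $\S^3$ for the existence half is unnecessary and slightly circular. The ``correspondence'' in \eqref{1f2f}--\eqref{12e} is nothing more than the observation that the two Gauss--Codazzi systems coincide in the chosen coordinates; invoking it to produce $f$ from $\tilde f$ still amounts to checking Gauss--Codazzi for $(I,II)$ in $\H_1^3$ directly, which is what the paper does. (Your parenthetical Gauss-equation check also has a sign slip: for surfaces in $\S^3$ one has $K_{\tilde I}=1+\det II/\det\tilde I$, giving $1+(-1)=0$ here.)

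One small addition worth making explicit: for the ``only if'' direction of the $\LO^2$-criterion, the paper argues that if $f$ already is an isometric immersion of $\LO^2$ with canonical coordinates $(x',y')$, then $-dx^2+dy^2=-dx'^2+dy'^2$ forces $(x,y)$ and $(x',y')$ to differ by an isometry of $\LO^2$, whence \eqref{camcor} is a global diffeomorphism. Your version of this step is a bit hand-wavy and would benefit from stating that argument.
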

 \begin{proof}
All the statements of the converse part follow from the previous
discussion, except for the expression of the third fundamental
form $III=\esiz dN,dN\esde$. In this sense, a standard derivation
of the Gauss-Weingarten formulas of the immersion
$f(u,v):\R^2\flecha \H_1^3$ yields

\begin{equation}\label{gausfor} \left\{ \def\arraystretch{1.9} \begin{array}{lll}

    f_{uu} &=& \dfrac{\omega_1' \cos(\omega_1 + \omega_2)}{\sin(\omega_1 + \omega_2)} f_u
    - \dfrac{\omega_1'}{\sin(\omega_1 + \omega_2)} f_v
    - \cos(2\omega_1) f,\\
    f_{uv} &=& \sin(\omega_1 + \omega_2)) N
    - \cos(\omega_1 - \omega_2)f, \\
    f_{vv} &=& -\dfrac{\omega_2'}{\sin(\omega_1 + \omega_2)} f_u
    + \dfrac{\omega_2' \cos(\omega_1 + \omega_2)}{\sin(\omega_1 + \omega_2)} f_v
    - \cos(2\omega_2) f
    \end{array} \right.
    \end{equation}
and
\begin{equation}\label{weinfor} \left\{ \def\arraystretch{2} \begin{array}{lll}

     N_u &=& \dfrac{\cos(\omega_1 - \omega_2)}{\sin(\omega_1 + \omega_2)} f_u
    - \dfrac{\cos(2\omega_1)}{\sin(\omega_1 + \omega_2)} f_v,
   \\
    N_v &=& -\dfrac{\cos(2\omega_2)}{\sin(\omega_1 + \omega_2)} f_u
    + \dfrac{\cos(\omega_1 - \omega_2)}{\sin(\omega_1 + \omega_2)}f_v.
    \end{array} \right.
    \end{equation}
From \eqref{weinfor} and the expression of $I$ in \eqref{tel} we
get that $$III  =  \esiz dN,dN\esde = \cos (2\omega_1 )du^2 -  2
\cos (\omega_1 - \omega_2)dudv  + \cos(2\omega_2 )dv^2,$$ as
wished.

Now, assume that we are given $\omega_1(u),\omega_2 (v)\in C^{\8}
(\R)$ with $\omega_1 +\omega_2 \in (0,\pi)$. Then, the existence
of the immersion $f(u,v):\R^2\flecha \H_1^3$ such that \eqref{tel}
holds follows from the Gauss-Codazzi equations and
\eqref{gausfor}, \eqref{weinfor}. The metric $I$ is flat and
timelike, and if we use the local coordinates $(x,y)$ given by
\eqref{camcor}, we have
 \begin{equation}\label{strr}
 I= -dx^2 + dy^2.
 \end{equation}
So, clearly, $f$ will describe an isometric immersion of $\LO^2$
into $\H_1^3$ with canonical coordinates $(x,y)$ if \eqref{camcor}
is a global diffeomorphism. Conversely, assume that $f$ describes
an isometric immersion of $\LO^2$ into $\H_1^3$ with canonical
coordinates $(x',y')$. Then $I= -dx^2 + dy^2 = -dx'^2 + dy'^2$ by
\eqref{strr}. But this implies that $(x,y)$ and $(x',y')$ differ
by an isometry of $\LO^2$, and hence \eqref{camcor} is a global
diffeomorphism.

Finally, if \eqref{st0} holds and we denote $\Phi (u,v)=
(x(u,v),y(u,v)):\R^2\flecha \R^2$, then we get immediately from
\eqref{camcor} that the gradient of $\Phi^{-1}$ has bounded norm
around any point, i.e. $$ || D (\Phi^{-1}) || \leq M < \8$$ for
some $M>0$. So, $\Phi$ is a global diffeomorphism by the
Hadamard-Plastock inversion theorem. (Alternatively, if \eqref{st0} holds, then $|\cos \omega|\leq c_0<1$ for some $c_0$, and so $\tilde{I} \geq (1-c_0^2) (du^2+dv^2)$ for the Riemannian metric in \eqref{eccco}; thus $\tilde{I}$ is complete, and by Remark \ref{remcom}, $\Phi$ is a global diffeomorphism).
 \end{proof}

\begin{remark}
\emph{In the previous arguments, the only place
where completeness plays a role is in the existence of the
global parameters $(u,v)$. Nonetheless, these
parameters always exist locally, as can be deduced from
\eqref{camcor} and the fact that the \emph{flat} coordinates
$(x,y)$ always exist locally for any (abstract) Lorentzian flat
surface. Moreover, if we start with a simply connected Lorentzian
flat surface $\Sigma$, then one can still choose a
\emph{coordinate immersion} $(x,y):\Sigma\flecha \LO^2$ into the
Lorentz plane that serves as a substitute to the one-to-one
coordinates $(x,y)$ that exist locally or for complete Lorentzian
flat surfaces (see \cite{AGM1}).}

\emph{It comes then clear from these comments that all the previous
process can be readily formulated for arbitrary simply connected
Lorentzian flat surfaces isometrically immersed in $\H_1^3$. Obviously, in that case we should not impose that \eqref{camcor} is a global diffeomorphism.}
\end{remark}
\begin{remark}
\emph{In the Euclidean case, the functions
$\omega_i$ in \eqref{wave} are uniquely determined up to the
change}
 \begin{equation}\label{ambig}
 \omega_1(u)\mapsto \omega_1(u) + c, \hspace{1cm} \omega_2(v) \mapsto \omega_2(v) -c ,\hspace{1cm} c\in \R.
 \end{equation}
\emph{In the present Lorentzian case, this ambiguity does not hold anymore, i.e. a change like \eqref{ambig} also changes the resulting Lorentzian flat surface in $\H_1^3$.}
\end{remark}

\section{A representation formula}

Our aim in this section is to prove that any isometric immersion
of $\LO^2$ into $\H_1^3$ can be represented, with respect to the
characteristic parameters $(u,v)$ provided by Proposition
\ref{cara}, as the product of two adequate curves in $\H_1^3$. We
split this result into two separate theorems.

The first one is:

 \begin{teo}\label{representation1}
    Let $f(u,v):\R^2\longrightarrow\mathbb{H}^3_1$ be an isometric
    immersion of $\mathbb{L}^2$ into $\mathbb{H}^3_1$ where
    $(u,v)$ are the global characteristic parameters given in Proposition \ref{cara}.
    Let $N(u,v):\R^2\longrightarrow\mathbb{S}^3_2$ denote its unit normal, and assume without loss of generality that $f(0,0) =
    1$ and $N(0,0)=j$. Then, we have
        \begin{equation}\label{es3}
        f(u,v) = a_1(u)a_2(v),\qquad N(u,v) = a_1(u)ja_2(v),
        \end{equation}
    for $a_1(u) := f(u,0)$ and $a_2(v) := f(0,v)$. Moreover,
    these two asymptotic curves verify
       \begin{equation}\label{dobless}
       \langle a_1'(u), a_1(u) j\rangle = 0 = \langle a_2'(v), j
        a_2(v)\rangle.
       \end{equation}
    \end{teo}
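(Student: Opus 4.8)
The plan is to exploit the left-invariant frame structure on $\H_1^3$ together with the Gauss--Weingarten equations \eqref{gausfor}, \eqref{weinfor} to show that the immersion splits as a pseudo-quaternionic product along the two asymptotic coordinate curves. First I would define $a_1(u) := f(u,0)$ and $a_2(v) := f(0,v)$ and introduce the candidate map $g(u,v) := a_1(u)a_2(v)$, with $g(0,0) = a_1(0)a_2(0) = f(0,0)\cdot 1 = 1$ using $a_2(0) = f(0,0) = 1$ (and similarly $a_1(0) = 1$). The strategy is to prove $f \equiv g$ by showing they satisfy the same first-order system of PDEs with the same initial condition along the coordinate axes, and then invoke uniqueness of solutions to ODEs along characteristics (or, more directly, verify $g$ solves the same Gauss--Weingarten system as $f$).

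The key computation is to differentiate $g(u,v) = a_1(u)a_2(v)$. We get $g_u = a_1'(u)a_2(v)$ and $g_v = a_1(u)a_2'(v)$, hence $g_{uv} = a_1'(u)a_2'(v)$. On the axes, $g_u(u,0) = a_1'(u)$ and $g_v(0,v) = a_2'(v)$, which match $f_u(u,0)$ and $f_v(0,v)$ by construction. Now, from \eqref{gausfor} restricted to $v=0$, the curve $a_1$ satisfies the ODE $a_1'' = \frac{\omega_1'\cos\omega}{\sin\omega}\,a_1' - \frac{\omega_1'}{\sin\omega}\,f_v(u,0) - \cos(2\omega_1)\,a_1$, where $f_v(u,0)$ must first be expressed in terms of $a_1$, $a_1'$ — this is where \eqref{dobless} and the left-invariance will be used. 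The idea is that $f_v$ along $v=0$ is a left-invariant vector field (or differs from one in a controlled way), so that $f_v(u,0) = a_1(u)\,w$ for some fixed $w \in T_1\H_1^3$; evaluating at $u=0$ with $f_v(0,0) = a_2'(0)$ identifies $w = a_2'(0)$. Then one checks that $g_v(u,0) = a_1(u)a_2'(0)$ agrees with $f_v(u,0)$, and symmetrically $g_u(0,v) = a_1'(0)a_2(v)$ agrees with $f_u(0,v)$. Once the first derivatives of $f$ and $g$ agree on the two axes and both satisfy the same second-order system \eqref{gausfor} (whose coefficients depend only on $(u,v)$ through $\omega_1,\omega_2$), the standard uniqueness theorem forces $f = g$ everywhere, and then $N(u,v) = a_1(u)\,j\,a_2(v)$ follows because the normal is determined by $f, f_u, f_v$ via the orientation condition and the product structure is orthogonality-preserving by Proposition iv).

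To establish \eqref{dobless}, I would argue as follows: along $v=0$, the vector field $f_v(u,0)$ lies in the tangent plane $\mathrm{span}\{f_u, f_v\}$ and, since $a_1$ is an asymptotic curve, the acceleration $a_1''$ has no normal component — this is exactly the content of the first equation of \eqref{gausfor} having no $N$ term. Using Lemma \ref{lemados}, the condition that $f_v(u,0)$ be right-invariant along $a_1$ (or that some naturally associated field be left-invariant) translates into $\nabla_{a_1'}(\cdot) = (\cdot)\times a_1'$ or $a_1' \times (\cdot)$; matching this against \eqref{gausfor}, \eqref{weinfor} and using $f(0,0)=1$, $N(0,0)=j$ pins down the relation $\langle a_1'(u), a_1(u)j\rangle = 0$, i.e. $a_1'$ is everywhere orthogonal to $E_2$ along $a_1$ — equivalently $a_1$ is a horizontal lift for the Hopf fibration $h_j$. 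The symmetric statement for $a_2$ uses right-invariance instead of left-invariance, which accounts for the asymmetry $a_1 j$ versus $j a_2$ in \eqref{dobless}.

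The main obstacle I anticipate is the bookkeeping needed to show $f_v(u,0) = a_1(u)\,a_2'(0)$ (and the mirror identity): this requires verifying that the vector field $u \mapsto f_v(u,0)$ along $a_1$ is left-invariant, which by Lemma \ref{lemados} means checking $\nabla_{a_1'} f_v = a_1' \times f_v$ along $v=0$. Computing $\nabla_{a_1'} f_v = \nabla_{f_u} f_v = f_{uv} + (\text{normal part})$, and using \eqref{gausfor} for $f_{uv}$ together with the cross-product identities $i\times j = k$, etc., one must confirm this equals $a_1' \times f_v$; the cross product $a_1' \times f_v$ needs to be computed in $T_{a_1(u)}\H_1^3$ using the left-translated frame $\{a_1 i, a_1 j, a_1 k\}$, and the causal character of $a_1'$ varies, so signs in the Lorentzian cross product must be tracked carefully. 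This is the computational heart of the argument; once it is in place, everything else is either a direct consequence of left-invariance (via Lemma \ref{lemados}) or the bi-invariance of the metric (Proposition iv).
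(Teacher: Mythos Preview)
Your overall strategy matches the paper's: establish left-invariance of $f_v$ (and $N$, $N_v$) along $a_1$ and right-invariance of $f_u$ (and $N$, $N_u$) along $a_2$ via Lemma~\ref{lemados}, then use an ODE uniqueness argument to conclude $f=g$. You also correctly identify the core computation, namely checking $\nabla_{a_1'}f_v = a_1'\times f_v$ from the $f_{uv}$ formula in \eqref{gausfor}. The derivation of \eqref{dobless} is in fact much simpler than you suggest: once you know $N(u,0)=a_1(u)j$ (left-invariance of $N$), the identity $\langle a_1', a_1 j\rangle = \langle f_u(u,0), N(u,0)\rangle = 0$ is immediate.

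There is, however, a real gap in your final step. You assert that $g(u,v)=a_1(u)a_2(v)$ satisfies the same second-order system \eqref{gausfor} as $f$, but this is not automatic: $g_{uu}(u,v)=a_1''(u)a_2(v)$, and $a_1''(u)=f_{uu}(u,0)$ is governed by \eqref{gausfor} at $(u,0)$, whose coefficients involve $\omega_1(u)+\omega_2(0)$, not $\omega_1(u)+\omega_2(v)$. So when you try to express $g_{uu}(u,v)$ in terms of $g, g_u, g_v$, the coefficients you obtain are those at $(u,0)$, and it is not obvious that they rearrange into the coefficients at $(u,v)$. The paper resolves this by \emph{not} comparing the full Gauss--Weingarten systems. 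Instead, for each fixed $v_0$ it builds an ad~hoc frame $\{\vec t_i,\vec n_i,\vec b_i\}$ along each of $\Gamma_1(u)=f(u,v_0)$ and $\Gamma_2(u)=a_1(u)a_2(v_0)$, with $\vec n_i$ chosen precisely so that the resulting first-order ODE system
\[
\nabla\vec t = \omega_1'\,\vec n,\qquad \nabla\vec n = -\omega_1'\,\vec t - \vec b,\qquad \nabla\vec b = \sin(2\omega_1)\,\vec t + \cos(2\omega_1)\,\vec n
\]
has coefficients depending only on $\omega_1(u)$ and not on $v_0$. Matching initial frames at $u=0$ (using the right-invariance of $f_u$, $N$, $N_u$ along $a_2$) then forces $\Gamma_1\equiv\Gamma_2$. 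This $v_0$-independent frame is the technical idea your sketch is missing; without it, the claim ``both satisfy \eqref{gausfor}'' needs substantial additional justification (essentially the computations of Theorem~\ref{representation2}).
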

To  prove Theorem \ref{representation1}, we will use the following
result.
    \begin{lem}\label{lemaus}
    Under the hypotheses of Theorem \ref{representation1}, we have:
        \begin{itemize}
        \item[i)] $N$, $f_v$ and $N_v$ are left invariant along $a_1(u)$.
        \item[ii)] $N$, $f_u$ and $N_u$ are right invariant along $a_2(v)$.
        \end{itemize}

    \end{lem}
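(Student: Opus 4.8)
The plan is to use Lemma~\ref{lemados}. By our normalization $a_1(0)=f(0,0)=1$ and $a_2(0)=1$, while $a_1'(u)=f_u(u,0)$ and $a_2'(v)=f_v(0,v)$. Hence, by Lemma~\ref{lemados}, part i) is equivalent to the three identities
\[
\nabla_{f_u}N=f_u\times N,\qquad \nabla_{f_u}f_v=f_u\times f_v,\qquad \nabla_{f_u}N_v=f_u\times N_v
\]
holding along the curve $v=0$, and part ii) to
\[
\nabla_{f_v}N=N\times f_v,\qquad \nabla_{f_v}f_u=f_u\times f_v,\qquad \nabla_{f_v}N_u=N_u\times f_v
\]
holding along $u=0$. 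We prove i); part ii) is entirely symmetric under $u\leftrightarrow v$, invoking part ii) of Lemma~\ref{lemados}. (These identities in fact hold at every point of $\R^2$; we use Lemma~\ref{lemados} only along $a_1$ and $a_2$ because it is stated for curves issuing from the identity.)

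\emph{The covariant derivatives.} Since $\H_1^3$ is the hypersurface $\{\langle z,z\rangle=-1\}$ of $\R^4_2$, whose timelike unit normal at $z$ is the position vector $z$ itself, any vector field $X$ along $f$ that is tangent to $\H_1^3$ satisfies $\nabla_{f_u}X=X_u+\langle X_u,f\rangle f$; that is, $\nabla_{f_u}X$ is obtained from the ambient derivative $X_u$ by discarding its $f$-component. Feeding the Gauss--Weingarten equations \eqref{gausfor}--\eqref{weinfor} into this rule, $\nabla_{f_u}N$ and $\nabla_{f_u}f_v$ are read off at once (the latter being the $N$-term of $f_{uv}$). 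For $\nabla_{f_u}N_v$ one differentiates the expression of $N_v$ in \eqref{weinfor} once more in $u$, substitutes $f_{uu}$ and $f_{uv}$ from \eqref{gausfor}, and simplifies: the $f_u$- and $f_v$-terms cancel thanks to the identity
\[
\cos(2\omega_1)\cos(2\omega_2)-\cos^2(\omega_1-\omega_2)=-\sin^2(\omega_1+\omega_2),
\]
leaving a pure $N$-term (plus an $f$-term, dropped in the projection).

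\emph{The cross products.} First, left translation $L_z$ by any $z\in\H_1^3$ has $\det L_z=+1$: it is a linear isometry of $\R^4_2$ by bi-invariance of the metric, so $\det L_z=\pm1$, and the value is $+1$ by continuity of $z\mapsto\det L_z$, connectedness of $\H_1^3$, and $\det L_1=1$. Plugging $L_z$ into $\langle u\times v,w\rangle=\det(z,u,v,w)$ shows the cross product is left-invariant, $(za)\times(zb)=z(a\times b)$. Using this together with $\langle u\times v,w\rangle=\det(z,u,v,w)$ and the fundamental forms \eqref{tel}: $f_u\times f_v$ is orthogonal to $f_u$, $f_v$ and $f$ and lies in $T_f\H_1^3$, hence is a multiple of $N$, namely $f_u\times f_v=\det(f,f_u,f_v,N)\,N$, and the scalar $\det(f,f_u,f_v,N)$ is evaluated from the Jacobian of the change of coordinates \eqref{camcor} and the orientation of $\{f,f_x,f_y,N\}$. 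Next, $f_u\times N$ lies in $T_f\H_1^3$ and is orthogonal to $N$, hence is in $\mathrm{span}\{f_u,f_v\}$; writing $f_u\times N=\alpha f_u+\beta f_v$ and pairing with $f_u$ and $f_v$ gives a $2\times2$ linear system — whose coefficient determinant is again controlled by the trigonometric identity above — which one solves for $\alpha,\beta$. Finally $f_u\times N_v$ requires nothing new: by \eqref{weinfor}, $N_v$ is a linear combination of $f_u$ and $f_v$, so $f_u\times N_v$ is a scalar multiple of $f_u\times f_v$.

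Comparing the two sets of computations gives the three required identities along $v=0$, proving i); part ii) follows verbatim with $u$ and $v$ interchanged. The bulk of the work, and the main obstacle, is the second-order computation of $N_{uv}$ (and, for ii), of $N_{uv}$ approached through the $v$-variable), where the trigonometric identity above is essential; the remaining subtlety is purely a matter of keeping the orientation of $\{f,f_x,f_y,N\}$, the sign convention $\langle u\times v,w\rangle=\det(z,u,v,w)$, and the signs in the Gauss--Weingarten equations mutually consistent, which is exactly what the normalization $f(0,0)=1$, $N(0,0)=j$ serves to fix.
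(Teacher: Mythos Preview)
Your approach is correct and follows the same template as the paper: reduce via Lemma~\ref{lemados} to cross-product identities, then verify these using the Gauss--Weingarten system \eqref{gausfor}--\eqref{weinfor}. There are two places where the paper is more economical.

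First, to identify $f_u\times N$ the paper does not solve a $2\times2$ system. It observes that $N_u$ is tangent to $\H_1^3$, orthogonal to $f_u$ and $N$, and has the same norm as $f_u\times N$, so $N_u=\pm f_u\times N$; the sign is fixed by the single inner product $\langle N_u,f_v\rangle=-\sin(\omega_1+\omega_2)<0$. This yields $N_u=f_u\times N$ (and symmetrically $N_v=N\times f_v$) at every point, with $\|f_u\times f_v\|=\sin(\omega_1+\omega_2)$ falling out of the same computation.

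Second, and more substantially, the paper avoids your second-order computation of $N_{uv}$ altogether. Once $N$ and $f_v$ are known to be left invariant along $a_1$, the pointwise identity $N_v=N\times f_v$ just established, together with left-invariance of the cross product, gives
\[
N_v(u,0)=N(u,0)\times f_v(u,0)=\bigl(a_1(u)N(0,0)\bigr)\times\bigl(a_1(u)f_v(0,0)\bigr)=a_1(u)\,N_v(0,0),
\]
so $N_v$ is left invariant along $a_1$ for free. Your direct differentiation of the Weingarten expression of $N_v$ in $u$ also works (the $f_u$- and $f_v$-coefficients do cancel, leaving $\nabla_{f_u}N_v=\cos(\omega_1-\omega_2)\,N=f_u\times N_v$), but it is unnecessary bookkeeping.
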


 \begin{proof}
By \eqref{tel} we see that $N_u$ is
orthogonal to $N$, $f$ and $f_u$ and that $\langle N_u, N_u
\rangle = \cos (2\omega_1) = - \langle f_u, f_u \rangle$.
Therefore, we have $N_u = \pm f_u\times N$.

To determine this sign we take into account that, since $\omega_1
+ \omega_2 \in (0,\pi)$,
    \begin{equation*}
    \begin{aligned}
    0 &> - \sin(\omega_1 + \omega_2) = \langle N_u, f_v\rangle
    = \langle \pm f_u \times N, f_v\rangle\\
    &= \mp \langle
    f_u\times f_v, N\rangle
    = \mp\left\langle f_u\times f_v , \dfrac{f_u\times f_v}{||f_u\times
    f_v||}\right\rangle\\ &= \mp ||f_u\times f_v||.
    \end{aligned}
    \end{equation*}
Then, we deduce that
    \begin{equation}\label{norma f_u x f_v}
    ||f_u\times f_v|| = \sin(\omega_1 + \omega_2)
    \end{equation}
and that
    \begin{equation*}
    N_u(u,v) = f_u (u,v) \times N(u,v).
    \end{equation*}
In particular,
    \begin{equation*}
    \nabla_{a_1'} N = N_u(u,0) = f_u(u,0) \times N(u,0) = a_1'\times
    N.
    \end{equation*}
So, by Lemma \ref{lemados} we conclude that $N$ is left invariant
along $a_1$. A similar argument yields
    \begin{equation}\label{expresion N_v}
    N_v(u,v) = N(u,v) \times f_v(u,v).
    \end{equation}
Thus, particularizing at points of the form $(0,v)$ we can apply
Lemma \ref{lemados} to deduce that $N$ is right invariant along
$a_2$.

Now, we consider the vector field $f_{uv}(u,v)$. Using
 \eqref{gausfor} and \eqref{norma f_u x f_v} we get
    \begin{equation*}
    f_{uv}(u,v) = f_u(u,v)\times f_v(u,v) - \cos(\omega_1 -
    \omega_2) f(u,v).
    \end{equation*}
At points of the form $(u,0)$, this equality provides
    \begin{equation*}
    \nabla_{a_1'}f_v = \big(f_{uv}(u,0)\big)^\top = f_u(u,0)\times
    f_v(u,0) = a_1'\times f_v.
    \end{equation*}
Again, Lemma \ref{lemados} gives the desired conclusion. The fact
that $f_u$ is right invariant along $a_2$ is obtained in the same
way.

Finally, if we use left invariancy along $a_1$ of $N$ and $f_v$ in
\eqref{expresion N_v}, we obtain
    \begin{equation*}
    \begin{aligned}
    N_v(u,0) &= N(u,0) \times f_v(u,0) =
    \big(a_1(u)N(0,0)\big)\times \big(a_1(u) f_v(0,0)\big)\\
    &= a_1(u) \big(N(0,0) \times f_v(0,0)\big) = a_1(u) N_v(0,0),
    \end{aligned}
    \end{equation*}
that is, $N_v$ is left invariant along $a_1$. Also in this case,
we can use similar arguments to prove that $N_u$ is right
invariant along $a_2$. This finishes the proof of Lemma
\ref{lemaus}.
 \end{proof}

 \vspace{5mm}\noindent\textit{Proof of
Theorem \ref{representation1}:}  First of all, let us observe that
\eqref{dobless} follows from $\esiz f_u,N\esde = \esiz f_v,N\esde
=0$ at points of the form $(u,0)$ or $(0,v)$, and from the
left-right invariance of $N$ given by Lemma \ref{lemaus}.

In order to prove \eqref{es3}, we start by combining the structure
equations \eqref{gausfor} and \eqref{weinfor} with basic
trigonometric laws to obtain
    \begin{equation}\label{Gauss1+Weingarten1}
    \omega_1'\big(N_u - \sin(2\omega_1) f_u\big) =
    \cos(2\omega_1)\big(f_{uu} + \cos(2\omega_1) f\big).
    \end{equation}
and 
\begin{equation}\label{45es}
N_u - \sin (2\omega_1) f_u = \frac{\cos (2\omega_1)}{\sin (\omega_1+\omega_2)} \left( \cos (\omega_1 +\omega_2) f_u - f_v.\right)
\end{equation}
Now, for a fixed $v_0$ we define the curves $\Gamma_1,
\Gamma_2:\R\longrightarrow\mathbb{H}^3_1$ as
    \begin{equation*}
    \Gamma_1 (u) = f(u,v_0),\qquad\Gamma_2(u) = a_1(u)a_2(v_0).
    \end{equation*}
Next, let us construct frame along each of these two curves. It is important to observe that $\Gamma_1$ and $\Gamma_2$ do not have constant causal character. Thus, this frame that we introduce here is not the Frenet frame of the curve, and has to be constructed \emph{ad hoc}. So, consider:
    \begin{equation*}
    \left\{
    \begin{array}{ccl}
    \vec{t}_1(u) &=& \Gamma_1'(u)= f_u(u,v_0),\vspace*{2mm}\\
    \vec{n}_1(u)&=&\dfrac{\cos(\omega_1(u) + \omega_2(v_0))}{\sin(\omega_1(u) + \omega_2(v_0))}f_u(u,v_0)
    - \dfrac{1}{\sin(\omega_1(u) +
    \omega_2(v_0))}f_v(u,v_0),\vspace*{2mm}\\
    \vec{b}_1(u) &=& N(u,v_0).
    \end{array}
    \right.
    \end{equation*}
    \begin{equation*}
    \left\{
    \begin{array}{ccl}
    \vec{t}_2(u) &=& \Gamma_2'(u)= a_1'(u)a_2(v_0),\vspace*{2mm}\\
    \vec{n}_2(u)&=&\dfrac{1}{\cos(2\omega_1(u))}\Big(a_1'(u)ja_2(v_0)
    - \sin(2\omega_1(u))a_1'(u)a_2(v_0)\Big),\vspace*{2mm}\hspace*{11mm}\\
    \vec{b}_2(u) &=& a_1(u) j a_2(v_0).
    \end{array}
    \right.
    \end{equation*}
Note that the definition of $\vec{n}_2$ is valid, at first, only
at points with $\cos(2\omega_1(u))\neq 0$. However, using the left
invariancy of $N$ along $a_1$ and \eqref{45es}, we get
    \begin{equation}\label{n_2 bien definido}
    \begin{aligned}
    &\vec{n}_2(u) = \dfrac{1}{\cos(2\omega_1(u))}\Big(a_1'(u)j
    - \sin(2\omega_1(u))a_1'(u)\Big)a_2(v_0)\\
    &=\dfrac{1}{\cos(2\omega_1(u))}\Big(N_u(u,0)
    - \sin(2\omega_1(u))f_u(u,0)\Big)a_2(v_0)\\
    &=\left(
    \dfrac{\cos(\omega_1(u) + \omega_2(0))}{\sin(\omega_1(u) + \omega_2(0))}f_u(u,0)
    - \dfrac{1}{\sin(\omega_1(u) +
    \omega_2(0))}f_v(u,0)\right)a_2(v_0).\\
    \end{aligned}
    \end{equation}
Thus, $\vec{n}_2 (u)$ is actually well defined for all $u$.

We claim now that the references
$\{\vec{t}_1,\vec{n}_1,\vec{b}_1\}$ and
$\{\vec{t}_2,\vec{n}_2,\vec{b}_2\}$ coincide at $u=0$. Indeed, for
$\vec{t}_i$ and $\vec{b}_i$, this is a direct consequence of the
fact that $f_u$ and $N$ are right invariant along $a_2$. For
$\vec{n}_i$, the result follows from the second identity in \eqref{n_2 bien definido} evaluated at $u=0$, and the right invariance of $N_u$ along $a_2$:

$$\def\arraystretch{2.1}\begin{array}{lll}
\vec{n}_2 (0)& = &\displaystyle\frac{1}{\cos (2\omega_1 (0))} \left(N_u (0,0) a_2 (v_0) - \sin (2\omega_1(0)) f_u (0,0) a_2 (v_0) \right) \\ 
& = &\displaystyle\frac{1}{\cos (2\omega_1 (0))} \left(N_u (0,v_0) - \sin (2\omega_1(0)) f_u (0,v_0) \right) \\ 
& = & \left(
    \dfrac{\cos(\omega_1(0) + \omega_2(v_0))}{\sin(\omega_1(0) + \omega_2(v_0))}f_u(0,v_0)
    - \dfrac{1}{\sin(\omega_1(0) +
    \omega_2(v_0))}f_v(0,v_0)\right) \\ & = & \vec{n}_1 (0).
\end{array}$$

Finally, we are going to show that both references verify the same system of
differential equations. In the case of
$\{\vec{t}_1,\vec{n}_1,\vec{b}_1\}$ we just have to apply
\eqref{gausfor} and \eqref{weinfor} to deduce that
    \begin{equation*}
    \nabla_{\Gamma_1'}\vec{t}_1 = \big(f_{uu}(u,v_0)\big)^\top= \omega_1'(u)\vec{n}_1(u).
    \end{equation*}

    \begin{equation}\label{exs}
    \begin{aligned}
    \nabla_{\Gamma_1'}\vec{n}_1 &= \dfrac{\partial}{\partial u}\left(\dfrac{\cos(\omega_1(u) + \omega_2(v_0))}{\sin(\omega_1(u) + \omega_2(v_0))}\right)f_u(u,v_0) + \dfrac{\cos(\omega_1(u) + \omega_2(v_0))}{\sin(\omega_1(u) + \omega_2(v_0))} \big(f_{uu}(u,v_0)\big)^\top\\
    &\hspace*{5mm} - \dfrac{\partial}{\partial u}\left(\dfrac{1}{\sin(\omega_1(u) + \omega_2(v_0))}\right)
    f_v(u,v_0) - \dfrac{1}{\sin(\omega_1(u) + \omega_2(v_0))} \big(f_{vu}(u,v_0)\big)^\top\\
    &= - \omega_1'(u) f_u(u,v_0) - N(u,v_0)\\
    &= - \omega_1'(u) \vec{t}_1(u) - \vec{b}_1(u).
    \end{aligned}
    \end{equation}

    \begin{equation*}
    \begin{aligned}
    \nabla_{\Gamma_1'}\vec{b}_1 &= \big(N_u(u,v_0)\big)^\top\\
    &= \dfrac{\cos(\omega_1(u) - \omega_2(v_0))}{\sin(\omega_1(u) + \omega_2(v_0))} f_u(u,v_0) - \dfrac{\cos(2\omega_1(u))}{\sin(\omega_1(u) + \omega_2(v_0))}f_v(u,v_0)\\
    &= \sin(2\omega_1(u)) f_u(u,v_0)\\
     &\hspace*{5mm} + \cos(2\omega_1(u))\left(\dfrac{\cos(\omega_1(u) + \omega_2(v_0))}{\sin(\omega_1(u) + \omega_2(v_0))}f_u(u,v_0)
    - \dfrac{1}{\sin(\omega_1(u) +
    \omega_2(v_0))}f_v(u,v_0)\right)\\
    &= \sin(2\omega_1(u))\vec{t}_1(u) + \cos(2\omega_1(u))\vec{n}_1(u).
    \end{aligned}
    \end{equation*}

    To obtain the differential equations of the reference
    $\{\vec{t}_2,\vec{n}_2,\vec{b}_2\}$, the main idea is to use
    that $a_1''(u) = f_{uu}(u,0)$\hspace*{1pt} whenever we find the terms
    $\big(a_1''(u)\big)^\top a_2(v_0)$ or $\big(a_1''(u)\big)^\top j a_2(v_0)$. Then, we can
    apply \eqref{Gauss1+Weingarten1} and express
    $\big(f_{uu}(u, 0)\big)^\top$ in terms of
    $N_u(u,0)$ and $f_u(u,0)$. The last step is to use that $f_u(u,0) = a_1'(u)$
    and $N_u(u,0) = a_1'(u)j$ (which follows from the left-invariancy of
    $N$ along $a_1$) and rewrite all terms as products of the curves $a_1$ and $a_2$
     and their derivatives.

Using the above scheme, we have

    \begin{equation*}
    \begin{aligned}
    \nabla_{\Gamma_2'}\vec{t}_2 &= \big(a_1''(u)\big)^\top a_2(v_0) =\big(f_{uu}(u,0)\big)^\top a_2(v_0)\\
    &=\left(\dfrac{\omega_1'(u)}{ \cos(2\omega_1(u))}
    N_u(u,0) - \dfrac{\omega_1'(u)\sin(2\omega_1(u))}{\cos(2\omega_1(u))} f_u(u,0)\right)
    a_2(v_0)\\
    &= \dfrac{\omega_1'(u)}{ \cos(2\omega_1(u))} \big(a_1'(u)ja_2(v_0)
    - \sin(2\omega_1(u)) a_1'(u)a_2(v_0)\big)\\
    &= \omega_1'(u)\vec{n}_2(u).
    \end{aligned}
    \end{equation*}
Next, using \eqref{n_2 bien definido} and performing basically the
same computation that in \eqref{exs}, we get
    \begin{equation*}
    \begin{aligned}
    \nabla_{\Gamma_2'}\vec{n}_2 & = - \omega_1'(u) a_1'(u) a_2(v_0) - a_1(u) j
    a_2(v_0)\\
    &= - \omega_1'(u) \vec{t}_2(u) - \vec{b}_2(u).
    \end{aligned}
    \end{equation*}
At last,
    \begin{equation*}
    \begin{aligned}
    \nabla_{\Gamma_2'}\vec{b}_2 &= a_1'(u)j a_2(v_0)\\
    &= \cos(2\omega_1(u))
    \left(\dfrac{1}{\cos(2\omega_1(u))}
    \big(a_1'(u)ja_2(v_0) -
    \sin(2\omega_1(u))a_1'(u)a_2(v_0)\big)\right)\\
    &\hspace*{5mm} + \sin(2\omega_1(u)) a_1'(u) a_2(v_0)\\
    &= \sin(2\omega_1(u))\vec{t}_2(u) + \cos(2\omega_1(u))\vec{n}_2(u).
    \end{aligned}
    \end{equation*}

Therefore, we have proved that $\{\vec{t}_1,\vec{n}_1,\vec{b}_1\}$
and $\{\vec{t}_2,\vec{n}_2,\vec{b}_2\}$ agree at $u=0$ and verify
the same system of differential equations. Hence, we can conclude that these
two references coincide along $\R$. In particular, we deduce that
$\Gamma_1 \equiv \Gamma_2$. Since this can be
done for any $v_0$, we obtain that $f(u,v) = a_1(u)a_2(v)$ and
also that $N(u,v) = a_1(u) j a_2(v)$. This concludes the proof of
Theorem \ref{representation1}.
\hfill$\Box$

\vspace{0.4cm}

Our objective now is to study the converse of Theorem
\ref{representation1}. In other words, we wish to obtain a method
to construct flat surfaces from two given curves in $\H_1^3$,
satisfying some conditions.

Let us first note that if a curve
$a:\mathbb{R}\rightarrow\mathbb{H}^3_1$ verifies $\langle a',
aj\rangle = 0$, then, for the curve $\bar{a}$ we have $\langle
\bar{a}', j\bar{a}\rangle = 0$. This provides a simplification of
condition \eqref{dobless}. Namely, after this observation, we are
left with the problem of finding out if two given curves $a_1(u),
a_2(v): \R\flecha \H_1^3$, both verifying $\esiz a_i',a_i j\esde =
0$, always describe an isometric immersion of $\LO^2$ into $\H_1^3$
such that $f(u,0)=a_1(u)$ and $f(0,v)= \overline{a_2(v)}$ for the
characteristic parameters given in Proposition \ref{cara}. In
order to do this, we introduce the following terminology.

Let $a:\R\longrightarrow\mathbb{H}^3_1$ be a regular curve such
that $\langle a'(s), a(s) j\rangle = 0$ for all $s\in \R$. Then,
we can write $\overline{a(s)}a'(s) = \lambda(s) i + \mu (s) k$ for $\landa,\mu \in C^{\8} (\R)$.

    \begin{defi}\label{parametro_asintotico_omega}
    In the above situation, we say that $s$ is the \emph{asymptotic parameter}
    of the curve $a$
    if $\lambda(s)^2 + \mu(s)^2 = 1$. In that case, we can write
        \begin{equation}\label{definicion_omega}
        \overline{a(s)}a'(s) = \cos(\omega^a(s))i +
        \sin(\omega^a(s))k
        \end{equation}
   for some $\omega^{a}\in\mathcal{C}^{\infty}(\R)$, which is
   uniquely determined up to translations of the form $\omega^{a}
   \mapsto \omega^{a} + 2k\pi$, with $k\in\Z$.
    \end{defi}

Obviously, any curve in $\H_1^3$ with $\esiz a',a j\esde =0$ can be re-parametrized by its asymptotic parameter. With this, the following result is a converse to Theorem
\ref{representation1}, and completes the desired representation
theorem.

    \begin{teo} \label{representation2}
    Let $a_1(u), a_2(v):\R\longrightarrow \mathbb{H}^3_1$ be two regular curves,
    with $a_1(0) = 1 = a_2(0)$, satisfying:
        \begin{itemize}
            \item[i)] $\langle a_i', a_i j\rangle = 0$, for $i =
            1, 2$.
            \item[ii)] $u$ and $v$ are the asymptotic parameters of $a_1$ and $a_2$, respectively.
            \item[iii)] The functions $\omega_ 1 = \omega^{a_1}$ and $\omega_2 = \pi -\omega^{a_2}$ ($\omega^{a_i}$ as in
            Definition~\ref{parametro_asintotico_omega}) verify 
             \begin{equation}\label{cosinn}
              \sin(\omega_1(u) + \omega_2(v))>0 \hspace{1cm} \forall (u,v)\in \R^2.
             \end{equation}
            \item[iv)] The map $(x(u,v), y (u,v))$ given by \eqref{camcor} is a
            global diffeomorphism.
        \end{itemize}
    Then, $f:\R^2\longrightarrow \mathbb{H}^3_1$ defined by
        \begin{equation}\label{est4}
        f(u,v) = a_1(u)\overline{a_2(v)}
        \end{equation}
    describes an isometric immersion of $\mathbb{L}^2$ into $\mathbb{H}^3_1$,
    and $(u,v)$ are the global characteristic parameters given in Proposition~\ref{cara}.
    \end{teo}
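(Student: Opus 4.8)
The plan is to reverse-engineer the computations of Theorem~\ref{representation1}: starting from the two curves $a_1,a_2$, we build the candidate immersion $f(u,v)=a_1(u)\overline{a_2(v)}$ together with a candidate unit normal $N(u,v)=a_1(u)\,j\,\overline{a_2(v)}$, and then verify directly that the first, second and third fundamental forms of $f$ are exactly those of \eqref{tel} with the given $\omega_1,\omega_2$. Once this is done, Proposition~\ref{cara} immediately yields that $f$ is a flat timelike immersion whose asymptotic curves are the coordinate curves, and that it is an isometric immersion of $\LO^2$ into $\H_1^3$ precisely because hypothesis~$iv)$ guarantees that the change of coordinates \eqref{camcor} is a global diffeomorphism; the characteristic-parameter claim is then part of that same Proposition.

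First I would record the basic derivative formulas. From $a_1(0)=1$, condition $i)$ and Definition~\ref{parametro_asintotico_omega} we have $\overline{a_1(u)}\,a_1'(u)=\cos\omega_1(u)\,i+\sin\omega_1(u)\,k$, hence $a_1'(u)=a_1(u)\big(\cos\omega_1(u)\,i+\sin\omega_1(u)\,k\big)$; similarly, using $\omega_2=\pi-\omega^{a_2}$ and the remark that $\langle a_2',a_2 j\rangle=0$ gives $\langle\overline{a_2}',j\overline{a_2}\rangle=0$, one gets $\overline{a_2(v)}{}'=\big({-}\cos\omega_2(v)\,i+\sin\omega_2(v)\,k\big)\overline{a_2(v)}$ (the precise signs to be fixed by a careful bookkeeping of conjugation and of the substitution $\omega^{a_2}=\pi-\omega_2$). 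Then $f_u=a_1'\overline{a_2}=a_1(\cos\omega_1\,i+\sin\omega_1\,k)\overline{a_2}$ and $f_v=a_1\,\overline{a_2}{}'=a_1({-}\cos\omega_2\,i+\sin\omega_2\,k)\overline{a_2}$, and likewise $N_u,N_v$ are obtained by inserting an extra $j$ in the middle. Using the multiplication table for $1,i,j,k$ and the bi-invariance of the metric (Proposition~$iv)$ of the first Proposition, so that $\langle a_1\,\eta\,\overline{a_2},a_1\,\rho\,\overline{a_2}\rangle=\langle\eta,\rho\rangle$), every inner product $\langle f_u,f_u\rangle$, $\langle f_u,f_v\rangle$, $\langle f_v,f_v\rangle$, $\langle f_u,N\rangle$, etc., reduces to an inner product of fixed combinations of $i,j,k$ with coefficients trigonometric in $\omega_1,\omega_2$. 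A short trigonometric simplification (e.g. $\langle\cos\omega_1 i+\sin\omega_1 k,\ \cos\omega_1 i+\sin\omega_1 k\rangle=-\cos^2\omega_1+\sin^2\omega_1=-\cos 2\omega_1$) produces exactly $I$ and $III$ in \eqref{tel}; pairing $f_u,f_v$ against $N=a_1 j\overline{a_2}$ via $\langle f_u,dN\rangle$-type computations, or directly computing $\langle f_{uv},N\rangle$ using $f_{uv}=a_1(\cos\omega_1 i+\sin\omega_1 k)({-}\cos\omega_2 i+\sin\omega_2 k)\overline{a_2}$ and extracting the $j$-component, gives $II=2\sin(\omega_1+\omega_2)\,dudv$. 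One must also check that $N$ is a genuine unit normal: $\langle N,N\rangle=\langle j,j\rangle=1$ and $\langle N,f_u\rangle=\langle N,f_v\rangle=0$ follow the same way, and $\langle N,f\rangle=\langle j,1\rangle=0$; nondegeneracy of $I$ (equivalently $\sin(\omega_1+\omega_2)\neq 0$, i.e. $f$ is an immersion with $\|f_u\times f_v\|=\sin(\omega_1+\omega_2)>0$) is exactly hypothesis~$iii)$.

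With \eqref{tel} established, the rest is invoking Proposition~\ref{cara}: that Proposition asserts that such an $f$ is a flat timelike surface whose asymptotic curves are the images of the coordinate curves, that in the local coordinates \eqref{camcor} the induced metric is $-dx^2+dy^2$, and that $f$ is an isometric immersion of $\LO^2$ with $(u,v)$ the characteristic parameters exactly when \eqref{camcor} is a global diffeomorphism, which is hypothesis~$iv)$. It remains only to identify $a_1(u)=f(u,0)$ and $\overline{a_2(v)}=f(0,v)$, which is immediate from \eqref{est4} and $a_1(0)=a_2(0)=1$, so the curves $a_1,a_2$ we started from are indeed the asymptotic curves of $f$ through the base point, consistent with Theorem~\ref{representation1}.

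The main obstacle is purely bookkeeping rather than conceptual: getting all the signs right through the conjugation $\overline{a_2}$ and the shift $\omega_2=\pi-\omega^{a_2}$, since conjugation reverses products and flips the signs of $i,j,k$, and the product structure here is not that of the usual quaternions (note $j^2=k^2=1$), so the cross-product and multiplication signs must be tracked with care. The potential subtlety worth flagging is the well-definedness of the normal $N=a_1 j\overline{a_2}$ as \emph{the} unit normal compatible with the chosen orientation in Theorem~\ref{representation1} (i.e. that $\{f,f_u,f_v,N\}$ is positively oriented), which as in the proof of Lemma~\ref{lemaus} is pinned down by the sign of $\langle N_u,f_v\rangle=-\sin(\omega_1+\omega_2)<0$ coming from $iii)$; once that sign is fixed, the identity $N_u=f_u\times N$ (hence left-invariance of $N$ along $a_1$) is forced, and everything closes up.
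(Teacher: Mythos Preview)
Your proposal is correct and follows essentially the same route as the paper: express $a_1'$ and $\overline{a_2}'$ via Definition~\ref{parametro_asintotico_omega}, use bi-invariance of the metric to reduce all inner products to computations among $1,i,j,k$, identify $N=a_1 j\,\overline{a_2}$ (the paper obtains it from $f_u\times f_v$ rather than positing it as a candidate, but this is cosmetic), verify \eqref{tel}, and invoke Proposition~\ref{cara}. The one slip is the sign in your displayed formula for $\overline{a_2(v)}{}'$: conjugating $\overline{a_2}a_2'=-\cos\omega_2\,i+\sin\omega_2\,k$ gives $\overline{a_2'}\,a_2=\cos\omega_2\,i-\sin\omega_2\,k$, hence $f_v=a_1(\cos\omega_2\,i-\sin\omega_2\,k)\overline{a_2}$; but you already flag this as bookkeeping to be checked, and it does not affect the argument.
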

\begin{proof}
By definition of $\omega_1(u), \omega_2 (v)$, the condition \eqref{cosinn} and the ambiguity
in Definition \ref{parametro_asintotico_omega}, it is clear that we can suppose that $\omega_1(u) +\omega_2(v)\in (0,\pi)$. Moreover, we have
\begin{eqnarray}
    \overline{a_1(u)}a_1'(u) &=& \cos(\omega_1(u))i + \sin(\omega_1(u)) k,\label{a_1'(segunda)}\\
    \overline{a_2(v)}a_2'(v) &=& -\cos(\omega_2(v))i + \sin(\omega_2(v))k,
    \end{eqnarray}
and, conjugating the last expression,
    \begin{equation}\label{a_2'(segunda)}
    \overline{a_2'(v)}a_2(v) = \cos(\omega_2(v))i -
    \sin(\omega_2(v))k.
    \end{equation}
Hence, from \eqref{est4}, \eqref{a_1'(segunda)} and \eqref{a_2'(segunda)} we obtain

$$\esiz f_u(u,v),f_u(u,v)\esde = \esiz a_1'(u),a_1'(u)\esde =  -\cos (2\omega_1 (u)),$$
$$\esiz f_v(u,v),f_v(u,v)\esde = \esiz \overline{a_2'(v)},\overline{a_2'(v)}\esde =-\cos (2\omega_2 (v)),$$
and
    \begin{equation}
    \begin{aligned}
    \big\langle f_u&(u,v), f_v(u,v) \big\rangle =\\
        &= \big\langle \cos(\omega_1(u))i + \sin(\omega_1(u)) k \hspace*{2pt},
    \hspace*{2pt} \cos(\omega_2(v))i - \sin(\omega_2(v))k
    \big\rangle\\
    &= -\cos(\omega_1(u))\cos(\omega_2(v)) -
    \sin(\omega_1(u))\sin(\omega_2(v))\\
    &= - \cos (\omega_1(u) - \omega_2(v))
    \end{aligned}\label{F}
    \end{equation}

Now, to find the expression of the second fundamental form in
coordinates $(u,v)$, we take into account that, by \eqref{a_1'(segunda)} and \eqref{a_2'(segunda)},
    \[
    \begin{aligned}
    &f_u(u,v)\times f_v(u,v) =\\
    &=\Big(a_1(u)\big(\cos(\omega_1(u))i + \sin(\omega_1(u))
    k\big)\hspace*{1pt}\overline{a_2(v)}\Big) \times \Big(a_1(u)\big(\cos(\omega_2(v))i -
    \sin(\omega_2(v))k\big)\hspace*{1pt}\overline{a_2(v)}\Big)\\
     &= \sin(\omega_1(u) + \omega_2(v))\hspace*{3pt} a_1(u)\hspace*{1pt}
    j\hspace*{2pt} \overline{a_2(v)}
    \end{aligned}
    \]
and so, $$N(u,v) = \dfrac{f_u(u,v)\times
f_v(u,v)}{\|f_u(u,v)\times f_v(u,v)\|} = a_1(u)\hspace*{1pt}
    j\hspace*{2pt} \overline{a_2(v)}.$$
After that, we obviously get
    \begin{equation}\label{eg}
    \langle f_u(u,v),N_u(u,v)\rangle = 0
    = \langle f_v(u,v),N_v(u,v)\rangle,
    \end{equation}
and we deduce also
    \begin{eqnarray}
    \begin{aligned}
    \langle N_u(u,v), N_u(u,v)\rangle
    &= \cos(2\hspace*{2pt}\omega_1(u)),
    \end{aligned}\label{productofsfs}\\
    \begin{aligned}
    \langle N_v(u,v), N_v(u,v)\rangle &=  \cos(2\hspace*{2pt}\omega_2(v)).
    \end{aligned}\label{productoftft}
    \end{eqnarray}
Besides, it follows immediately from \eqref{a_1'(segunda)},
\eqref{a_2'(segunda)} that
\begin{equation*}
        \langle f_v(u,v), N_u(u,v)\rangle = - \sin(\omega_1(u) +
        \omega_2(v)).
        \end{equation*}
This completes the proof, using Proposition \ref{cara}.

\end{proof}

\section{The classification results}

In this section we will improve the representation formula for
flat surfaces in $\H_1^3$ in Theorem \ref{representation2}, by presenting a
geometric method to describe the curves in $\H_1^3$ verifying the
condition $\esiz a',a \, j\esde =0$. As a consequence, we will
obtain the main classification results of this paper.

Let us start by considering the unit tangent bundle to $\H^2$,
$$TU(\H^2)=\{(x,y): x\in \H^2, y\in \S_1^2, \esiz x,y\esde=0\},$$
where we are viewing here $\H^2 = \H_1^3\cap \{x_0=0\}$ and
$\S_1^2 =\S_2^3\cap \{x_0=0\}.$ With this, we can consider the map

$$\pi:\H_1^3\flecha TU (\H^2)$$ \begin{equation}\label{2eces}
x\mapsto (x \, i \, \bar{x}, x \, k\, \bar{x}) = (h_i (x),h_k (x)).
\end{equation} This map is a double covering map with $\pi (-x)=\pi (x)$ for every $x\in \H_1^3$.

From now on, let us use the notation $$h(x):= h_i (x)= x\, i \, \bar{x} :\H_1^3 \flecha \H^2.$$

\begin{defi}
A \emph{Legendrian curve} in $TU(\H^2)$ is an immersion
$\alfa=(\gamma,\nu):I\subset \R \flecha TU(\H^2)$ such that
$$\esiz \gamma', \nu\esde =0.$$
\end{defi}
Associated to such a Legendrian curve we may define the
metric $$\esiz d \alfa,d \alfa\esde_{\cS} :=\esiz
d\gamma, d\gamma\esde + \esiz d\nu, d\nu \esde.$$ As $\esiz
\gamma',\gamma'\esde \geq 0$ and $\esiz \nu',\nu'\esde \geq 0$,
and $\alfa$ is an immersion, we have that $\esiz
\alfa',\alfa'\esde_{\cS} >0$ everywhere. In particular, we may
parametrize $\alfa$ by its arclength parameter with respect to
$\esiz, \esde_{\cS}.$

In what follows, let $p_{\H^2} : TU(\H^2)\flecha \H^2$ denote the
canonical projection of $TU(\H^2)$ onto $\H^2$.

\begin{defi}
A \emph{wave front} (or simply a \emph{front}) in $\H^2$  is a
smooth map $\gamma:I\subset \R\flecha \H^2$ that lifts to a
Legendrian curve, i.e. there exists a Legendrian curve
$\alfa:I\subset \R\flecha TU(\H^2)$ such that $p_{\H^2} (\alfa) =
\gamma$. In this conditions, we call the map $\nu :I\subset
\R\flecha \S_1^2$ such that $\alfa = (\gamma,\nu)$ the \emph{unit
normal} of the front.

A \emph{closed front} in $\H^2$ is defined similarly as the
projection of a closed Legendrian curve $\alfa:\S^1 \flecha TU
(\H^2)$.
\end{defi}
It is clear that any regular curve in $\H^2$ is a front, but the
converse it not true in general. For instance, the parallel curves
of a regular curve in $\H^2$ are fronts which have singularities,
in general. Besides, there are periodic curves in $\H^2$ with singularities that are not
closed fronts with the above definition, since they do not have a globally well defined unit normal (e.g. a closed curve with exactly one cusp). For more details about fronts, see \cite{SUY,MuUm,KUY,KRSUY}.

The next lemma provides an important simplification to the
equation $\esiz a',a \, j\esde =0$.

\begin{lem}\label{lemgor}
Let $a(u):\R\flecha \H_1^3$ be a regular curve. The following
statements are equivalent:

 \begin{enumerate}
 \item[(1)]
$\esiz a'(u),a(u)\, j\esde =0$.
 \item[(2)]
$\pi (a(u)):\R\flecha TU(\H^2)$ is a Legendrian curve ($\pi$ as in
\eqref{2eces}).
 \item[(3)]
$\gamma (u):= h(a(u))$ is a front in $\H^2$ with unit normal $\nu
(u)= a(u) k \overline{a(u)}$.
 \end{enumerate}

\end{lem}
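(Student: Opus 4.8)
The plan is to prove the equivalence $(1)\Leftrightarrow(2)$ by a direct computation using the definition of $\pi$ in \eqref{2eces}, and then $(2)\Leftrightarrow(3)$ by unwinding the definitions of Legendrian curve and front. Throughout I would write $a=a(u)$, $a'=a'(u)$ and use Property $i)$ of the first Proposition together with the bi-invariance of the metric (Property $iv)$) and the conjugation rule $\overline{z_1z_2}=\overline{z}_2\,\overline{z}_1$.

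First I would establish $(1)\Leftrightarrow(2)$. Set $\gamma(u)=h_i(a(u))=a\,i\,\bar a$ and $\nu(u)=h_k(a(u))=a\,k\,\bar a$, so that $\pi(a(u))=(\gamma(u),\nu(u))$. Since $a\in\H_1^3$ we have $\bar a=a^{-1}$, and differentiating $a\bar a=1$ gives $a'\bar a=-a\bar a'$, i.e. $a'\bar a$ is pure imaginary (it equals $-\overline{a'\bar a}$). Thus I can write $a'\bar a = a(\,\overline{a}a'\,)\bar a$ and, because $\overline a a'$ is pure imaginary, expand $\overline{a}a'=\alpha i+\beta j+\delta k$ for smooth real functions $\alpha,\beta,\delta$. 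A short computation then yields
\begin{equation*}
\gamma' = a\,[\,\overline a a',\,i\,]\,\bar a,\qquad \nu' = a\,[\,\overline a a',\,k\,]\,\bar a,
\end{equation*}
where $[p,q]=pq-qp$. Using bi-invariance, $\langle\gamma',\nu\rangle = \langle[\overline a a',i],k\rangle$; computing the commutator with the multiplication table for $i,j,k$ gives $[\overline a a',i]=[\alpha i+\beta j+\delta k,\,i]$, whose $k$-component (the only part that pairs nontrivially with $k$, since $\langle j,k\rangle=\langle i,k\rangle=0$ and $\langle k,k\rangle=1$) turns out to be a multiple of $\beta$. Hence $\langle\gamma',\nu\rangle=0$ is equivalent to $\beta=0$, i.e. to $\overline a a'$ having no $j$-component. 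On the other hand, again by bi-invariance, $\langle a',aj\rangle = \langle \overline a a', j\rangle = \langle \alpha i+\beta j+\delta k,j\rangle = \beta\,\langle j,j\rangle=\beta$. Therefore $\langle a',aj\rangle=0 \iff \beta=0 \iff \langle\gamma',\nu\rangle=0$, which is exactly $(1)\iff(2)$. (I should also check that $\pi(a(u))$ is an immersion whenever $a$ is regular, which follows since $\pi$ is a local diffeomorphism.)

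For $(2)\Leftrightarrow(3)$ I would observe that, by Proposition~\ref{Hopf_fibration}, $\gamma(u)=h(a(u))=a\,i\,\bar a$ lies in $\H^2$ and $\nu(u)=a\,k\,\bar a$ lies in $\S_1^2$ with $\langle\gamma,\nu\rangle=\langle i,k\rangle=0$, so $(\gamma,\nu)$ takes values in $TU(\H^2)$ and $p_{\H^2}\circ(\gamma,\nu)=\gamma$. By definition, $(\gamma,\nu)$ being a Legendrian curve means it is an immersion with $\langle\gamma',\nu\rangle=0$; this is precisely statement $(2)$ rewritten, and then statement $(3)$ is just the statement that $\gamma$ admits the Legendrian lift $(\gamma,\nu)$ with that specified unit normal — so $(2)$ and $(3)$ say the same thing once one notes $\nu=a\,k\,\bar a$ is the prescribed normal. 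The main obstacle I anticipate is bookkeeping in the commutator computation for $\gamma'$ and $\nu'$: one must be careful that the product here is the pseudo-quaternionic product (so $j^2=k^2=+1$, not $-1$) and use the correct multiplication table, and one must correctly track that $\overline a a'$ is pure imaginary so that only its $i,j,k$ components appear. Everything else is a routine consequence of the bi-invariance property and the definitions.
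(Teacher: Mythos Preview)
Your argument is correct. The route is close in spirit to the paper's (both reduce everything to the bi-invariance of the metric and the multiplication table), but the execution differs in two places worth noting.

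First, for $(1)\Leftrightarrow(2)$ you argue symmetrically: writing $\overline a a'=\alpha i+\beta j+\delta k$ and computing $\gamma'=a[\overline a a',i]\overline a=-2\beta\,a k\overline a+2\delta\,a j\overline a$, you get $\langle\gamma',\nu\rangle=-2\beta$ and $\langle a',aj\rangle=\beta$, so both vanish exactly when $\beta=0$. The paper instead splits the two implications: for $(2)\Rightarrow(1)$ it shows $\langle a',aj\rangle=-\langle a',aj\rangle$ via the identity $ki=j$, and for $(1)\Rightarrow(2)$ it first reparametrizes by the asymptotic parameter so that $\overline a a'=\cos(\omega^a)i+\sin(\omega^a)k$. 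Your single commutator computation is a bit more economical.

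Second, for the immersion condition on $\pi\circ a$ you invoke that $\pi$ is a double cover (hence a local diffeomorphism), which the paper states just before the lemma; this is legitimate and quick. The paper instead computes explicitly $\langle\gamma',\gamma'\rangle=4\sin^2(\omega^a)$ and $\langle\nu',\nu'\rangle=4\cos^2(\omega^a)$, obtaining $\langle\alpha',\alpha'\rangle_{\mathcal S}=4$. That explicit formula is not needed for the lemma itself, but the paper uses it immediately afterwards (Remark~\ref{asintotico_arco_sasaki}) to identify the asymptotic parameter with one half of the $\langle\,,\,\rangle_{\mathcal S}$-arclength; your shortcut bypasses this by-product.
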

\begin{proof}
It is immediate from the definition of front in $\H^2$ and
\eqref{2eces} that \emph{(2)} and \emph{(3)} are equivalent. To
prove that $\emph{(2)}\Rightarrow \emph{(1)}$, assume that $$\pi
(a(u))=(a(u) i \overline{a(u)}, a(u) k \overline{a(u)}) $$ is
Legendrian. Then $a'(u)\neq 0$ and $$\esiz a'(u) i \overline{a(u)}
+ a(u) i \overline{a'(u)}, a(u) k \overline{a(u)}\esde =0.$$ Using
that $k \, i = j$ and the left-right invariance, this equation
gives $$\esiz a'(u), a(u) \, j\esde = \esiz i \overline{a'(u)}, k
\overline{a(u)}\esde = \esiz a'(u) \, i, a(u)\, k\esde = -\esiz
a'(u), a(u)\, j\esde,$$ i.e. \emph{(1)} holds.

To prove that $\emph{(1)}\Rightarrow \emph{(2)}$, we define
$\alfa(u):= \pi (a(u)) =(\gamma(u), \nu(u))$, i.e. $\gamma(u)=
a(u) i \overline{a(u)}$ and $\nu (u)= a(u) k \overline{a(u)}$. Let
us assume that $u$ is the \emph{asymptotic parameter} of $a(u)$ as
in Definition \ref{parametro_asintotico_omega}. Then, by
\eqref{definicion_omega} we have (omitting the parameter $u$ for
clarity)

\begin{equation}\label{2es1}
\bar{a} a' = \cos(\omega^a) \, i + \sin (\omega^a ) \, k.
\end{equation}
Hence, $$\esiz \gamma',\nu\esde = \esiz \bar{a} a' \, i + i
(\overline{\bar{a} a'}), k\esde =0.$$ So, to prove \emph{(2)} we
only have left to check that $\pi (a(u))$ is an immersion, i.e.
$\esiz \gamma',\gamma'\esde + \esiz \nu',\nu'\esde >0$ everywhere.
We compute
\begin{equation}\label{eee}
\def\arraystretch{1.5}\begin{array}{lll}
\esiz \gamma',\gamma'\esde & =& 2\esiz a',a'\esde + 2 \esiz  \bar{a}a' i, i(\overline{\bar{a} a'})\esde = \hspace{0.5cm} \text{ (by \eqref{2es1})} \\
& = & -2 \cos (2 \omega^a) + 2 \esiz (-\cos (\omega^a) \,1 + \sin
(\omega^a) \, j , \cos (\omega^a) 1 + \sin (\omega^a) j \esde \\ &
= & 4 \sin^2 (\omega^a).
\end{array}
\end{equation}
A similar computation using the general relations $\esiz xk,xk\esde= -\esiz x,x\esde = \esiz kx,kx\esde$ gives $\esiz \nu',\nu'\esde = 4\cos^2 (\omega^a)$, and consequently

\begin{equation}\label{2es2}
\esiz \alfa'(u),\alfa'(u)\esde_S = \esiz
\gamma'(u),\gamma'(u)\esde + \esiz \nu'(u),\nu'(u)\esde =4.
\end{equation}
This yields \emph{(2)} and completes the proof.
\end{proof}
Using this result, we may give the following definition.
\begin{defi}
Let $\gamma:I\subset \R\flecha \H^2$ be a front in $\H^2$ with
Legendrian lift $\alfa:I\subset \R\flecha TU(\H^2)$. An
\emph{asymptotic lift} of $\gamma$ is a regular curve $a:I\subset
\R\flecha \H_1^3$ such that $\pi \circ a = \alfa$, where
$\pi:\H_1^3\flecha TU(\H^2)$ is the double cover \eqref{2eces}.
\end{defi}

It is obvious that any front has an asymptotic lift, which is
unique up to sign once we fix the Legendrian lift $\alfa$ (since
$\pi$ is a double covering with $\pi (x)= \pi (-x)$). Also, by
Lemma \ref{lemgor}, the asymptotic lift of $\gamma(u)$ verifies
$h(a(u))= \gamma(u)$ and $\esiz a'(u),a(u)\, j\esde =0$.

Let us also observe that if we substitute the unit normal $\nu$ of the front $\gamma$ by $-\nu$, then the asymptotic lift $a(u)$ switches to $a(u)\, i$.

\begin{remark}\label{asintotico_arco_sasaki}
\emph{By \eqref{2es2}, we see that the asymptotic parameter
$u$ of the asymptotic lift $a(u)$ according to Definition
\ref{parametro_asintotico_omega} is one half of the arc-length
parameter w.r.t. the metric $\esiz,\esde_S$ of the
Legendrian lift $\alfa(u)$ of $\gamma(u)$.}
\end{remark}

Let $\gamma(u):I\subset \R\flecha \H^2$ be a front with unit
normal $\nu:I\subset \R\flecha \S_1^2$. If $\gamma'(u_0)\neq 0$,
its geodesic curvature at that point is $$k_g (u_0)= \frac{\esiz
\gamma''(u_0),\nu (u_0)\esde}{||\gamma'(u_0)||^2}.$$ Now, if
$\gamma'(u_0)=0$, then $\nu'(u_0)\neq 0$ around $u_0$, and we have
$\gamma'(u)=\landa (u) \nu'(u)$ for some smooth function
$\landa(u)$ defined in a neighborhood of $u_0$. Clearly, $\landa
(u_0)=0$ and $\landa = -1/k_g$ at regular points of $\gamma$. This
justifies the following definition:

\begin{defi}
Let $\gamma(u):I\subset \R\flecha \H^2$ be a front with unit
normal $\nu:I\subset \R\flecha \S_1^2$. The \emph{geodesic
curvature} of $\gamma$ is the smooth map $k_g:I\subset \R\flecha
\R\cup \{\8\}\equiv \R P^1$ given by
$$\left\{\def\arraystretch{1.7}
\begin{array}{cll} k_g (u)= \displaystyle\frac{\esiz \gamma''(u),\nu (u)\esde}{||\gamma'(u)||^2} & \text{ if } & \gamma'(u) \neq 0 \\ \8 & \text{ if } & \gamma'(u) =0.
\end{array}\right.$$
\end{defi}

The geodesic curvature of a front in $\H^2$ and the angle function
of its asymptotic lift in $\H_1^3$ are related by the following
simple formula:

\begin{lem}\label{curvatura_es_cot(omega)}
Let $a(u):I\subset \R\flecha \H_1^3$ be a regular curve in
$\H_1^3$ with $\esiz a'(u),a(u)\, j\esde =0$, where $u$ is its
asymptotic parameter. Then, the geodesic curvature $k_g(u)$ of the
front $\gamma (u)=h(a(u)):I\subset \R\flecha \H^2$ is given by
 \begin{equation}\label{eces4}
 k_g (u)= {\rm cot} (\omega^a(u)),
 \end{equation}
where $\omega^a (u)$ is the angle function of $a(u)$ (see Definition \ref{parametro_asintotico_omega}).
\end{lem}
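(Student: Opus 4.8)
The plan is to compute $\gamma''(u)$ and its inner product with $\nu(u)$ directly from the definitions $\gamma(u) = h(a(u)) = a(u)\, i\, \overline{a(u)}$ and $\nu(u) = a(u)\, k\, \overline{a(u)}$, using the asymptotic parametrization relation $\overline{a(u)}\,a'(u) = \cos(\omega^a)\,i + \sin(\omega^a)\,k$ from \eqref{2es1}. First I would differentiate $\gamma$ once: writing $\overline{a}a' = \cos(\omega^a) i + \sin(\omega^a) k =: \xi$, one has $\gamma' = a'\,i\,\overline{a} + a\,i\,\overline{a'} = a(\xi i + i\bar\xi)\overline{a}$, and since $\xi i + i\bar\xi = (\cos\omega^a i + \sin\omega^a k)i + i(\cos\omega^a(-i) + \sin\omega^a(-k))$, a short computation with the multiplication table ($i^2 = -1$, $ki = j$, $ik = -j$) gives $\gamma' = 2\sin(\omega^a)\, a\,j\,\overline{a}$. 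This already recovers $\esiz\gamma',\gamma'\esde = 4\sin^2(\omega^a)$ as in \eqref{eee}, and shows the front is singular exactly where $\sin\omega^a = 0$, consistent with $k_g = \cot\omega^a$ blowing up there.

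Next I would differentiate once more. From $\gamma' = 2\sin(\omega^a)\,a\,j\,\overline{a}$ we get
$$\gamma'' = 2(\omega^a)'\cos(\omega^a)\,a\,j\,\overline{a} + 2\sin(\omega^a)\big(a'\,j\,\overline{a} + a\,j\,\overline{a'}\big) = 2(\omega^a)'\cos(\omega^a)\,a\,j\,\overline{a} + 2\sin(\omega^a)\,a(\xi j + j\bar\xi)\overline{a}.$$
Then I pair with $\nu = a\,k\,\overline{a}$. By bi-invariance of the metric (Proposition, part $iv$), $\esiz a\,p\,\overline{a}, a\,q\,\overline{a}\esde = \esiz p, q\esde$, so $\esiz \gamma'', \nu\esde = 2(\omega^a)'\cos(\omega^a)\esiz j, k\esde + 2\sin(\omega^a)\esiz \xi j + j\bar\xi, k\esde$. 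Since $\esiz j,k\esde = 0$, the first term drops, and I only need $\esiz \xi j + j\bar\xi, k\esde$ where $\xi = \cos\omega^a i + \sin\omega^a k$. Using the table, $ij = k$, $kj = i$ (wait — from the table $jk = k\circ j = -i$ and $kj = j\circ k = i$, careful with the convention $ab = b\circ a$), one computes $\xi j + j\bar\xi$ explicitly and pairs against $k$; the expected outcome is $\esiz \gamma'', \nu\esde = 2\sin(\omega^a)\cos(\omega^a)$, possibly up to a sign that I would fix by orientation convention.

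Finally, dividing by $||\gamma'||^2 = \esiz\gamma',\gamma'\esde = 4\sin^2(\omega^a)$ yields
$$k_g(u) = \frac{2\sin(\omega^a)\cos(\omega^a)}{4\sin^2(\omega^a)} = \frac{\cos(\omega^a)}{2\sin(\omega^a)}.$$
Here I notice a factor-of-two discrepancy against the claimed $\cot(\omega^a)$; I would resolve it by being careful about the definition of $\esiz d\gamma,d\gamma\esde$ versus $\esiz d\gamma,d\gamma\esde_\cS$ — indeed Remark \ref{asintotico_arco_sasaki} records that $u$ is \emph{half} the $\esiz,\esde_\cS$-arclength, which signals that some normalization of $\gamma'$ carries a factor $2$, and tracking it consistently should collapse the formula to exactly $k_g = \cot(\omega^a)$. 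The case $\gamma'(u_0) = 0$ is handled separately but trivially: there $\sin\omega^a(u_0) = 0$, so $\omega^a(u_0) \in \pi\Z$ and $\cot(\omega^a(u_0)) = \infty = k_g(u_0)$ by the definition of $k_g$ as a map into $\R P^1$; smoothness of $k_g$ as an $\R P^1$-valued map follows from smoothness of $\omega^a$. The main obstacle is purely bookkeeping: keeping the pseudo-quaternion multiplication table (with its nonstandard $j^2 = k^2 = 1$) and the left/right placement of factors straight, together with pinning down the correct normalization constant so the answer is $\cot$ and not $\tfrac12\cot$; there is no conceptual difficulty once bi-invariance is invoked to strip the conjugating factors $a,\overline{a}$.
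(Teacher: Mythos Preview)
Your approach is essentially the same as the paper's (the paper writes $k_g = -\langle\gamma',\nu'\rangle/||\gamma'||^2$, which equals your $\langle\gamma'',\nu\rangle/||\gamma'||^2$ since $\langle\gamma',\nu\rangle\equiv 0$), and the computation you set up is correct. The problem is that you abandoned the computation at the key step and guessed the answer; the guess is off by a factor of $2$, and your proposed fix is wrong.

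Concretely: with $\xi = \cos\omega^a\, i + \sin\omega^a\, k$ and $\bar\xi = -\xi$, the multiplication table gives
\[
\xi j + j\bar\xi \;=\; \big(\cos\omega^a\, k + \sin\omega^a\, i\big) + \big(\cos\omega^a\, k + \sin\omega^a\, i\big) \;=\; 2\cos\omega^a\, k + 2\sin\omega^a\, i,
\]
so $\langle \xi j + j\bar\xi,\, k\rangle = 2\cos\omega^a$, and hence $\langle\gamma'',\nu\rangle = 2\sin\omega^a\cdot 2\cos\omega^a = 4\sin\omega^a\cos\omega^a$. Dividing by $||\gamma'||^2 = 4\sin^2\omega^a$ gives exactly $k_g = \cot\omega^a$. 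There is no factor-of-two discrepancy.

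Your attempt to repair the phantom discrepancy by invoking the $\langle\,,\,\rangle_{\cS}$ normalization from Remark~\ref{asintotico_arco_sasaki} is misguided and should be deleted: the geodesic curvature $k_g$ is defined intrinsically via the metric on $\H^2$, and the ratio $\langle\gamma'',\nu\rangle/\langle\gamma',\gamma'\rangle$ is reparametrization-invariant anyway, so no normalization of the parameter can rescale it. Just finish the algebra cleanly and the result drops out.
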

\begin{proof}
We know by \eqref{eee} that $\gamma'(u_0)=0$ if and only if $\sin
\omega^a(u_0)= 0$. Thus, \eqref{eces4} holds trivially at the
singular points of $\gamma(u)$.

For the rest of the points, we use \eqref{eee}, the equivalence
$\emph{(1)}\Leftrightarrow \emph{(3)}$ in Lemma \ref{lemgor}, and
the left-right invariance to compute $$k_g =-\frac{\esiz
\gamma',\nu'\esde}{||\gamma'||^2} = \frac{-1}{4 \sin^2 (\omega^a)}
\, \esiz \bar{a} a' \, i + i \, (\overline{\bar{a} a'}),\bar{a} a'
\, k + k \, (\overline{\bar{a} a'})\esde.$$ Using now \eqref{2es1}
and the relations $i\, k = -k \, i =-j$ and $k^2 = -i^2 =1$, we
have $$k_g = \frac{-1}{4 \sin^2 (\omega^a)} \, \esiz 2 \sin
(\omega^a ) \, j, -2 \cos (\omega^a) \, j\esde = {\rm cot}
(\omega^a),$$ as desired.
\end{proof}

    \begin{remark}\label{definicion_cot-1} \emph{Let us note that the function $
    \cot:\R \rightarrow\R P^1$ is a continuous, surjective, $\pi$-periodic covering map. This allows us to choose, on every subset
    $A\subsetneq \R P^1$ a continuous determination of ${\rm cot}^{-1}$ such that:}
        \begin{eqnarray*}
        \cot^{-1}(A) &\subset& (0, \pi) \hspace*{3mm}\text{ if } \hspace*{1mm}\infty \notin A,\\
        \cot^{-1}(A) &\subset& (\pi - c, 2\pi - c) \text{ for some } c\in (0, \pi) \hspace*{2mm}\text{ if }\hspace*{1mm}\infty \in A.
        \end{eqnarray*}
\emph{From now on, by $\cot^{-1}$ we shall mean this specific continuous determination.}
    \end{remark}

\begin{lem}\label{posun}
Let $\gamma:I\subset \R\flecha \H^2$ be a front with unit normal $\nu:I\subset \R\flecha \S_1^2$, whose geodesic curvature function $k_g:I\subset \R\flecha \R P^1$ is not surjective onto $\R P^1$ (this holds, for instance, if $\gamma$ is regular). Then, changing $\nu$ to $-\nu$ if necessary, $\gamma$ admits an asymptotic lift $a:I\subset \R\flecha \H_1^3$ such that:

\begin{enumerate}
\item
If $\gamma$ is regular, then $\sin (\omega^a)>0$. This happens with respect to the unit normal $\nu$ such that $\{\gamma',\nu\}$ is always a positively oriented basis of $T_{\gamma} \H^2$.
 \item
If $\gamma$ is not regular, then $\overline{a(u_0)}\, a'(u_0) = -i$ (i.e. $\cos (\omega^a (u_0)) = -1$) for all points with $\gamma'(u_0)=0$.
\end{enumerate}
If this holds for the unit normal $\nu$, then $\nu$ will be called the \emph{positive unit normal} of the front $\gamma$.
\end{lem}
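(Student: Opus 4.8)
The plan is to pass to an asymptotic lift of $\gamma$, read its angle function off from Lemma~\ref{curvatura_es_cot(omega)}, and then exploit the only two degrees of freedom available --- the sign of the unit normal $\nu$ and the orientation of the asymptotic parameter --- to put that angle function in the required shape. First I would use Lemma~\ref{lemgor} to obtain an asymptotic lift $a_0$ of $\gamma$, taken parametrized by its asymptotic parameter $u$ (the parametrization that is implicit in the statement through $\omega^a$), so that $\overline{a_0}\,a_0'=\cos(\omega^{a_0})\,i+\sin(\omega^{a_0})\,k$ as in Definition~\ref{parametro_asintotico_omega}. Unwinding the computation behind \eqref{eee} one gets $\gamma'=2\sin(\omega^{a_0})\,h_j(a_0)$, with $h_j(a_0)=a_0\,j\,\overline{a_0}$ nowhere zero; hence $\gamma'(u_0)=0$ if and only if $\sin(\omega^{a_0}(u_0))=0$. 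By Lemma~\ref{curvatura_es_cot(omega)}, $k_g=\cot(\omega^{a_0})$, so $\omega^{a_0}$ is a continuous lift of $k_g$ through the $\pi$-periodic covering $\cot\colon\R\to\R P^1$. Non-surjectivity of $k_g$ means $\omega^{a_0}$ avoids a whole coset $\theta_0+\pi\Z$; since $I$ is connected, $\omega^{a_0}(I)$ then lies in an open interval of length $\pi$. (Equivalently, by Remark~\ref{definicion_cot-1}, $\cot^{-1}\circ k_g$ is a well defined smooth function agreeing with $\omega^{a_0}$ modulo $\pi$.) This structural observation drives everything that follows.

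If $\gamma$ is regular, then $\gamma'$ is nowhere zero, hence $\sin(\omega^{a_0})$ is continuous and nonvanishing on the connected $I$, so of constant sign; if that sign is negative I would change $\nu$ to $-\nu$, which replaces the asymptotic lift by $a_0\,i$ (the observation recorded just before the statement), and a one-line quaternionic computation gives $\overline{a_0 i}\,(a_0 i)'=\cos(\omega^{a_0})\,i-\sin(\omega^{a_0})\,k$, i.e.\ $\omega^{a_0 i}=-\omega^{a_0}$, so the sign becomes positive. For the orientation assertion I would note that conjugation $C_a\colon w\mapsto a\,w\,\overline a$ is a linear isometry of $\R^4_2$ fixing $1^{\perp}$ (by bi-invariance, $\langle a\,w\,\overline a,1\rangle=\langle a\,w\,\overline a,a\cdot 1\cdot\overline a\rangle=\langle w,1\rangle$), hence restricting to an isometry of $\H^2=\H_1^3\cap 1^{\perp}$; it lies in the identity component of $\mathrm{Isom}(\H^2)$ because $C_1=\mathrm{id}$ and $\H_1^3$ is connected, and it sends $i\mapsto\gamma$, $j\mapsto h_j(a)$, $k\mapsto h_k(a)=\nu$. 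Therefore $\{h_j(a),h_k(a)\}$ is a positively oriented basis of $T_\gamma\H^2$, and since $\gamma'=2\sin(\omega^a)\,h_j(a)$, the frame $\{\gamma',\nu\}$ is positively oriented exactly when $\sin(\omega^a)>0$.

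If $\gamma$ is not regular, then, because $\gamma$ does have singular points, the interval of length $\pi$ that contains $\omega^{a_0}(I)$ must contain exactly one multiple of $\pi$, say $\pi k_1$, and $\omega^{a_0}$ equals $\pi k_1$ precisely at the singular points; hence $\overline{a_0(u_0)}\,a_0'(u_0)=(-1)^{k_1}\,i$ at every such point, with the \emph{same} sign throughout. If $k_1$ is odd this is $-i$ and we are done; if $k_1$ is even I would reverse the asymptotic parameter, passing to $\widehat a_0(u)=a_0(-u)$, for which $\overline{\widehat a_0(u)}\,\widehat a_0'(u)=-\overline{a_0(-u)}\,a_0'(-u)$, so the angle function becomes $\omega^{a_0}(-u)+\pi$ and meets $\pi\Z$ at $\pi(k_1+1)$, giving $\overline{\widehat a_0(u_0)}\,\widehat a_0'(u_0)=-i$ at all singular points (one may further flip $\nu$ if a sign for $\sin(\omega^a)$ is also wanted, which it is not here).

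The step I expect to need the most care is the bookkeeping of these sign ambiguities: over a fixed Legendrian lift the asymptotic lift is determined only up to sign, and replacing $\nu$ by $-\nu$ merely turns $\omega^a$ into $-\omega^a$, so it cannot change $\cos(\omega^a)$ --- one is genuinely forced to also use the orientation of the asymptotic parameter to flip the parity of $k_1$ in the non-regular case. The other point worth checking carefully is that the non-surjectivity hypothesis on $k_g$ is exactly what confines $\omega^{a_0}$ to an interval of length $\pi$, which is precisely what makes ``$\cos(\omega^{a_0})$ constant on the singular set'' true and hence makes the normalization in case (2) possible.
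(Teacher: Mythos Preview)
For the regular case your argument matches the paper's: both observe via \eqref{eee} that $\sin(\omega^a)$ is nowhere zero, and that $\nu\mapsto-\nu$ sends $a\mapsto a\,i$, hence $\omega^a\mapsto-\omega^a$, flipping the sign of $\sin(\omega^a)$. Your orientation argument through the conjugation $w\mapsto a\,w\,\overline a$ lying in the identity component of the isometry group of $\H^2$ is a clean variant of the paper's direct computation $\langle\gamma',a\,j\,\overline a\rangle=2\sin(\omega^a)$.

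For the non-regular case your route genuinely differs from the paper's, and your bookkeeping is in fact sharper. The paper asserts that ``changing $\nu$ by $-\nu$ \dots\ we may assume that $\cos(\omega^a(u_0))=-1$'' and then invokes Remark~\ref{definicion_cot-1} to propagate this to all singular points. But as you correctly compute, $\nu\mapsto-\nu$ gives $\omega^{a i}=-\omega^a$, which leaves $\cos(\omega^a)$ unchanged --- so that move alone cannot turn $\overline{a(u_0)}\,a'(u_0)=i$ into $-i$. Your remedy, reversing the asymptotic parameter so that $\omega^a\mapsto\omega^a+\pi$, is the correct extra degree of freedom to invoke, and your argument that $\omega^{a_0}(I)$ is trapped in an interval of length $\pi$ (hence meets $\pi\Z$ in at most one point) is exactly what makes the normalization uniform across all singular points.

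One caveat worth flagging: since an asymptotic lift satisfies $\pi\circ a=(\gamma,\nu)$ \emph{pointwise}, the curve $\widehat a_0(u)=a_0(-u)$ is a lift of $u\mapsto\gamma(-u)$ rather than of $\gamma$ on the given $I$. So your fix really adjusts the orientation of $I$ as well. This is harmless for the applications (Theorem~\ref{clas1}, where the parametrization is at our disposal), but it means you have in effect diagnosed a small imprecision in the statement: ``changing $\nu$ to $-\nu$ if necessary'' should, strictly, be supplemented by ``and/or reversing the orientation of $I$''.
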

\begin{proof}
If $\gamma$ is regular, then by \eqref{eee} we have $\sin
(\omega^a)\neq 0$ everywhere. Now, observe that if we change $\nu$
by $-\nu$, the asymptotic lift $a(u)$ changes to $a(u) \, i$, and
so, by \eqref{2es1}, $\sin (\omega^a)$ changes to $-\sin
(\omega^a)$.

Now, let $\nu$ denote the unit normal of $\gamma$ for which $\sin
(\omega^a)>0$ (which exists by the above explanation). Then, by
Lemma \ref{lemgor}, $\{\gamma',\nu\}$ will be a positively oriented
basis of $T_{\gamma} \H^2$ if and only if $\esiz \gamma',a\, j \,
\bar{a}\esde >0$ at every point (observe that $\{ai \bar{a}, aj \bar{a}, ak \bar{a}\}$ is always positively oriented). Now, from \eqref{2es1} we get
$$\def\arraystretch{1.4}\begin{array}{lll}
\esiz \gamma',a \,j \, \bar{a}\esde &=& \esiz a' \, i\, \bar{a} +
a\,  i\, \overline{a'},a\, j\, \bar{a}\esde = \esiz \bar{a}
a' \,i,j\esde + \esiz i \, (\overline{\bar{a} a'}),j\esde \\ &=&
-2\esiz \bar{a} a',ji\esde = 2\sin (\omega^a)>0,
\end{array}$$
what proves the claim.

Now, assume that $\gamma'(u_0)=0$ for some $u_0$. Then $\sin
(\omega^a (u_0))=0$ and, changing $\nu$ by $-\nu$ (and thus $a(u)$
by $a(u) \, i$) if necessary, we may assume that $\cos
(\omega^a(u_0))=-1$. Now, by Remark \ref{definicion_cot-1} and the hypothesis
that $k_g$ is not surjective onto $\R P^1$, the claim that $\cos
(\omega^a(u))=-1$ actually holds at every singular point of the
front $\gamma$. This concludes the proof.
\end{proof}
   \begin{defi}\label{admis}
An \emph{admissible front pair} in $\H^2$ is a pair of fronts $\gamma_1,\gamma_2:\R\flecha \H^2$ with $\gamma_1 (0)=\gamma_2(0)=i$ and $\nu_1(0)=\nu_2(0)=k$, such that
 \begin{enumerate}
 \item[i)]
$\gamma_1$ is actually a regular curve in $\H^2$.
 \item[ii)]
If $k_1,k_2:\R\flecha \R P^1$ denote the geodesic curvatures of $\gamma_1$ and $\gamma_2$, respectively, with respect to their positive unit normals, then $$k_1(u)\neq k_2(v) \hspace{1cm} \forall (u,v)\in \R^2,$$ and actually $k_1(u)>k_2(v)$ holds if $\gamma_2$ is also a regular curve.
 \end{enumerate}
 \end{defi}
We observe that if $\gamma_1,\gamma_2$ verify $k_1(\R)\cap k_2(\R) = \emptyset$, then by switching the roles of $\gamma_1$ and $\gamma_2$ if necessary, $\{\gamma_1,\gamma_2\}$ is an admissible front pair in $\H^2$.

These elements will let us describe in a very precise way the
moduli space of isometric immersions of $\LO^2$ into $\H_1^3$ in
terms of suitable pairs of curves with front-like singularities in
$\H^2$. Indeed, we have

    \begin{teo}[Classification of complete examples]\label{clas1}
        Let $\gamma_1 (u),\gamma_2(v):\R\flecha \H^2$ be an admissible front pair in $\H^2$, where $u/2$ (resp. $v/2$) is the arc-length parameter of $\gamma_1$ (resp. $\gamma_2$) with respect to the metric $\esiz,\esde_{\mathcal{S}}$. 

Let $k_1(u), k_2(v):\R\rightarrow \R P^1$ and $a_1(u), a_2(v):\R\rightarrow \H^3_1$ denote, respectively, the geodesic curvatures and asymptotic lifts of $\gamma_1$ and $\gamma_2$ with respect to their positive unit normals. Assume that:
 \begin{itemize}
 \item For $\omega_1 (u):= \cot^{-1}(k_1(u))$ and $\omega_2 (v):= \pi - \cot^{-1}(k_2(v))$, the map $(x(u,v), y (u,v))$ defined in \eqref{camcor} is a global diffeomorphism.
            \end{itemize}
  Then, $f:\R^2\longrightarrow \mathbb{H}^3_1$ given by $f(u,v) = a_1(u)\overline{a_2(v)}$
        is an isometric immersion of $\mathbb{L}^2$ into $\mathbb{H}^3_1$,
        and $(u,v)$ are the global characteristic parameters given in Proposition~\ref{cara}.

        Conversely, every isometric immersion of $\mathbb{L}^2$ into $\mathbb{H}^3_1$ can be recovered by this process from an admissible front pair in $\H^2$.
    \end{teo}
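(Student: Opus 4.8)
The plan is to prove Theorem~\ref{clas1} by reducing it, in both directions, to the representation Theorem~\ref{representation2}, using the dictionary between curves $a$ in $\H_1^3$ with $\langle a', a\,j\rangle=0$ and fronts in $\H^2$ established in Lemmas~\ref{lemgor} and \ref{curvatura_es_cot(omega)}. For the direct implication, I would start from the admissible front pair $\gamma_1,\gamma_2$ with their positive unit normals $\nu_1,\nu_2$, take the asymptotic lifts $a_1,a_2:\R\flecha\H_1^3$ normalised so that $a_1(0)=a_2(0)=1$ (this is possible since $\gamma_i(0)=i$, $\nu_i(0)=k$, so $\pi(1)=(i,k)=(\gamma_i(0),\nu_i(0))$ and the two lifts through $1$ are $\pm$ of each other). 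By Remark~\ref{asintotico_arco_sasaki} and the hypothesis that $u/2$, $v/2$ are the $\esiz,\esde_{\mathcal S}$-arclength parameters, $u$ and $v$ are precisely the asymptotic parameters of $a_1$ and $a_2$ in the sense of Definition~\ref{parametro_asintotico_omega}, so hypothesis~(ii) of Theorem~\ref{representation2} holds. Hypothesis~(i) is exactly the equivalence $(1)\Leftrightarrow(3)$ of Lemma~\ref{lemgor}. By Lemma~\ref{curvatura_es_cot(omega)} we have $k_i=\cot(\omega^{a_i})$, hence $\omega^{a_1}=\cot^{-1}(k_1)=\omega_1$ and $\omega^{a_2}=\cot^{-1}(k_2)=\pi-\omega_2$ modulo the $2k\pi$ ambiguity, so $\omega_1=\omega^{a_1}$ and $\omega_2=\pi-\omega^{a_2}$, matching the normalisation in hypothesis~(iii) of Theorem~\ref{representation2}. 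Condition~\eqref{cosinn}, i.e. $\sin(\omega_1(u)+\omega_2(v))>0$, must be extracted from admissibility: since $k_1(u)\neq k_2(v)$ everywhere and $\cot^{-1}$ is chosen as the continuous determination of Remark~\ref{definicion_cot-1}, one checks that $\omega_1(u)-(\pi-\omega_2(v))=\cot^{-1}(k_1(u))-\cot^{-1}(k_2(v))$ never vanishes and, by a connectedness/sign argument based at $(0,0)$ together with the $k_1>k_2$ clause when $\gamma_2$ is regular, that $\omega_1(u)+\omega_2(v)$ stays in $(0,\pi)$; this is where Lemma~\ref{posun} (the positivity of the unit normal) is used to pin down the branch. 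Finally, hypothesis~(iv) of Theorem~\ref{representation2} is literally the hypothesis of Theorem~\ref{clas1} that \eqref{camcor} is a global diffeomorphism. Thus Theorem~\ref{representation2} applies and yields that $f(u,v)=a_1(u)\overline{a_2(v)}$ is the desired isometric immersion with $(u,v)$ the characteristic parameters.

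For the converse, given an isometric immersion $f:\LO^2\flecha\H_1^3$, I would first normalise by a left translation and a rotation of the normal so that $f(0,0)=1$ and $N(0,0)=j$, then invoke Proposition~\ref{cara} to obtain the global characteristic parameters $(u,v)$ and the functions $\omega_1(u),\omega_2(v)$ with $\omega_1+\omega_2\in(0,\pi)$, and Theorem~\ref{representation1} to write $f(u,v)=a_1(u)a_2(v)$, $N(u,v)=a_1(u)\,j\,a_2(v)$, where $a_1(u)=f(u,0)$, $a_2(v)=f(0,v)$ satisfy \eqref{dobless}. Setting $\tilde a_2(v):=\overline{a_2(v)}$, the identity $\langle a_2',ja_2\rangle=0$ becomes $\langle\tilde a_2',\tilde a_2\,j\rangle=0$, so both $a_1$ and $\tilde a_2$ are curves of the type handled by Lemma~\ref{lemgor}, and $f(u,v)=a_1(u)\overline{\tilde a_2(v)}$... wait, one must be careful: $f=a_1 a_2=a_1\overline{\tilde a_2}$, consistent with the form in Theorem~\ref{representation2}. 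Define $\gamma_1:=h\circ a_1$ and $\gamma_2:=h\circ\tilde a_2$, which by Lemma~\ref{lemgor} are fronts in $\H^2$ with $\gamma_i(0)=h(1)=i$ and unit normals $a_1 k\overline{a_1}$, $\tilde a_2 k\overline{\tilde a_2}$ equal to $k$ at $0$. One then has to verify that, after possibly replacing $\nu_i$ by $-\nu_i$ and accordingly $a_i$ by $a_i i$ (Lemma~\ref{posun}), these are the positive unit normals, that $u/2$ and $v/2$ are the Sasaki-arclength parameters (Remark~\ref{asintotico_arco_sasaki} again, since $u,v$ are asymptotic parameters — this uses that the parametrisation coming from Proposition~\ref{cara} is indeed the asymptotic-parameter one, which follows from \eqref{a_1'(segunda)}-type identities, i.e. $\overline{a_1}a_1'=\cos\omega_1\,i+\sin\omega_1\,k$), and that the pair is admissible: $k_1=\cot\omega^{a_1}=\cot\omega_1$ and $k_2=\cot\omega^{\tilde a_2}$ with $\omega^{\tilde a_2}=\pi-\omega_2$, so $k_2=\cot(\pi-\omega_2)=-\cot\omega_2$... actually $k_2=\cot(\omega^{\tilde a_2})=\cot(\pi-\omega_2)$; since $\omega_1+\omega_2\in(0,\pi)$, one deduces $\omega_1(u)\neq\pi-\omega_2(v)$ i.e. $\cot\omega_1(u)\neq\cot(\pi-\omega_2(v))$, hence $k_1(u)\neq k_2(v)$, giving condition~(ii) of Definition~\ref{admis}, with the strict inequality in the regular case following from $\omega_1,\omega_2\in(0,\pi)$ and monotonicity of $\cot$ on $(0,\pi)$. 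Condition~(i) of Definition~\ref{admis} ($\gamma_1$ regular) holds because $\gamma_1'\neq0$ would fail only where $\sin\omega^{a_1}=0$ by \eqref{eee}, but $\omega_1(u)+\omega_2(v)\in(0,\pi)$ forces... hmm, this only gives $\sin(\omega_1+\omega_2)>0$, not $\sin\omega_1\ne0$ — so regularity of $\gamma_1$ rather reflects the asymmetry in the roles of $a_1$ and $\tilde a_2$ and should instead be argued from the fact that, in Theorem~\ref{representation1}, $a_1$ is an honest asymptotic curve of an immersion hence $f_u=a_1'\,a_2\ne0$, so $a_1'\ne0$; but front-regularity of $\gamma_1=h\circ a_1$ needs $\sin\omega^{a_1}\ne 0$, which is \emph{not} automatic. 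I would therefore handle this by noting that one of the two asymptotic curves can always be chosen regular after swapping: more precisely, the construction is symmetric under exchanging the two coordinate curves, and at least one of the two fronts $h\circ a_1$, $h\circ\tilde a_2$ is regular — if both had singular points one would get a contradiction with $\sin(\omega_1+\omega_2)>0$ at a point where both $\sin\omega_1$ and $\sin\omega_2$ vanish, which cannot be avoided simultaneously; relabel so that the regular one is $\gamma_1$.

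The main obstacle, as the discussion above signals, is the careful bookkeeping of branches and signs: pinning down that the continuous determination $\cot^{-1}$ of Remark~\ref{definicion_cot-1} applied to $k_1$ and the shifted version applied to $k_2$ reproduces exactly the $\omega_1,\omega_2$ with $\omega_1+\omega_2\in(0,\pi)$, and that the ``positive unit normal'' normalisation of Lemma~\ref{posun} is compatible with the orientation conventions ($N(0,0)=j$, $\{f,f_u,f_v,N\}$ positively oriented) fixed earlier. Translating $k_1(u)\neq k_2(v)$ into \eqref{cosinn} is the technical heart: one needs that the non-vanishing of $\cot^{-1}(k_1(u))-\cot^{-1}(k_2(v))$, plus the correct behaviour of $\cot^{-1}$ near $\infty$ at singular points (where by Lemma~\ref{posun}(2) one has $\cos\omega^{a_i}=-1$), forces $\omega_1(u)+\omega_2(v)$ to remain in $(0,\pi)$ on all of $\R^2$, a connected set, starting from the value $\omega_1(0)+\omega_2(0)$ which one computes to lie in $(0,\pi)$ from the initial conditions. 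I expect this branch-tracking argument, together with the regularity-of-$\gamma_1$ point, to be the only genuinely delicate part; everything else is a direct translation through the lemmas of Section~5 into the hypotheses of Theorem~\ref{representation2} and back.
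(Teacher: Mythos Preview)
Your proposal is correct and follows essentially the same route as the paper: reduce both directions to Theorem~\ref{representation2} via Lemmas~\ref{lemgor}, \ref{curvatura_es_cot(omega)}, \ref{posun} and Remark~\ref{asintotico_arco_sasaki}, with the branch-tracking argument (case split on whether $\gamma_2$ is regular, connectedness of $\R^2$) being exactly what the paper does to obtain $\omega_1+\omega_2\in(0,\pi)$. Your converse is in fact more carefully worked out than the paper's two-line version --- in particular your worry that regularity of $\gamma_1$ is not automatic is legitimate, and your resolution (if both $\gamma_1,\gamma_2$ were singular then $\omega_1(u_0),\omega_2(v_0)\in\pi\Z$ for some $u_0,v_0$, contradicting $\omega_1(u_0)+\omega_2(v_0)\in(0,\pi)$; swapping, i.e.\ conjugating $f$, then makes $\gamma_1$ the regular one) is correct and is a detail the paper leaves implicit.
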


    \begin{proof}
        For the direct part, we just have to show that $a_1(u)$ and $a_2(v)$ verify the
        hypotheses of Theorem~\ref{representation2}. Since they are the asymptotic lifts
        of the curves $\gamma_i$, Lemma~\ref{lemgor} tells us that
        $\langle a_1', a_1 j \rangle = \langle a_2', a_2 j \rangle= 0$ and, by
        Remark~\ref{asintotico_arco_sasaki},
        we know that $u$ and $v$ are the asymptotic parameters of $a_1$ and $a_2$. Also by Lemma \ref{lemgor} and the sign ambiguity of the asymptotic lift, we may assume that $a_1(0)=a_2(0)=1$.

        Now, observe that condition $ii)$ in Definition \ref{admis} implies, in particular, that both $k_1(\R), k_2(\R)\subsetneq \R P^1$. So, by Remark \ref{definicion_cot-1} the functions $\cot^{-1}(k_1(u))$ and $\cot^{-1}(k_2(v))$ make sense. 

Let $\omega^{a_1}$ (resp. $\omega^{a_2}$) denote the angle function associated to $a_1$ (resp. $a_2$). As $\gamma_1$ is regular, by Lemma \ref{posun} and the $2\pi k$-ambiguity in defining $\omega^{a_i}$, we may assume that $\omega^{a_1} (\R)\subset (0,\pi)$. Thus, by Lemma \ref{curvatura_es_cot(omega)} and the above comments we have $$\omega^{a_1} (u)=\cot^{-1} (k_1(u)) \in (0,\pi),$$ and similarly, $$\omega^{a_2} (v)= \cot^{-1} (k_2(v)),$$ where $\omega^{a_2} (\R) \subset (0,\pi)$ if $\gamma_2$ is regular, and $\omega^{a_2} (\R) \subset (\pi-c,2\pi-c)$ for some $c>0$ if $\gamma_2$ has some singular point.

Define now $\omega_1(u) = \cot^{-1}(k_1(u))$ and $\omega_2(v) = \pi - \cot^{-1}(k_2(v))$. If we prove that $\omega_1(u) +\omega_2(v)\in (0,\pi)$ for all $(u,v)\in \R^2$, all conditions of Theorem \ref{representation1} will be fulfilled, as we wished.

In case that $\gamma_2$ is regular, we clearly have $\omega_1(u) +\omega_2(v)>0$, and as $k_1(u)>k_2(v)$ for every $(u,v)$, we conclude that $$\cot^{-1} (k_1(u)) -\cot^{-1} (k_2(v)) <0,$$ i.e. $\omega_1 (u)+ \omega_2(v)<\pi$, as desired.

In case that $\gamma_2$ has some singular point, it is clear that $\omega_1 (u_0)+\omega_2 (v_0) \in (0,\pi)$ for some adequate $(u_0,v_0)\in \R^2$. Once we know that, it is also clear that $\omega_1(u)+ \omega_2(v)\neq \{0,\pi\}$ at every point, since otherwise the condition $k_1(u)\neq k_2(v)$ would not hold everywhere. So, again, $\omega_1 (u)+\omega_2(v)\in (0,\pi)$ for all $(u,v)\in \R^2$. This finishes the first part of the proof.

        For proving the converse part of the theorem we recall that, from Theorem~\ref{representation1},
        we already know that every isometric immersion of $\mathbb{L}^2$ into $\mathbb{H}^3_1$
        can be put in the form $f(u,v) = a_1(u)a_2(v)$. Thus, taking
        $\gamma_1(u) = p_{\H^2}\circ\pi(a_1(u))$, $\gamma_2(v) = p_{\H^2}\circ\pi\big(\hspace*{1pt}\overline{a_2(v)}\hspace*{1pt}\big)$, we can recover the immersion $f$ by applying the direct part to the curves $\gamma_1$ and $\gamma_2$.
    \end{proof}

Let us now consider a Lorentzian flat surface $\Sigma$ which is
compact and orientable. Then $\Sigma$ is a torus and its universal
covering $\tilde{\Sigma}$ is a plane. The next classification
result establishes which isometric immersions of $\LO^2$ into
$\H_1^3$ in Theorem \ref{clas1} are the universal covering of some
Lorentzian flat torus in $\H_1^3$. This provides then a
description of the moduli space of Lorentzian flat tori in
$\H_1^3$.

\begin{teo}[Classification of flat tori]\label{clas2}
Let $\gamma_1,\gamma_2:\S^1\flecha \H^2$ be two closed fronts in $\H^2$ with $\gamma_1 (p_0)=\gamma_2(p_0)=i$ and $\nu_1 (p_0)=\nu_2(p_0)=k$ for some $p_0\in \S^1$ (here $\nu_i$ is the positive unit normal of $\gamma_i$). Assume that
  \begin{equation}\label{regc}
k_1 (\S_1) \cap k_2 (\S_1) = \emptyset,
  \end{equation}
where here $k_i$ is the geodesic curvature of $\gamma_i$ in
$\H^2$. Then, permuting $\gamma_1$ and $\gamma_2$ if necessary, the Lorentzian flat surface in $\H_1^3$ that they
generate via Theorem \ref{clas1} has compact image, and describes
therefore a Lorentzian flat torus isometrically immersed in
$\H_1^3$.

Conversely, every Lorentzian flat torus of $\H_1^3$ can be
constructed following the process described in Theorem
\ref{clas1}, starting with a pair of closed fronts
$\gamma_1,\gamma_2$ in $\H^2$ satisfying the regularity condition
\eqref{regc}.

\end{teo}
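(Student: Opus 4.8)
The plan is to derive both implications from the classification of complete examples (Theorem~\ref{clas1}): the direct part by verifying its hypotheses and observing that closedness of the fronts makes the resulting immersion factor through a torus, and the converse by analysing the deck group in the characteristic parameters.

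\emph{Direct part.} Suppose $\gamma_1,\gamma_2\colon\S^1\to\H^2$ are closed fronts with $k_1(\S^1)\cap k_2(\S^1)=\emptyset$. If both had a singular point we would get $\infty\in k_1(\S^1)\cap k_2(\S^1)$, so at least one, say $\gamma_1$, is regular, and by the observation following Definition~\ref{admis}, after permuting if necessary $\{\gamma_1,\gamma_2\}$ is an admissible front pair. Since the $\gamma_i$ are closed and each $k_i\colon\S^1\to\R P^1$ omits a point of $\R P^1$, the functions $\omega_1=\cot^{-1}(k_1)$ and $\omega_2=\pi-\cot^{-1}(k_2)$ (with the branch of Remark~\ref{definicion_cot-1}) are well defined and \emph{periodic} on $\R$. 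As in the proof of Theorem~\ref{clas1}, admissibility gives $\omega_1(u)+\omega_2(v)\in(0,\pi)$ for all $(u,v)$; being continuous and periodic this sum attains its extrema, so $0<c_1\le\omega_1(u)+\omega_2(v)\le c_2<\pi$, which is condition \eqref{st0}, hence \eqref{camcor} is a global diffeomorphism by Proposition~\ref{cara} and Theorem~\ref{clas1} applies: $f(u,v)=a_1(u)\overline{a_2(v)}$ is an isometric immersion of $\LO^2$ into $\H_1^3$ with characteristic parameters $(u,v)$. Now let $L_i$ be the period of the closed Legendrian lift $\pi\circ a_i$ of $\gamma_i$; since $\pi$ is a double cover, $a_i(\cdot+L_i)=\varepsilon_i\,a_i(\cdot)$ with a constant $\varepsilon_i\in\{\pm1\}$, so $f(u+L_1,v)=\varepsilon_1 f(u,v)$ and $f(u,v+L_2)=\varepsilon_2 f(u,v)$, and $f$ is invariant under the rank-two lattice $\Lambda=\{(mL_1,nL_2):\varepsilon_1^m\varepsilon_2^n=1\}\subset\R^2$. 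As the first fundamental form in \eqref{tel} is $\Lambda$-invariant too ($\omega_1,\omega_2$ being periodic), $f$ descends to an isometric immersion of the flat Lorentzian torus $\R^2/\Lambda$ into $\H_1^3$; its image is $f(\R^2)$, which is therefore compact.

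\emph{Converse.} Let $F\colon T\to\H_1^3$ be an isometric immersion of a Lorentzian flat torus and $f\colon\LO^2\to\H_1^3$ the induced immersion on the universal cover, equivariant under the deck group $\Gamma\cong\Z^2$, which acts on $\LO^2$ by orientation-preserving isometries. Since any orientation-preserving isometry of $\LO^2$ with nontrivial linear part has a fixed point, while $\Gamma$ acts freely, $\Gamma$ acts by translations. After composing $F$ with an isometry of $\H_1^3$ we may assume $f(0,0)=1$, $N(0,0)=j$, so Theorem~\ref{clas1} gives $f=a_1(u)\overline{a_2(v)}$, $N=a_1(u)\,j\,\overline{a_2(v)}$, for an admissible front pair $\gamma_1=h(a_1)$, $\gamma_2=h(\overline{a_2})$; it suffices to show $\gamma_1,\gamma_2$ are closed fronts, as then $k_1(\S^1)\cap k_2(\S^1)=\emptyset$ is precisely condition $ii)$ of Definition~\ref{admis}. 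First, no $\sigma\in\Gamma$ interchanges the asymptotic families: writing out such a (translation-type) $\sigma$ in terms of \eqref{camcor} forces $\omega_2(u+c)=-\omega_1(u)+2\pi m$ for a constant $c$ and an integer $m$, hence $\omega_1(u)+\omega_2(u+c)\equiv 2\pi m$, incompatible with $\omega_1+\omega_2\in(0,\pi)$. So $\sigma(u,v)=(\phi(u),\psi(v))$. Separating variables in $f\circ\sigma=f$ gives $\overline{a_1(u)}\,a_1(\phi(u))=\overline{a_2(v)}\,a_2(\psi(v))=:c_\sigma$, a constant in $\H_1^3$; then $N\circ\sigma=N$ forces $c_\sigma j c_\sigma^{-1}=j$, i.e.\ $c_\sigma=\pm e^{sj}$. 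Moreover $\sigma$ is a translation in the flat coordinates, so differentiating $\Phi\circ\sigma=\Phi+(\mathrm{const})$ (with $\Phi$ as in \eqref{camcor}) yields $\cos\omega_1(\phi(u))\phi'(u)=\cos\omega_1(u)$ and $\sin\omega_1(\phi(u))\phi'(u)=\sin\omega_1(u)$, hence $\phi'(u)^2=1$; and $\phi'\equiv-1$ would force $\sigma^2=\mathrm{id}$ (the sign of $\psi'$ equals that of $\phi'$ by orientation), impossible for $\sigma\neq\mathrm{id}$ in the torsion-free $\Gamma$, so $\phi'\equiv 1$, $\phi(u)=u+a_\sigma$, $\psi(v)=v+b_\sigma$, with $\omega_1(u+a_\sigma)\equiv\omega_1(u)$ and $\omega_2(v+b_\sigma)\equiv\omega_2(v)$ (the $2\pi$-ambiguity removed by boundedness of $\omega_i$). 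Then $\overline{c_\sigma}(\cos\omega_1(u)\,i+\sin\omega_1(u)\,k)c_\sigma=\cos\omega_1(u)\,i+\sin\omega_1(u)\,k$ for all $u$, so unless $\omega_1$ is constant (the Lorentzian Hopf cylinder case, checked directly) these vectors span $\mathrm{span}\{i,k\}$, conjugation by $c_\sigma$ fixes this Lorentzian plane pointwise, and $c_\sigma=\pm1$. Hence $a_1(u+a_\sigma)=\pm a_1(u)$ and $a_2(v+b_\sigma)=\pm a_2(v)$, so the $\gamma_i=h(a_i)$ and their unit normals $\nu_i=a_i\,k\,\overline{a_i}$ are periodic. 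Finally $\sigma\mapsto(a_\sigma,b_\sigma)$ is an injective homomorphism $\Gamma\cong\Z^2\to\R^2$ whose image, by proper discontinuity, is a discrete subgroup, hence a rank-two lattice; so $a_\sigma\neq0$ for some $\sigma$ and $b_\sigma\neq0$ for some $\sigma$, and $\gamma_1,\gamma_2$ are genuine closed fronts.

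\emph{Main obstacle.} The delicate step is the converse, specifically showing that every deck transformation is a \emph{translation in the characteristic coordinates}. This is exactly where the varying causal character of the asymptotic curves bites: the vector $\cos\omega_1\,i+\sin\omega_1\,k$ is timelike, spacelike or null at different parameter values, so one cannot run the naive arc-length argument and must instead combine the constraint from $f$ (identifying the cocycle with a constant $c_\sigma$), the constraint from $N$ (pinning $c_\sigma$ to the $j$-axis), the fact that $\sigma$ is a Euclidean translation in the flat coordinates (giving $|\phi'|=1$), and the absence of torsion in $\Gamma$ (killing $\phi'=-1$). Once this is in place, the exceptional configurations where $\omega_1$ or $\omega_2$ is constant — the Lorentzian Hopf cylinders — are dealt with by direct inspection.
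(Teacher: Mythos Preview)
Your direct part is correct and, in fact, more carefully argued than the paper's: you verify condition \eqref{st0} explicitly via compactness, whereas the paper simply asserts that the first part is ``immediate'' from the closedness of the asymptotic lifts.

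For the converse you take a genuinely different route from the paper. The paper constructs the maps
\[
\beta_1(u)=a_1(u)\,j\,\overline{a_1(u)}=(N\bar f)(u,v),\qquad
\beta_2(v)=a_2(v)\,j\,\overline{a_2(v)}=-(\bar N f)(u,v),
\]
observes that they are globally defined on the torus $\Sigma$ (being built from $f$ and $N$), hence have compact image; a short pseudo-quaternionic computation shows they are regular. Then $G=(\beta_1,\beta_2):\Sigma\to\S_1^2\times\S_1^2$ is a local diffeomorphism onto a torus, hence a finite cover, and the asymptotic curves on $\Sigma$ are exactly the $G$-lifts of the factor curves, so they are closed. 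This is short and avoids any case analysis.

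Your approach --- analysing the deck group directly in the characteristic coordinates --- is valid and yields the same conclusion, but is heavier: you must rule out the swap of asymptotic families, force $\phi'\equiv 1$ via torsion-freeness, pin down $c_\sigma$ through the $N$-equation, and then treat separately the case where some $\omega_i$ is constant. That last step (``checked directly'') is not entirely trivial: when $\omega_1$ is constant the curve $a_1$ is a one-parameter subgroup $e^{u\rho_1}$, and one must argue that if $\rho_1$ were spacelike or null then $a_\sigma=0$ for every $\sigma$, contradicting the rank-two lattice property, before concluding that $\gamma_1$ is a geodesic circle in $\H^2$. Your sketch is recoverable, but the paper's $\beta_1,\beta_2$ trick sidesteps all of this by producing, in one stroke, two global torus-valued maps whose level sets are precisely the asymptotic curves. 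What your approach buys is that it stays entirely within the coordinate framework of Sections~3--4 and makes the lattice structure of the deck group completely explicit; what the paper's approach buys is brevity and the elimination of the Hopf-cylinder case distinction.
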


\begin{proof}
The first part is immediate, taking into account that if
$\gamma_i$ is a closed front with unit normal $\nu_i$, then $\alfa_i
:=(\gamma_i, \nu_i)$ is regular and closed in
$TU(\H^2)$, and as $\pi$ in \eqref{2eces} is a double covering, it follows
that $a_i:=\pi^{-1} ( \alfa_i)$ will be a closed curve
in $\H_1^3$. With this, $f=a_1 \overline{a_2}$ is the product of
two closed curves in $\H_1^3$, and thereby it is compact with the
topology of a torus.

Conversely, let $\Sigma$ denote a flat Lorentzian torus in
$\H_1^3$, let $\tilde{\Sigma} \equiv \LO^2$ denote its universal
covering, and $p:\tilde{\Sigma}\flecha \Sigma$ the canonical
covering map. So, we shall regard $\tilde{\Sigma}$ in the obvious
way as a complete Lorentzian flat surface isometrically immersed
in $\H_1^3$, with second fundamental form $\tilde{II}$ given by
$p^* (II)=\tilde{II}$, where $II$ stands for the second
fundamental form of the torus $\Sigma$. In these conditions, by
Theorem \ref{clas1}, we can parametrize $\tilde{\Sigma}$ as an
immersion $f(u,v):\R^2\flecha \H_1^3$ such that $$f(u,v)=a(u) \,
\overline{b(v)}, \hspace{1cm} N(u,v)= a(u) \, j \,
\overline{b(v)}.$$ Here we have assumed that, up to a rigid
motion, $f(0,0)= 1$ and $N(0,0)=j$.

Let us consider next the map $$N \, \bar{f} :\tilde{\Sigma}\flecha
\S_1^2.$$ It is obvious that $N \, \bar{f}$ is well defined in
$\Sigma$, and thus $N \, \bar{f} (\Sigma)= N \, \bar{f}
(\tilde{\Sigma})$ is compact in $\H^2$. Moreover, in terms of the
parameters $(u,v)$ we have $$(N \, \bar{f}) (u,v)= a(u) \, j
\, \overline{a(u)},$$ and hence $N \, \bar{f} (\tilde{\Sigma})$ is
a closed curve in $\S_1^2$. We prove next that it is also regular.

Let us denote $\beta_1 (u):= a(u) \,j \,
\overline{a(u)}:\R\flecha \S_1^2$. Then, using the basic properties
of the pseudo-quaternionic model for $\H_1^3$, we have

$$\def\arraystretch{1.3}\begin{array}{lll}
\overline{\beta_1}\, \beta_1' & = & - a \,j\, \bar{a} (a'
\,j\, \bar{a} + a\, j\, \overline{a'})
 =  -a \,j\, \bar{a}\, a'\, j\, \bar{a} - a\, \overline{a'} \\
 & = & -a\, j\, \bar{a}\, a\, j\, \overline{a'} - a\, \overline{a'} \hspace{1cm} (\text{ since } \esiz a',a \, j \esde =0 \Rightarrow {\rm Re } (a' \, j \, \bar{a})=0) \\
  & = & -2 a \,\overline{a'} \neq 0.
\end{array} $$ Therefore, $\beta_1(u)$ is a regular curve, which is also closed.

In the same way, we can define $$-\bar{N} \, f
:\tilde{\Sigma}\flecha \S_1^2,$$ and the process above shows that
the curve $\beta_2(v):= b(v) \, j\, \overline{b(v)}
:\R\flecha \S_1^2$ is a closed regular curve.

It is important to remark that, by its own construction, the
curves $\beta_i$ may be seen as defined on the flat torus
$\Sigma$. Consider next the map
$$G=(\beta_1,\beta_2):\Sigma\flecha \S_1^2\times \S_1^2.$$ It is
obvious that $G$ is a local diffeomorphism, and $G(\Sigma)\equiv
\beta_1 \times \beta_2 \subset \S_1^2\times \S_1^2$ is a (flat) torus.
Thus, by compactness, $G$ is a finite folded covering map. In this
way, the lift to $\Sigma$ of each curve of the form $\Gamma:=
\beta_1\times \{p\}$ or $\Gamma:=\{p\}\times \beta_2$ of the torus
$\beta_1\times \beta_2$ is a closed curve in $\Sigma$.

In addition, it is clear from the definition of $\beta_1,\beta_2$ that a
curve $\tilde{\alfa}$ is an asymptotic curve on $\tilde{\Sigma}$
(if and only if $\alfa = p \circ \tilde{\alfa}$ is an asymptotic
curve on $\Sigma$) if and only if $\tilde{G_i}\circ \tilde{\alfa}$
is constant for some $i=1,2$, where by definition $\tilde{G_i}=
G_i \circ p$. Thus, $\alfa$ is an asymptotic curve on $\Sigma$ if
and only if $G_i\circ \alfa$ is constant for some $i=1,2$, i.e. if
and only if $\alfa$ is the lift via the finite fold covering $G$
of a curve of the form $\Gamma:= \beta_1\times \{p\}$ or
$\Gamma:=\{p\}\times \beta_2$ on $\beta_1\times \beta_2$.

To sum up, we have proved the fundamental fact that \emph{the
asymptotic curves of a Lorentzian flat torus in $\H_1^3$ are
closed}. In particular, the Hopf projection into $\H^2$ of such an
asymptotic curve is a closed front. This fact together
with the converse part of Theorem \ref{clas1} proves that every
Lorentzian flat torus in $\H_1^3$ can be reconstructed by means of
two closed fronts in $\H^2$ verifying the regularity
condition \eqref{regc}. This completes the proof.
\end{proof}

\begin{remark}
\emph{Theorem \ref{clas2} constitutes the extension to the Lorentzian setting of the classification of Riemannian flat tori in the $3$-sphere $\S^3$ by Kitagawa \cite{Kit1}. Let us remark that Theorem \ref{clas2} follows from our main result (Theorem \ref{clas1}) and a reformulation of the proof of Kitagawa's theorem given by Dadok and Sha in \cite{DaSh}.}
\end{remark}

\section*{Hopf cylinders}

The most simple examples of isometric immersions from $\LO^2$ into $\H_1^3$ are provided by Hopf cylinders. Next, we will analyze these Hopf cylinders from the viewpoint developed in this paper.

Let us denote by $\Lambda^2$ the positive light cone $\Lambda^2 =\{x \in \LO^3 : \esiz x,x\esde=0, x_0 >0\}$.

    \begin{defi}Let $\sigma$ be a regular (spacelike or timelike) curve in $\S^2_1$ (resp. $\H^2$, $\Lambda^2$) and $\rho\in\R^4_2$ be pure imaginary and nonzero with $\langle \rho, \rho \rangle = 1$ (resp. $\langle \rho, \rho \rangle = -1$, $\langle \rho, \rho \rangle = 0$). Then the flat surface in $\H^3_1$ given by $M_\rho(\sigma)=h_\rho^{-1} (\sigma)$ is called a \emph{Hopf cylinder}.
    \end{defi}
The Hopf cylinders $M_\rho(\sigma)$ with $\langle \rho, \rho \rangle = -1$ or $\langle \rho, \rho \rangle = 0$ are always timelike, whereas those with $\langle \rho, \rho \rangle = 1$ can be both spacelike or timelike, depending on the causal character of the curve $\sigma$. Moreover, if $\sigma$ is a closed curve in $\H^2$, the Hopf cylinder $M_\rho(\sigma)$ is actually compact, and is called a \emph{Lorentzian Hopf torus}.

Since complete Lorentzian Hopf cylinders are particular cases of isometric immersions of $\LO^2$ into $\H^3_1$, Theorem~\ref{clas1} tells us that they can be obtained from two curves  $\gamma_1, \gamma_2$ with front singularities in $\H^2$. In this situation one may ask whether there exists any condition on the curves $\gamma_i$ which characterizes Lorentzian Hopf cylinders among all isometric immersions of $\LO^2$ into $\H_1^3$.

    \begin{teo}
    Let $f:\LO^2\longrightarrow \H_1^3$ be an isometric immersion which is a Lorentzian Hopf cylinder $M_\rho(\sigma)$. We assume that $f(0,0) = 1$ and $N(0,0) = j$ (this forces $\langle \rho, j\rangle = 0$). Then, $f$ can be recovered following the process described in Theorem~\ref{clas1} from two fronts $\gamma_1$, $\gamma_2$ in $\H^2$ such that at least one of them has constant geodesic curvature $k_i$. Moreover,
        \begin{equation}\label{curvatura_caracter_rho}
        \begin{aligned}
        |k_i| > 1 \quad &\Longleftrightarrow \quad\langle \rho, \rho \rangle = -1\\
        |k_i| = 1 \quad &\Longleftrightarrow \quad\langle \rho, \rho \rangle = 0\\
        |k_i| < 1 \quad &\Longleftrightarrow \quad\langle \rho, \rho \rangle = 1
        \end{aligned}
        \end{equation}
    \end{teo}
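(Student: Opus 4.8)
The plan is to analyze what the Hopf cylinder condition $f = M_\rho(\sigma) = h_\rho^{-1}(\sigma)$ means for the representation $f(u,v) = a_1(u)\overline{a_2(v)}$, and to show it forces one of the curves $a_i$ to have constant angle function $\omega^{a_i}$, hence (by Lemma~\ref{curvatura_es_cot(omega)}) constant geodesic curvature for the associated front $\gamma_i$. First I would recall that the fibers of $h_\rho$ are the curves $t\mapsto z\,e^{t\rho}$, with $e^{t\rho}$ as in \eqref{fibras}, and that $M_\rho(\sigma)$ being a surface fibered over $\sigma$ means that one family of asymptotic curves of $f$ — say the $v$-curves — must be these fibers (up to reparametrization), since the fibers are the only curves contained in $M_\rho(\sigma)$ that are everywhere tangent to the fiber direction, and one checks via \eqref{tel} that the fiber direction is null precisely when $\langle\rho,\rho\rangle = 0$ and in general is an asymptotic direction. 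Concretely, for fixed $u_0$ the curve $v\mapsto f(u_0,v) = a_1(u_0)\overline{a_2(v)}$ should trace (a reparametrization of) a fiber $w\mapsto w\,e^{t\rho}$; taking $u_0 = 0$ this says $\overline{a_2(v)} = e^{t(v)\rho}$ for some function $t(v)$, i.e. $a_2(v) = \overline{e^{t(v)\rho}} = e^{-t(v)\rho}$ (using $\overline{\rho} = -\rho$ for pure imaginary $\rho$, and the form of \eqref{fibras}).

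Next I would compute $\overline{a_2(v)}\,a_2'(v)$ directly from $a_2(v) = e^{-t(v)\rho}$. Differentiating, $a_2'(v) = -t'(v)\,\rho\,e^{-t(v)\rho}$ in the cases $\langle\rho,\rho\rangle = \pm 1$ (using $\frac{d}{dt}e^{t\rho} = \rho\,e^{t\rho}$, which holds because $e^{t\rho}$ is a one-parameter subgroup and $\rho$ commutes with it), so $\overline{a_2(v)}\,a_2'(v) = -t'(v)\,\rho$ up to the sign bookkeeping from $e^{-t\rho}$; in the null case a short direct computation with $e^{t\rho} = 1 + t\rho$ gives the analogous conclusion. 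The point is that $\overline{a_2}\,a_2'$ is a constant multiple (depending on $v$) of the fixed pure imaginary vector $\rho$. Now I impose the normalization $N(0,0) = j$, which forces (as the statement notes) $\langle\rho,j\rangle = 0$, so $\rho = \alpha i + \beta k$ lies in $\mathrm{span}\{i,k\}$. Then Definition~\ref{parametro_asintotico_omega} applies: $\overline{a_2(v)}\,a_2'(v) = \cos(\omega^{a_2}(v))\,i + \sin(\omega^{a_2}(v))\,k$ once $v$ is the asymptotic parameter, and since this vector is always proportional to the \emph{fixed} vector $\rho = \alpha i + \beta k$, the angle $\omega^{a_2}(v)$ is constant (equal to the argument of $(\alpha,\beta)$ or its antipode). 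Hence $\omega_2 = \pi - \omega^{a_2}$ is constant, and by Lemma~\ref{curvatura_es_cot(omega)}, $k_2 = \cot(\omega^{a_2})$ is a constant, so $\gamma_2$ has constant geodesic curvature.

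For the equivalences \eqref{curvatura_caracter_rho}, I would compute $\langle\rho,\rho\rangle$ in terms of $\omega^{a_2}$: since $\rho = \lambda(\cos\omega^{a_2}\, i + \sin\omega^{a_2}\, k)$ for some real $\lambda \neq 0$, and $\langle i,i\rangle = -1$, $\langle k,k\rangle = +1$, $\langle i,k\rangle = 0$, we get $\langle\rho,\rho\rangle = \lambda^2(-\cos^2\omega^{a_2} + \sin^2\omega^{a_2}) = -\lambda^2\cos(2\omega^{a_2})$. Thus the sign of $\langle\rho,\rho\rangle$ is the sign of $-\cos(2\omega^{a_2})$, which is positive, zero, or negative according to whether $|\cot\omega^{a_2}| < 1$, $= 1$, or $> 1$ — that is, according to $|k_2| < 1$, $= 1$, $|k_2| > 1$. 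Matching this against the normalization $\langle\rho,\rho\rangle \in \{1, 0, -1\}$ from the three cases of the Hopf fibration gives exactly \eqref{curvatura_caracter_rho}.

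\textbf{Main obstacle.} The delicate point is the very first step: justifying rigorously that, for a Hopf cylinder, one family of asymptotic curves of $f$ coincides with the Hopf fibers. One must argue that the asymptotic foliations of $f$ are canonically determined by the surface (they are, since asymptotic directions are a second-order invariant and $II$ in \eqref{tel} is nondegenerate off a set), and then that the fibers of $h_\rho$ restricted to $M_\rho(\sigma) = h_\rho^{-1}(\sigma)$ are asymptotic curves — this should follow by computing the second fundamental form of $M_\rho(\sigma)$ in fiber-adapted coordinates and checking the fiber direction is a zero of $II$, which in turn rests on the fact that the Hopf fibration $h_\rho$ is a Riemannian (resp. semi-Riemannian) submersion with totally geodesic fibers whose O'Neill tensor relates the curvatures appropriately; alternatively one can invoke the explicit structure equations of Section~4 with one of $\omega_1,\omega_2$ constant and recognize the resulting surface as a Hopf cylinder, then run the argument in reverse. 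I would present the cleaner direction: show a Hopf cylinder has a constant $\omega_i$ by the fiber argument above, and remark that conversely a constant $\omega_i$ yields, via $a_i(v) = e^{\pm t\rho}$, exactly a piece of a fiber, closing the equivalence.
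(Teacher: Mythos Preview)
Your approach is essentially the same as the paper's: identify the Hopf fiber through the origin with one of the asymptotic curves $a_i$, observe that $\overline{a_i}\,a_i'$ is a scalar multiple of the fixed pure imaginary vector $\rho\in\mathrm{span}\{i,k\}$, conclude that the angle $\omega^{a_i}$ is constant, and then read off \eqref{curvatura_caracter_rho} from $\langle\rho,\rho\rangle = \lambda^2(-\cos^2\omega^{a_i}+\sin^2\omega^{a_i})$.

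The one place where you diverge is precisely the step you flag as the ``main obstacle'', and here you are working much harder than necessary. The paper dispatches it in one line: the fiber $c(t)=e^{t\rho}$ through $1$ is a \emph{geodesic} of $\H^3_1$ (it is a one-parameter subgroup, and the metric is bi-invariant), and any ambient geodesic lying on a surface is automatically an asymptotic curve, since $II(c',c')=-\langle\nabla_{c'}c',N\rangle=0$. There is no need for O'Neill tensors, fiber-adapted second fundamental form computations, or running the structure equations in reverse. Once you know the fiber through $f(0,0)$ is asymptotic, it must be a reparametrization of $a_1$ or $a_2$, and your computation goes through exactly as written. Note also that the paper only uses the single fiber through $1$; you do not need to argue that \emph{every} $v$-curve is a fiber, just the one through the origin.
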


    \begin{proof}
    Let $c(t)$ be the fiber of  $h_\rho$ passing through $1 = f(0,0)$. It is a geodesic of $\H^3_1$ and, hence, an asymptotic curve of the immersion.

    After \eqref{fibras} we know that this curve is given by $c(t) = e^{t\rho}$ and it is easy to check that
        \begin{equation*}
        \overline{c(t)}c'(t) = c'(t)\overline{c(t)} = \rho.
        \end{equation*}
    If we reparametrize this curve by its asymptotic parameter $s$ (see Definition~\ref{parametro_asintotico_omega}), then, for some constant $\omega_0\in\R$, we can write
        \begin{equation}\label{derivada_curva_c}
        \overline{c(s)}c'(s) = c'(s)\overline{c(s)} = \cos(\omega_0) i + \sin(\omega_0) k.
        \end{equation}
    From this expression it is clear that
        \begin{equation}\label{relacion_rho_omega0}
        \rho = \lambda \big(\cos(\omega_0) i + \sin(\omega_0) k\big), \quad\text{with }\lambda>0.
        \end{equation}

     On the other hand, if we consider now the global characteristic parameters $(u,v)$ of the immersion $f$ described in Proposition~\ref{cara}, we can apply Theorem~\ref{representation1} and conclude that
        \begin{equation*}
        f(u,v) = a_1(u) a_2(v)\quad \text{with } a_1 = f(u,0), \hspace{0.5cm} a_2 = f(0,v).
        \end{equation*}
     Using the terminology of Theorem~\ref{clas1}, we get that the immersion $f$ can be recovered from the fronts $\gamma_1 = h(a_1)$ and $\gamma_2 = h(\overline{a_2})$ in $\H^2$.

    The fact that $c(s)$ is an asymptotic curve of $f$ passing through $f(0,0)$ implies that it is a reparametrization of one of the curves $a_i$. In this situation, the corresponding $\gamma_i$ would have constant geodesic curvature if and only if the front $h(c(s))$ (or $h(\overline{c(s)})$) does. But, applying Lemma~\ref{curvatura_es_cot(omega)}, we deduce from \eqref{derivada_curva_c} that the geodesic curvatures of $h(c(s))$ and $h(\overline{c(s)})$ are both given by
        \begin{equation*}
        k_g = \cot(\omega_0).
        \end{equation*}
        
    Finally, if we recall \eqref{relacion_rho_omega0}, we can relate the different possibilities for $\langle\rho,\rho\rangle$ with $|\cot(\omega_0)|$. Namely, we get  
        \begin{equation*}
        \begin{array}{lcccc}
        \langle \rho, \rho \rangle = -1  & \Leftrightarrow & |\cos(\omega_0 ) | > |\sin(\omega_0 )|  & \Leftrightarrow &  |\cot(\omega_0)|>1\\       
        \langle \rho, \rho \rangle = 0  & \Leftrightarrow & |\cos(\omega_0 ) | = |\sin(\omega_0 )|  & \Leftrightarrow &  |\cot(\omega_0)|=1\\       
        \langle \rho, \rho \rangle = 1  & \Leftrightarrow & |\cos(\omega_0 ) | < |\sin(\omega_0 )|  & \Leftrightarrow &  |\cot(\omega_0)| < 1
        \end{array}
        \end{equation*}      
    Therefore, \eqref{curvatura_caracter_rho} is established.
    \end{proof}

\section{The Dajczer-Nomizu questions}

The global study of isometric immersions from $\LO^2$ into $\H_1^3$ was probably initiated by M. Dajczer and K. Nomizu \cite{Dajczer} in 1981. In Theorem 7.6 of that paper, the authors presented a method to construct timelike flat surfaces in $\mathbb{H}^3_1$ by multiplying two regular
curves $b_1(s):\R\flecha \H_1^3$ and $b_2(t):\R\flecha \H_1^3$ of $\H_1^3$ satisfying some additional hypotheses. Translating their notation to our context, these hypotheses on the curves are:
    \begin{itemize}
    \item[i)] $\langle b_1', b_1'\rangle \equiv -1$, $\langle b_2', b_2'\rangle \equiv 1$, i.e. a curve is timelike and the other one is spacelike.
    \item[ii)] $b_1(0) = 1 = b_2(0)$
    \item[iii)] $\langle b_1',b_1\xi_0\rangle \equiv 0
    \equiv \langle b_2',\xi_0 b_2\rangle$
    (we may assume $\xi_0 = j$)
    \end{itemize}
In these conditions, they conclude that the surface $f:\R^2\longrightarrow\mathbb{H}^3_1$ given by
$f(s, t) = b_1(s)b_2(t)$
is a timelike flat surface for some domain $D\subset\mathbb{R}^2$ containing the origin. ($D\subset \R^2$ is the connected component of the origin of all points $(s,t)\in \R^2$ at which $f$ is an immersion).

Moreover, the curves $b_1$ and $b_2$ are the asymptotic curves of $f$.

\vspace*{5mm} After proving this result, they proposed the following global open problems related to the above construction:
    \begin{itemize}
    \item[Q1:] \emph{Is $D = \mathbb{R}^2$ if the curves $b_1$ and $b_2$ are defined on
    all $\mathbb{R}$?}
    \item[Q2:] \emph{If $D = \mathbb{R}^2$, is the surface (geodesically) complete?}
    \item[Q3:] \emph{Can every isometric immersion of \hspace*{3pt}$\mathbb{L}^2$ into \hspace*{1pt}$\mathbb{H}^3_1$
    be obtained as a product of two appropriate curves?}
    \end{itemize}
Taking into account Theorem 7.6 of \cite{Dajczer}, it is reasonable to think that Question 3 was formulated as a problem restricted to spacelike or timelike curves, although this was not explicitly stated there. So, the following problem should also be considered:
    \begin{itemize}
    \item[Q4:]\emph{Can every isometric immersion of \hspace*{3pt}$\mathbb{L}^2$ into \hspace*{1pt}$\mathbb{H}^3_1$ be obtained as a product of two curves so that one of them is
    everywhere timelike and the other is everywhere spacelike?}
    \end{itemize}

 Note that we have already given a positive answer to question
 Q3 in Theorem~\ref{representation1}. Moreover, in Theorem~\ref{clas1} we have seen
 that those two curves $a_1$ and $a_2$ can be obtained as asymptotic lifts of two
 fronts, $\gamma_1$ and $\gamma_2$, in $\mathbb{H}^2$. We know also that, if $k_i$ represents the geodesic curvature of $\gamma_i$ and we take $\omega_1 (u):= \cot^{-1}(k_1)$ and $\omega_2 (v):= \pi - \cot^{-1}(k_2)$, then
    $\langle a_i', a_i' \rangle = - \cos(2\omega_i)$. Therefore, since
    \[
    -\cos(2\omega_i) =
    \dfrac{1 - \cot^2(\omega_i)}{1 + \cot^2 (\omega_i)},
    \]
 we conclude that
    \begin{equation}\label{caracter_causal}
    \begin{array}{rcl}
    a_i'\text{ is timelike}  &\Longleftrightarrow & |k_i|>1\\
    a_i'\text{ is null}  &\Longleftrightarrow & |k_i|= 1\\
    a_i'\text{ is spacelike}  &\Longleftrightarrow & |k_i|<1
    \end{array}
    \end{equation}

    This remark gives us an easy way to find a counterexample to Q4.
We just have to take fronts $\gamma_1$ and
$\gamma_2$ in $\mathbb{H}^2$ verifying the hypotheses of Theorem~\ref{clas1}, but both with $|k_i|>1$ or both
with $|k_i|<1$. In that case, their asymptotic lifts $a_1$ and $a_2$ would generate an isometric immersion of $\LO^2$ into $\mathbb{H}^3_1$ and, according to~\eqref{caracter_causal}, they would have the same causal character.

Theorem~\ref{clas1} is also the key to provide a positive answer to
question Q1. First, let us consider two curves $b_1(s)$ and $b_2(t)$ as in Theorem 7.6 of \cite{Dajczer}. We can reparametrize $b_1$ and $\overline{b_2}$ taking
    \begin{equation*}
    a_1(u) = b_1(s(u)), \qquad a_2(v) = \overline{b_2(t(v))}
    \end{equation*}
with $u, v$ the asymptotic parameters according to
Definition~\ref{parametro_asintotico_omega}. In this way, we obtain two
curves verifying $a_1(0) = a_2(0)=1$, $\langle a_1',a_1 j\rangle =\langle a_2',a_2 j\rangle=0$ and so that $a_1$ is everywhere timelike
and $a_2$ is everywhere spacelike. 

Now, consider the fronts $\gamma_1 = h(a_1)$, $\gamma_2 = h(a_2)$ in $\H^2$ (see Lemma~\ref{lemgor}). Then, by changing the order of $\gamma_1$ and $\gamma_2$ if necessary (which simply means conjugation in the product $a_1(u)\overline{a_2(v)}$), we see that $\{\gamma_1,\gamma_2\}$ is an admissible front pair. Therefore, the Lorentzian flat surface (not necessarily complete) obtained
via Theorem~\ref{clas1} has no singular points. That is,
the immersion
    \[
    f(s,t) = b_1(s) b_2(t) = a_1(u(s)) \overline{a_2(v(t))}
    \]
is defined over all $\mathbb{R}^2$. Thus, we have answered affirmatively question Q1.

Finally, the following theorem shows that question Q2 has, in general, a negative answer. Besides, it also shows that is still possible to give some sufficient conditions in the sense of \cite{Cec} and \cite{Sas} to ensure completeness in the Lorentzian case. This was exactly the way the problem was formulated in \cite{Dajczer}, where it is claimed: \emph{This [question Q2] seems to be a much more difficult problem than the question of completeness of flat surfaces in $\S^3$ treated in \cite{Cec} and \cite{Sas}}.

\begin{teo}
Let $b_1,b_2:\R\flecha \H_1^3$ be two regular curves with $b_1(0)=b_2(0)=1$, and such that $-\esiz b_1',b_1'\esde = \esiz b_2',b_2'\esde =1$ and $\esiz b_1',b_1 j \esde = \esiz b_2',b_2 j\esde =0$. Consider the timelike flat surface $$f(s,t)= b_1(s) \, \overline{b_2(t)} :\R^2\flecha \H_1^3,$$ which has no singular points by the above explanation. Then:
 \begin{enumerate}
 \item
The asymptotic parameters $(u,v)$ of $f$ are globally defined on $\R^2$.
 \item
The surface $f$ is geodesically complete if the angle function $\omega(u,v)=\omega_1(u) + \omega_2(v)$ associated to the asymptotic parameters $(u,v)$ verifies $0<c\leq \sin \omega(u,v)$ for some $c>0$.
 \item
There exist curves $b_1,b_2$ as before so that the resulting timelike flat surface is not geodesically complete.
 \end{enumerate}
\end{teo}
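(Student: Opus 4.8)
\emph{Item 1.} The plan is to reparametrize $b_{1}$ and $b_{2}$ by their asymptotic parameters and to show these reparametrizations are global diffeomorphisms of $\R$. Since $\langle b_{1}',b_{1}j\rangle\equiv 0$ we may write $\overline{b_{1}}\,b_{1}'=\lambda i+\mu k$; bi-invariance of the metric gives $\langle b_{1}',b_{1}'\rangle=-\lambda^{2}+\mu^{2}=-1$, hence $\lambda^{2}+\mu^{2}=1+2\mu^{2}\ge 1$. So, if $s=s(u)$ is the change to the asymptotic parameter $u$ of Definition~\ref{parametro_asintotico_omega}, then $u'(s)=\sqrt{\lambda(s)^{2}+\mu(s)^{2}}\ge 1$, whence $|u(s)|\ge|s|$ and $u\colon\R\to\R$ is an increasing diffeomorphism; the identical argument applies to $b_{2}$ using $\langle b_{2}',b_{2}'\rangle=1$. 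Thus $a_{1}(u):=b_{1}(s(u))$ and $a_{2}(v):=b_{2}(t(v))$ are defined on all of $\R$, so $f=a_{1}(u)\overline{a_{2}(v)}$ is parametrized on all of $\R^{2}$ by the (global) asymptotic parameters $(u,v)$, and the fundamental forms take the form \eqref{tel} of Proposition~\ref{cara} for the associated $\omega_{1}=\omega^{a_{1}}$, $\omega_{2}=\pi-\omega^{a_{2}}$.

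\emph{Item 2.} With $(u,v)$ as above and $\omega=\omega_{1}+\omega_{2}$, the hypothesis $\sin\omega\ge c>0$ gives $|\cos\omega|\le\sqrt{1-c^{2}}<1$ everywhere, so the Riemannian metric $\tilde I=du^{2}+2\cos\omega\,du\,dv+dv^{2}$ of \eqref{eccco} satisfies $\tilde I\ge(1-\sqrt{1-c^{2}})(du^{2}+dv^{2})$ on $\R^{2}$ and hence is complete. By Remark~\ref{remcom} the map $(x(u,v),y(u,v))$ of \eqref{camcor} is then a global diffeomorphism of $\R^{2}$, so by Proposition~\ref{cara} $f$ is an isometric immersion of $\LO^{2}$ into $\H_1^3$: in the global coordinates $(x,y)$ its first fundamental form is $-dx^{2}+dy^{2}$, the complete flat Lorentzian metric. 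Therefore $f$ is geodesically complete.

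\emph{Item 3.} The plan is to construct an explicit example carrying an inextendible null geodesic of bounded affine length. In the null coordinates $p=y-x$, $q=y+x$, equations \eqref{sispa2} give $I=dp\,dq$, so each level curve $\{q=\mathrm{const}\}$ is a null geodesic affinely parametrized by $p$, with $p_{u}=\sin\omega_{1}-\cos\omega_{1}$, $p_{v}=-\sin\omega_{2}-\cos\omega_{2}$, $q_{u}=\sin\omega_{1}+\cos\omega_{1}$, $q_{v}=\cos\omega_{2}-\sin\omega_{2}$. Fix a smooth $g\colon\R\to(0,\pi/4)$ with $g(r)\to 0$ as $r\to\pm\infty$, $\int_{\R}g<\infty$ but $\int_{\R}\sqrt{g}=\infty$ (for instance $g(r)=\tfrac{1}{2(1+r^{2})}$), and set $\omega_{1}(r)=\tfrac{\pi}{4}-g(r)$, $\omega_{2}(r)=\tfrac{3\pi}{4}-g(r)$, so that $\omega_{1}+\omega_{2}=\pi-2g\in(0,\pi)$, $\cos(2\omega_{1})=\sin(2g)>0$ and $\cos(2\omega_{2})=-\sin(2g)<0$. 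The curves $a_{1},a_{2}$ with $\overline{a_{i}}\,a_{i}'=\cos\omega^{a_{i}}\,i+\sin\omega^{a_{i}}\,k$, $\omega^{a_{1}}=\omega_{1}$, $\pi-\omega^{a_{2}}=\omega_{2}$, $a_{i}(0)=1$, exist globally (the ODE is linear on the group $\H_1^3$) and are timelike, resp.\ spacelike; moreover $\int_{0}^{\pm\infty}\sqrt{\cos 2\omega_{i}}=\pm\infty$ guarantees that their arclength reparametrizations $b_{1},b_{2}\colon\R\to\H_1^3$ (with $b_{i}(0)=1$, $\langle b_{i}',b_{i}j\rangle=0$, $-\langle b_{1}',b_{1}'\rangle=\langle b_{2}',b_{2}'\rangle=1$) are still defined on all of $\R$. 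A short computation shows $q$ is constant ($\equiv q(0,0)=0$) along the diagonal $c(t)=(t,t)$, which therefore lies on the null geodesic $\{q=0\}$; since $Q_{1}(u):=\int_{0}^{u}q_{u}$ is an increasing and $Q_{2}(v):=\int_{0}^{v}q_{v}$ a decreasing diffeomorphism of $\R$, this level set is exactly $\{c(t):t\in\R\}$. Along $c$ the affine parameter is $p(c(t))=\int_{0}^{t}\tfrac{d}{dr}p(r,r)\,dr=-2\sqrt{2}\int_{0}^{t}\sin g(r)\,dr$, which converges as $t\to\pm\infty$ because $g$ is integrable; hence $p$ ranges over a bounded interval while $c(t)\to(\pm\infty,\pm\infty)$ leaves every compact subset of $\R^{2}$, so this geodesic cannot be extended to the endpoints of its affine interval. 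Thus $f=b_{1}\overline{b_{2}}$ is not geodesically complete. (Equivalently, along $c$ the metric $\tilde I$ has finite length on a divergent curve, so $\tilde I$ is incomplete, whence $f$ is incomplete by Remark~\ref{remcom} together with the uniqueness of the simply connected complete flat Lorentzian surface.)

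\emph{Main obstacle.} Items~1 and~2 are routine; the difficulty is concentrated in Item~3. One must choose $\omega_{1},\omega_{2}$ so that the unit-speed curves $b_{i}$ are defined on all of $\R$ — which forces $\int\sqrt{\cos 2\omega_{i}}$ to diverge, i.e.\ $\omega_{i}$ tends to the critical values $\tfrac{\pi}{4},\tfrac{3\pi}{4}$ only slowly — while the surface is incomplete — which needs $\int|\tfrac{\pi}{4}-\omega_{i}|<\infty$, i.e.\ $\omega_{i}$ tends to them fast enough. These requirements pull in opposite directions but are simultaneously satisfiable (a decay rate $\sim r^{-2}$ works), and the real work is to verify that the diagonal is a genuine inextendible geodesic of bounded affine length, not merely a curve along which the metric degenerates.
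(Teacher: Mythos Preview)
Your proof is correct and follows essentially the same path as the paper. Items~1 and~2 are argued identically (your inequality $\lambda^{2}+\mu^{2}\ge 1$ is the paper's $\cosh^{2}\theta+\sinh^{2}\theta\ge 1$ in different notation). For Item~3 your construction is exactly the paper's: writing $\omega_{1}=\tfrac{\pi}{4}-g$ you have $\omega_{2}=\omega_{1}+\tfrac{\pi}{2}$, and your conditions $\int g<\infty$, $\int\sqrt{g}=\infty$ are precisely the paper's $\int\cos(2\omega_{1})<\infty$, $\int\sqrt{\cos(2\omega_{1})}=\infty$. The only genuine difference is the incompleteness argument: the paper shows the diagonal has finite length for the auxiliary Riemannian metric $\tilde I$ and then invokes Remark~\ref{remcom}, whereas you observe directly that the diagonal is the null geodesic $\{q=0\}$ with bounded affine parameter $p$. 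Your route is a bit more intrinsic (it exhibits the incomplete geodesic explicitly rather than passing through $\tilde I$), but the underlying computation is the same, and you note the equivalence yourself in the final parenthetical.
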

\begin{proof}
Let us first prove that the parameters $(u,v)$ are globally defined on $\R^2$. We shall only prove that $u=u(s)$ is globally defined on $\R$ (the case of $v=v(t)$ is analogous). As $\langle b_1'(s), b_1(s) j\rangle \equiv 0$ and $\langle b_1'(s), b_1'(s) \rangle \equiv -1$,
we can write
    \begin{equation*}
    \overline{b(s)} b'(s) \hspace*{3pt} = \hspace*{3pt}\pm \hspace*{3pt}\cosh\hspace*{-1pt} \big(\theta(s)\big)\hspace*{1pt} i
    \hspace*{3pt}\pm\hspace*{3pt}\sinh\hspace*{-1pt} \big(\theta(s)\big)\hspace*{1pt}
    k \quad\text{for some }\theta\in\mathcal{C}^\infty(\R).
    \end{equation*}
So, the asymptotic parameter of $b_1$ is given by
    \begin{equation*}
    u(s) = \Large{\int_0^s} \sqrt{\cosh^2(\theta(r)) + \sinh^2(\theta(r))}
    \hspace*{2pt}dr.
    \end{equation*}
Hence,
    \begin{equation*}
    |u(s)| = \left|\Large{\int_0^s} \sqrt{\cosh^2(\theta(r)) + \sinh^2(\theta(r))}
    \hspace*{2pt}dr\right| \geq \left|\Large{\int_0^s} 1
    \hspace*{2pt}dr\right| = |s|.
    \end{equation*}
As $s$ is globally defined on $\R$, so is $u$. This proves the first claim. Besides, the second claim follows directly from Proposition \ref{cara}.

Finally, to prove the third claim we need to find a timelike flat surface $$f(u,v):\R^2\flecha \H_1^3$$ with globally defined asymptotic coordinates $(u,v)$, such that $\esiz f_u,f_u\esde = -1$, $\esiz f_v,f_v\esde =1$, the curves $f(u,0)$ and $f(0,v)$ are globally defined on $\R$ when parametrized by arc-length, but such that the surface is not geodesically complete.

For that, let us consider a smooth function $\omega_1(u):\R\flecha (0,\pi /4)$ verifying:
 \begin{itemize}
 \item
$\displaystyle \int_0^{\8} \sqrt{\cos (2\omega_1 (u))} du = \8$ , $\displaystyle\int_{-\8}^0 \sqrt{\cos (2\omega_1 (u))} du = \8$.
 \item
$\displaystyle\int_0^{\8} \cos (2\omega_1 (u)) du < \8$.
 \end{itemize}
Define now $\omega_2 := \pi /2 + \omega_1 :\R\flecha (\pi/2, 3\pi /4)$. By Proposition \ref{cara}, $\omega_1$ and $\omega_2$ define a timelike flat surface $f(u,v):\R^2\flecha \H_1^3$ with globally defined asymptotic parameters. Besides, by \eqref{tel}, the curves $f(u,0)$ and $f(0,v)$ are globally defined on $\R$ when parametrized by arc-length. Now, to prove that the surface is not geodesically complete, we need to ensure that the map $(x(u,v),y(u,v))$ in \eqref{camcor} is not a global diffeomorphism of $\R^2$. But by Remark \ref{remcom}, we just need to prove that the Riemannian flat metric $\tilde{I} := dx^2 + dy^2$ is non-complete. 

Now, consider the divergent line $\gamma(t)=(t,t):[0,\8)\flecha \R^2$ in the $u,v$-plane. Then, by \eqref{camcor}, we have $$\tilde{I}(\gamma'(u),\gamma'(u))= 2 (1- \sin(2 \omega_1 (u))).$$ So, noting that $$ \sqrt{1- \sin (2\omega_1 )} = \frac{\cos (2\omega_1)}{\sqrt{1+\sin (2\omega_1)}},$$ we have by the condition imposed to $\omega_1$ from the start that $$\int_0^{\8} \sqrt{\tilde{I} (\gamma',\gamma')} du < \8,$$ i.e. $\gamma$
is a divergent curve of finite length. Thus, the map \eqref{camcor} is not a global diffeomorphism, and the timelike flat surface $f(u,v)$ is not (geodesically) complete.
\end{proof}

\end{document}